\newtheorem{thm}{Theorem}
\newtheorem{prop}[thm]{Proposition}
\newtheorem{theorem}[thm]{Theorem}
\newtheorem{lemma}[thm]{Lemma}
\newtheorem{corollary}[thm]{Corollary}
\newtheorem{proposition}[thm]{Proposition}
\theoremstyle{definition}
\newtheorem*{definition*}{Definition}
\newtheorem{definition}[thm]{Definition}
\newtheorem{remark}[thm]{Remark}
\newtheorem{example}[thm]{Example}
\newcommand{\CPb}{\overline{\mathbb{CP}}{}^{2}}
\newcommand{\CP}{{\mathbb{CP}}{}^{2}}
\newcommand{\CPo}{{\mathbb{CP}}{}^{1}}
\newcommand{\RP}{{\mathbb{RP}}{}^{2}}
\newcommand{\R}{\mathbb{R}}
\newcommand{\Q}{\mathbb{Q}}
\newcommand{\Z}{\mathbb{Z}}
\newcommand{\twprod}{\mathbin{\mathchoice%
		{\ooalign{\raise1.15ex\hbox{$\scriptstyle\sim$}\cr\hidewidth$\times$\hidewidth\cr}}%
		{\ooalign{\raise1.15ex\hbox{$\scriptstyle\sim$}\cr\hidewidth$\times$\hidewidth\cr}}%
		{\ooalign{\raise.85ex\hbox{$\scriptscriptstyle\sim$}\cr\hidewidth$\scriptstyle\times$\hidewidth\cr}}%
		{\ooalign{\raise.65ex\hbox{$\scriptscriptstyle\sim$}\cr\hidewidth$\scriptscriptstyle\times$\hidewidth\cr}}%
}}
\newcommand{\M}{\operatorname{Mod}}
\def \x {\times}
\newcommand{\ZZ}{\mathbb{Z}}
\newcommand{\RR}{\mathbb{R}}
\newcommand{\PP}{\mathbb{P}}
\newcommand{\Diff}{\operatorname{Diff}}
\newcommand{\inv}{^{-1}}
\newcommand{\Mod}{\operatorname{Mod}}
\newcommand{\lan}{\langle}
\newcommand{\ran}{\rangle}
\newcommand{\twist}{\Tilde{\times}}
\renewcommand{\M}{\operatorname{Mod}}
\newcommand{\D}{\operatorname{Diff}}
\begin{document}
	
	\title[On nonorientable  $4$--manifolds]
	{On nonorientable  $4$--manifolds}

	\author[R. \.{I}. Baykur]{R. \.{I}nan\c{c} Baykur}
	\address{Department of Mathematics and Statistics, University of Massachusetts, Amherst, MA 01003-9305, USA}
	\email{inanc.baykur@umass.edu}
	
	\author[P. Morgan]{Porter Morgan}
	\address{Department of Mathematics and Statistics, University of Massachusetts, Amherst, MA 01003-9305, USA}
	\email{pamorgan@umass.edu}

	\begin{abstract}
		We present several structural results on closed, nonorientable, smooth $4$--manifolds, extending analogous results and machinery for the orientable case. We prove the existence of simplified broken Lefschetz fibrations and simplified trisections on nonorientable $4$--manifolds, yielding descriptions of them via factorizations in mapping class groups of nonorientable surfaces. With these tools in hand, we classify low genera simplified broken Lefschetz fibrations on nonorientable $4$--manifolds. We also establish that every closed, smooth $4$--manifold is obtained by surgery along a link of tori in a connected sum of copies of $\CP$, $S^1 \x S^3$, $S^2\times \mathbb{R}\mathbb{P}^2,$ and $\mathbb{R}\mathbb{P}^4$. Our proofs make use of topological modifications of singularities, handlebody decompositions, and mapping classes of surfaces.
	\end{abstract}

	\maketitle
	
	\setcounter{secnumdepth}{2}
	\setcounter{section}{0}
	
	\tableofcontents
	
	\section{Introduction}
	
	The goal of this article is to extend some tools and results pertaining to orientable \linebreak \mbox{$4$--manifolds} over to nonorientable ones. We work in the smooth category. Below, we present four theorems.
	
	Our first result concerns \emph{simplified broken Lefschetz fibrations} (SBLF), which are certain generalizations of Lefschetz fibrations that are known to be supported on all closed, connected, orientable $4$--manifolds \cite{ADK, baykurPJM, Gay-Kirby:BLF, Lekili, Baykur-Saeki:TAMS}. 
	
	\begin{theorem} \label{thm:SBLFexistence}
		Every closed, connected, nonorientable $4$--manifold $X$ admits a simplified broken Lefschetz fibration that can be obtained by homotoping a given generic map $X \to S^2$.
	\end{theorem}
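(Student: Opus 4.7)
The plan is to adapt, to the nonorientable setting, the chain of homotopies used in the orientable case by Gay--Kirby, Lekili, and Baykur--Saeki. Starting from a generic map $f\colon X \to S^2$, by the jet-transversality theorem of Thom and Levine, after a small perturbation we may assume that the only singularities of $f$ are folds (definite and indefinite) and cusps, and that the image of the critical set in $S^2$ is an immersed $1$-complex with transverse double points and cusp points. Since each of these normal forms is modeled on a neighborhood in $\R^4$, the classification is insensitive to whether $X$ is orientable, so this gives the same starting position as in the orientable case.

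From there, I would carry out three reduction steps. First, I would eliminate cusps and definite folds using Lekili's wrinkling and unsinking moves, trading them for indefinite folds together with Lefschetz-type singularities; the local models involved are Euclidean and so the moves apply without change on nonorientable $X$. After this step, $f$ is homotopic to a broken Lefschetz fibration. Second, I would apply fold-surgery moves (flip-and-slip and base-diagram simplifications, in the style of Baykur--Saeki) to arrange the indefinite fold locus into a single circle in $S^2$, separating the base into two disks. Third, I would push all Lefschetz singularities onto the higher-genus side by the usual local argument. The output is an SBLF homotopic to $f$, as desired.

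The principal obstacle will be checking that each of these moves remains valid when the regular fibers of $f$ may be nonorientable. The monodromy of the fiber bundle over the complement of the critical set takes values in the mapping class group of a (possibly nonorientable) surface, and one must track how the homeomorphism type of the fiber, including its orientability, can change as one crosses an indefinite fold arc or traverses a loop of regular values in the base. This bookkeeping is cleanest if performed through the handle decompositions attached to each fold arc and each Lefschetz singularity, for which both orientable and nonorientable local models are available. Once this is installed, the base-diagram moves used in the orientable argument transfer essentially verbatim, yielding the theorem.
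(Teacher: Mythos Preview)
Your overall strategy matches the paper's: adapt the Baykur--Saeki sequence of homotopies, relying on the fact that the local models for folds, cusps, and Lefschetz points are orientation-insensitive. A few points need correction, however. The elimination of \emph{definite} folds is not accomplished by Lekili's moves (wrinkling and unsinking relate cusps and Lefschetz singularities only); it is a separate argument due to Saeki, which first isolates each definite circle via base-diagram isotopies and then applies a local homotopy to trade it for an indefinite one. Your ordering is also inverted relative to the paper: the paper retains cusps through the process precisely because \emph{cusp merges} are the mechanism for joining distinct indefinite fold circles into one, and only at the end are the remaining cusps unsunk into Lefschetz points. Finally, you omit two of the four SBLF conditions: one must also make the fibration \emph{directed} (fiber Euler characteristic monotone along radial paths in the base disk) before merging, and one must ensure all fibers are \emph{connected}, which the paper achieves via repeated flip-and-slip moves. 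None of these gaps is fatal---each is repaired by invoking the correct move from the Baykur--Saeki toolkit---but your sketch as written would not assemble into a proof without them.
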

	
	In the nonorientable setting, the local models around the Lefschetz critical points are necessarily unoriented, as in \cite{Miller-Ozbagci, Onaran-Ozbagci}; in this sense, these singularities are \emph{achiral}. In Proposition~\ref{prop:notall} we demonstrate that not every nonorientable $4$--manifold $X$ admits a Lefschez fibration or pencil; one needs to allow indefinite folds for a general existence result, that is, consider \emph{broken} Lefschetz fibrations instead, as we did here.
	
	The homotopy in the theorem can be obtained by an explicit algorithm adapted from Baykur--Saeki's work \cite{Baykur-Saeki:TAMS}; also see~\cite{saeki:simplifying-general} for a similar result in any \mbox{dimension $\geq 4$.} A  consequence of Theorem~\ref{thm:SBLFexistence} is that any closed, nonorientable $4$-manifold can be prescribed by an ordered tuple of curves on a nonorientable surface, subject to an easy-to-check condition; see Corollary~\ref{cor:combinatorial}
	
	Our second result concerns \emph{simplified trisections}, which are special Gay-Kirby trisections \cite{Gay-Kirby:trisection} of a $4$--manifold where the corresponding trisection map has an embedded singular image \cite{Baykur-Saeki:TAMS, Baykur-Saeki:PNAS, Hayano:STS}. Another algorithm we adapt from \cite{Baykur-Saeki:TAMS} yields:
	
	\begin{theorem} \label{thm:STexistence}
		Every closed, connected, nonorientable $4$--manifold admits a simplified trisection. 
	\end{theorem}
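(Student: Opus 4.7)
The plan is to start with a simplified broken Lefschetz fibration $f\colon X \to S^2$ provided by Theorem~\ref{thm:SBLFexistence}, and further homotope $f$ until it becomes a trisection map whose singular image is an embedded collection of curves in $S^2$. Since an SBLF already has a very restricted singular image (one circle of indefinite folds together with Lefschetz critical points lying in the higher-genus region), this gives a strong starting point for producing a simplified trisection.

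Following the template of Baykur--Saeki's proof in the orientable case \cite{Baykur-Saeki:TAMS, Baykur-Saeki:PNAS}, I would first eliminate the achiral Lefschetz critical points of $f$ by the standard flip/swallowtail move, replacing each such point by a pair of indefinite fold arcs joined through cusps; after this step only fold and cusp singularities remain. Next I would apply a sequence of fold-crossing, cusp-merging, and fold-sliding moves to arrange the singular image as a family of nested simple closed curves in $S^2$, allowing a partition of $S^2$ into three disks $D_1, D_2, D_3$ whose common boundaries realize the arranged singular image. Because every move used is supported in a small ball in $X$ and a small disk in $S^2$, none of them is sensitive to global orientability, so the algorithm transfers essentially verbatim from the orientable setting.

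The step I expect to be the main obstacle is the final verification that each piece $X_i = f^{-1}(\overline{D_i})$ has the homeomorphism type of a genus-$g$ nonorientable trisection sector, i.e., a boundary connected sum of copies of $S^1 \x D^3$ and $S^1 \twprod D^3$. For this, I would read off a handle decomposition of $X_i$ from the folds, cusps, and fiber type over $D_i$, and then identify the handle attachments in terms of the mapping class group data of the fibration on nonorientable surfaces developed alongside Theorem~\ref{thm:SBLFexistence}. Nonorientable fibers will introduce the twisted summand $S^1 \twprod D^3$, while orientable fibers contribute the untwisted $S^1 \x D^3$ factors, and tracking which summands arise for each $X_i$ requires care in following the effect of the homotopies above, especially when a flip move converts an achiral Lefschetz point into folds that cross an orientation-reversing loop in the base. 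Once this bookkeeping is carried out, the resulting decomposition is a simplified trisection by construction, proving Theorem~\ref{thm:STexistence}.
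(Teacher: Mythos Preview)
Your proposal has the right starting point (an SBLF from Theorem~\ref{thm:SBLFexistence}) but misses a structural feature of trisection maps and misidentifies where the real work lies.

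A trisection map has target a disk in $\mathbb{R}^2$, not $S^2$: its outermost singular circle is a \emph{definite} fold bounding the image, and the three sectors $X_i$ are preimages of three wedge-shaped regions of this disk (see Figure~\ref{fig:morse2_fn}), not of three disks partitioning $S^2$. Your proposal never explains how to pass from $f\colon X\to S^2$ to a map whose image is a disk bounded by a definite fold. In the paper this is the heart of the construction: one opens up the lower-genus side $X_-\cong F_-\times D^2$ and uses an ordinary Morse function $h\colon F_-\to[1,2]$ to build an explicit map on $X\setminus(X_+\cup X_0\cup\nu F_-)$ to an annulus, producing the required outermost definite fold together with a stack of outward-directed indefinite folds from the index-$1$ critical points of $h$. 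The local moves you list (wrinkling, fold-crossing, cusp-merging, fold-sliding) never create definite folds, so they cannot accomplish this step.

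Conversely, what you flag as the main obstacle---verifying that each $X_i$ is the correct handlebody---is automatic once the base diagram has the form of Figure~\ref{fig:morse2_fn}. As explained just before the proof, the central fiber $S$ is common to all three $3$--dimensional handlebodies $X_i\cap X_j$, so they are all of the same type; since $X$ is nonorientable one deduces $S=N_{2g}$, hence every $X_i\cap X_j\cong\natural_g S^1\twprod D^2$ and every $X_i\cong\natural_k S^1\twprod D^3$. There is no mix of twisted and untwisted summands to track (and no ``orientation-reversing loop in the base'', since the base is $\mathbb{R}^2$). The bookkeeping you anticipate is unnecessary; the genuine content is the explicit construction of the map, which your outline does not supply.
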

	
	In particular, we obtain an alternate proof of the existence of trisections on nonorientable $4$--manifolds, shown earlier in \cite{Rubinstein-Tillman, Miller-Naylor}. Notably, the stable equivalence of trisection decompositions also holds for the \emph{simplified} trisection decompositions of nonorientable $4$--manifolds; cf.~\cite[Remark~7.8]{Baykur-Saeki:TAMS}. We also give several examples where our methods produce \emph{minimal genus} trisections. 
	
	\smallskip
	Our third result is a nonorientable analog of the classification of low genera fibrations in the orientable case \cite{Baykur_Kamada, hayano:genus1, hayano:genus1-II, Miller-Ozbagci}. The genus of the SBLF is defined as the highest genus among all fibers, which, in the nonorientable case, turns out to be the genus of a nonorientable surface $N_k:= \#_k \, \mathbb{RP}^2$. If a broken Lefschetz fibration has a nodal fiber containing $S^2$ or $\RP$, then we can remove the corresponding singularity, without destroying the rest of the fibration, by the usual blow-down operation or the \emph{nonorientable blow-down}. Indeed, any broken Lefschetz fibration is a blow-up of a \emph{relatively minimal} fibration, which by definition does not contain such nodes; see Remark~\ref{rk:relminimal}. Therefore, it suffices to classify relatively minimal fibrations. 
	
	There is little to say about nonorientable genus $k < 2$ fibrations (see Remark~\ref{rmk:low_genus_2}), whereas the classification in the case $k = 2$ turns out to be fairly rich:
	
	\begin{theorem}
		\label{thm:g2_classification}
		Let $X$ be a closed, connected, nonorientable $4$--manifold, admitting a relatively minimal simplified broken Lefschetz fibration $f\colon X \to S^2$ of \emph{nonorientable genus two}. Then the diffeomorphism type of $X$ is one of the following:
		\[
		X \cong
		\left\{
		\begin{array}{ll}
			\quad K_n, \hfill
			& \quad \text{if } C_f = \emptyset,\ Z_f = \emptyset, \\[0.5ex]
			
			\quad N_n \text{ or } N'_n 
			& \quad \text{if } C_f = \emptyset,\ Z_f \neq \emptyset, \\[0.5ex]
			
			\quad M_{m,n} 
			& \quad \text{if } C_f \neq \emptyset,\ Z_f = \emptyset.
		\end{array}
		\right.
		\]     
		for some $n \in \Z_{\geq 0}$, $m \in \Z_{>0}$, where $C_f$ and $Z_f$ are the set of Lefschetz and fold singularities of $f$, respectively. 
	\end{theorem}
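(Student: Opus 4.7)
The plan is to leverage the combinatorial description of SBLFs from Corollary~\ref{cor:combinatorial}, which encodes $f$ by an ordered tuple of curves on the reference fiber $N_2$, and perform a case analysis along the three alternatives listed in the statement. The key simplifying feature is that $N_2$ is the Klein bottle: its mapping class group is finite (a copy of $(\Z/2)^{\oplus 2}$ by classical work of Lickorish--Chillingworth), and its essential simple closed curves fall into only a handful of isotopy classes (two-sided non-separating; one-sided; boundary-parallel or null-homotopic). This smallness should render the case analysis fully explicit.

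In Case (i), with $C_f=Z_f=\emptyset$, the map $f$ is a genuine $N_2$-bundle over $S^2$, determined by its clutching map along the equator. Standard bundle classification then produces the family $K_n$, indexed by an integer $n\geq 0$ that records the bundle twist. In Case (ii), with $C_f=\emptyset$ and $Z_f\neq\emptyset$, a single round fold circle separates $S^2$ into two disks, with Klein-bottle fibers over one side and a lower-genus fiber over the other. The two possible isotopy types of the vanishing cycle at the fold---two-sided (yielding $S^2$ on the small side) versus one-sided (yielding $\RP$)---together with a global monodromy twist account for exactly the two families $N_n$ and $N'_n$.

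Case (iii), with $C_f\neq\emptyset$ and $Z_f=\emptyset$, is the richest: $f$ is an achiral Lefschetz fibration with $m\geq 1$ critical points, and relative minimality forces each vanishing cycle to be non-separating (else a sphere or projective plane component in the singular fiber would contradict minimality). I would normalize the monodromy factorization in $\Mod(N_2)$ using Hurwitz moves and global conjugation, reducing every factorization to a standard form with at most one degree of freedom beyond the count $m$, and then reconstruct $X$ from this standard form by explicit handle attachments along the vanishing cycles, identifying the result with $M_{m,n}$ for some $n\geq 0$.

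The principal obstacle will be Case (iii): showing that every admissible factorization produces a manifold in the family $\{M_{m,n}\}$, and that the pair $(m,n)$ is a well-defined diffeomorphism invariant of $f$. Although the finiteness of $\Mod(N_2)$ constrains the factorizations drastically, tracking how the global ``twist'' parameter $n$ is read off the factorization---as distinguished from the number $m$ of Lefschetz factors---will require careful handle calculus, likely together with invoking the nonorientable blow-down to ensure relative minimality is preserved throughout each normalization step.
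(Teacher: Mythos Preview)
Your overall case split and the treatment of Case~(i) are correct and match the paper. Case~(iii) is also on the right track, though simpler than you anticipate: on $Kb$ there is, up to isotopy, a \emph{unique} essential two-sided simple closed curve $\gamma$, so every relatively minimal Lefschetz vanishing cycle is already $\gamma$ and no Hurwitz normalization is needed. The monodromy factorization is automatically $t_\gamma^{2m}$ (the exponent must be even for the product to be trivial in $\Mod(Kb)$), and the parameter $n$ enters only through the final gluing of $Kb\times D^2$ by an element $\phi^n\in\pi_1(\Diff(Kb))\cong\Z$, not through the factorization itself.

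Case~(ii), however, contains a genuine error. The vanishing cycle of an indefinite fold is always a \emph{two-sided} curve on the fiber (the round $2$--handle's $2$--handle is attached with a well-defined zero framing; see Section~\ref{sec:SBLF_topology}), so your proposed dichotomy ``two-sided versus one-sided'' does not arise. Since the fold curve must moreover be nonseparating (to keep fibers connected), it is isotopic to the unique such curve on $Kb$, and the lower-genus fiber is always $S^2$, never $\RP$. The two genuine sources of parameters are: (a) the gluing of the round $2$--handle cobordism to $Kb\times D^2$, governed by $\pi_1(\Diff(Kb))\cong\Z$, which produces the integer $n$; and (b) the gluing of $S^2\times D^2$ along $\partial_-H\cong S^1\times S^2$, governed by the two fiber-preserving self-diffeomorphisms of $S^1\times S^2$, which manifests as an even/odd framing on the final $2$--handle and distinguishes $N_n$ from $N'_n$. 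You should rewrite Case~(ii) around these two gluing choices rather than around the isotopy type of the fold curve.
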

	
	The manifolds $K_n$, which are Klein bottle bundles over $S^2$, and
	$N_n$, $N'_n$, and $M_{m,n}$ above are explicitly defined through Kirby diagrams in Section~\ref{sec:classification}. Some specific diffeomorphism types that arise are $K_0\cong Kb\times S^2$, $N_0\cong S^1\twist S^3\# S^2\times S^2$, and $N'_0\cong S^1\twist S^3\# 2\CP$. Note that the case when both $C_f$ and $Z_f$ are nonempty arises only when $f$ is not relatively minimal. Using the algorithm we employed in the proof of Theorem~\ref{thm:STexistence}, we moreover derive new low genera (simplified) trisections on some of these $4$--manifolds, extending the conjectural list of genus--$3$ trisections with nonorientable examples; see Remark~\ref{rk:genus3}.
	
	Our last theorem concerns \emph{torus surgeries}, which are $4$-dimensional analogs of Dehn surgeries, performed by cutting out and gluing back in trivial neighborhoods of tori.
	
	\begin{theorem} \label{thm:surgery}
		Every closed, connected, nonorientable $4$--manifold can be obtained by surgery along a link of tori in $Z=a \, (S^2\times \mathbb{R}\mathbb{P}^2) \# b \, \mathbb{R}\mathbb{P}^4 \# c \, \mathbb{C}\mathbb{P}^2\#d \, (S^1\times S^3)$, for some $a, b, c, d \in \Z_{\geq 0}$.
	\end{theorem}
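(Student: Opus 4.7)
The plan is to use the simplified broken Lefschetz fibration provided by Theorem~\ref{thm:SBLFexistence} as a template for $X$, and then to trade each nontrivial feature of this fibration for a standard piece via torus surgeries, following the spirit of the orientable analog.

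First I would invoke Theorem~\ref{thm:SBLFexistence} to obtain an SBLF $f \colon X \to S^2$ with Lefschetz set $C_f$, fold circle $Z_f$, and (possibly nonorientable) regular fibers. For each Lefschetz critical point $p \in C_f$, the vanishing cycle $c$ together with a small meridian around $f(p)$ in the base sweeps out an embedded torus $T_p \subset X$; a torus surgery on $T_p$ of appropriate slope (in the spirit of Fintushel--Stern knot surgery) replaces the nodal fiber over $f(p)$ by a smooth regular fiber and so kills the singularity. Performing such a surgery at every $p \in C_f$ converts $X$ into a manifold $X_1$ whose induced map to $S^2$ has only fold singularities along $Z_f$. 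An analogous torus surgery adjacent to the fold circle, using a torus built from the ``fold vanishing cycle'' and a meridian to $Z_f$, absorbs the round $1$--handle into a trivial bundle piece; call the result $X_2$.

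The manifold $X_2$ now admits a genuine surface bundle structure $F \to X_2 \to S^2$. Further torus surgeries on tori built from loops in the fiber $F$ and loops in the base $S^2$ can unwind the monodromy, identifying $X_2$ up to torus surgery with a trivial bundle over $S^2$. A standard sequence of torus-surgery identifications then decomposes this trivial bundle as a connected sum of the advertised building blocks: each orientable handle of $F$ contributes pieces isomorphic to $S^1 \times S^3$ or $\CP$, while each crosscap of $F$ contributes pieces isomorphic to $S^2 \times \RP$ or $\mathbb{RP}^4$. Reversing all of the surgeries above realizes $X$ as obtained by torus surgery on a manifold $Z$ of the desired form, with the integers $a,b,c,d$ determined by the number of fold, Lefschetz, handle, and crosscap contributions tracked through the procedure.

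The main obstacle is to justify these cancellation torus surgeries in the nonorientable setting. In the orientable case, framings of tubular neighborhoods of the surgery tori are controlled by a single integer, whereas in the nonorientable setting these neighborhoods need not be trivial as oriented disk bundles; one must track framings carefully when the surgery torus sits inside a nonorientable fiber or passes near the fold to ensure the resulting modification is still a torus surgery rather than some more exotic operation. A secondary difficulty is bookkeeping the counts $a,b,c,d$: several nonorientable analogs of the classical identifications (such as $S^2 \times S^2$ becoming $\CP \# \CPb$ after torus surgery) need to be assembled, possibly introducing intermediate summands like $S^1 \twprod S^3$ that must subsequently be absorbed into the final form of $Z$.
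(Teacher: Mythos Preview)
Your proposal takes a completely different route from the paper and has a genuine gap at its first step. The paper does not use the SBLF structure at all for this theorem. Instead, it works directly with $\pi_1(X)$: choose disjoint curves $\tilde\gamma, \gamma_1, \ldots, \gamma_n$ giving a geometric basis, with $\tilde\gamma$ the sole orientation-reversing one. For each orientation-preserving $\gamma_i$, a torus surgery on an unknotted torus inside $\nu(\gamma_i) \cong S^1 \times D^3$ converts that neighborhood to $S^2 \times D^2 \,\#\, S^1 \times S^3$, killing $\gamma_i$ in $\pi_1$. After these surgeries one has $Y \,\#\, n(S^1 \times S^3) \,\#\, Z$ with $Z$ simply connected (handled by Iwase's theorem) and $\pi_1(Y)$ cyclic, generated by the image of $\tilde\gamma$. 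One further surgery on $\tilde\gamma^{\,2}$ forces $\pi_1 = \Z_2$, and then the Hambleton--Kreck--Teichner classification of nonorientable $4$--manifolds with $\pi_1=\Z_2$, combined with Gompf's stable diffeomorphism theorem, identifies the result---after stabilizing with sufficiently many copies of $\CP$ via Lemma~\ref{lem:Larson}---with one of the standard pieces.

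The central problem with your approach is the claim that a torus surgery on the vanishing-cycle torus $T_p$ ``replaces the nodal fiber by a smooth regular fiber.'' This is not what Fintushel--Stern knot surgery does, and there is no general mechanism by which a single torus surgery erases a Lefschetz critical point; what is true is that certain Luttinger surgeries realize monodromy \emph{substitutions}, trading one Dehn-twist factorization for another, but they do not simply delete factors. The analogous assertion for the fold circle is even less clear: the round $2$--handle encodes both a $2$--handle and a $3$--handle, and no single torus surgery absorbs both. Even if you reached a trivial bundle $N_k \times S^2$, you would still owe a concrete sequence of torus surgeries converting it into the advertised connected sum, and none is supplied. The framing issues you flag are real but secondary to these structural gaps.
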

	
	This result is a nonorientable version of a theorem of Iwase \cite{Iwase}; also see \cite{Baykur_Sunukjian_2012}. Since torus surgeries do not alter orientability, we necessarily have $a+b \geq 1$ and we can in fact assume $a+b=1$. The statement of Theorem~\ref{thm:surgery} extends to all closed, connected $4$--manifolds, whether orientable or not, yielding a $4$-dimensional version of the generalized Lickorish-Wallace theorem for \mbox{$3$--manifolds;} see Corollary~\ref{cor:4Dlickorish-wallace}. 
	
	\smallskip
	To obtain our results, we carefully adapt tools and arguments involving singular fibrations, topological surgeries, handle decompositions, Kirby calculus, mapping class groups of surfaces, and round handle cobordisms to the nonorientable setting. We expect this to be a fruitful direction for further study of nonorientable $4$--manifolds.

	\vspace{0.2in}
	\noindent \textit{Acknowledgements.} This work was partially supported by the NSF grant DMS-2005327. The authors thank Rafael Torres and Valentina Bais for their comments on the first draft of this paper.

	\clearpage
	\smallskip
	\section{Preliminaries}
	
	Below we summarize our conventions, review the main definitions and background results needed throughout the paper, and provide a few preliminary results for later sections. 
	
	\smallskip
	\noindent \textit{\underline{Conventions}:} All manifolds and maps in this article are smooth.  We denote a compact, connected surface $S$ with $b$ boundary components by $\Sigma_g^b$ when it is orientable, and by $N_k^b$ when it is nonorientable, where $g$ and $k$ are the genus in each case. We drop $b$ from the notation when $\partial S = \emptyset$. 
    In proving Theorem \ref{thm:g2_classification}, we will introduce the $4$--manifolds $N_n$ and $N'_n$, which are different than the nonorientable surface $N_k$. We assume it will be clear from context when this notation refers to a surface or a $4$--manifold.
    By 
	$\D(S)$ we denote the group of diffeomorphisms of $S$ that restrict to the identity near the boundary whenever it is nonempty. The \textit{mapping class group} of $S$ is $\M(S):=\pi_0(\D(S))$. When $S$ is orientable, we use $\D^+(S)$ and $\M^+(S)$ to denote the subgroups that consist of orientation-preserving elements. By a \emph{curve} on $S$, we always mean a simple closed curve, and often we denote its isotopy class by the same symbol. We call $c$ \emph{essential} if it does not bound a disk or a M\"obius band on $S$. A \textit{Dehn twist} along a curve $c$ on $S$ is denoted by $t_c \in \M(S)$. Given two spaces $A$ and $B$ where there are exactly two diffeomorphism types for the total spaces of $B$ fiber bundles over $A$, $A\twprod B$ denotes the unique non-trivial bundle.

	\subsection{Lefschetz, fold and cusp singularities}\label{sec:singularity_types} 
	
	Let $f\colon X\to \Sigma$ be a map from a compact, smooth $4$--manifold to a surface. When $\partial X \neq \emptyset$, assume $f^{-1}(\partial \Sigma)=\partial X$ and that $f$ is a submersion in a neighborhood of $\partial X$. A point $x\in\text{Int}(X)$ is a {\it singular point} if $\rm{rank}(df_x)< 2$. 
	
	We are concerned with the following three types of singularities that may occur at a singular point $x$, which can be characterized by the existence of local coordinates around $x \in X$ and $f(x) \in \Sigma$, in which $f$ conforms to the given local model:
	
	\begin{enumerate}
		\item a {\it Lefschetz singularity} for $(z_1,z_2)\mapsto z_1z_2$;
		\item a {\it fold singularity} for $(t,x_1,x_2,x_3)\mapsto (t,\pm x_1^2\pm x_2^2\pm x_3^2)$;
		\item a {\it cusp singularity} for $(t,x_1,x_2,x_3)\mapsto (t,x_1^3+tx_1\pm x_2^2\pm x_3^2)$.
	\end{enumerate} A fold or cusp singularity is {\it definite} if all coefficients of the quadratic terms in the local charts have the same sign, and is {\it indefinite} otherwise. When $X$ is orientable, one typically asks that the local model in (1) is compatible with the orientation.
	
	Maps from a $4$--manifold $X$ to a surface $\Sigma$ with only fold and cusp singularities, called \emph{generic maps} or \emph{Morse $2$--functions}, are open and dense in $C^\infty (X,\Sigma)$ endowed with the Whitney topology \cite{Whitney1955}. Their singular set constitutes an embedded $1$--manifold in $X$ with cusp singularities at discrete points. An {\it indefinite generic map} from a $4$--manifold to a surface is assumed to have only indefinite cusp and fold singularities. 
	
	Given a map $X\to \Sigma$ with only fold, cusp, and Lefschetz singularities, its {\it base diagram} is a picture of $\Sigma$ decorated with the singular image of $f$. We denote the image of a Lefschetz singularity with an ``x'' and indefinite fold and cusp singularities by smoothly embedded arcs and circles with at most transverse double-point intersections along fold points. The cusp singularities occur exactly over the cuspidal points along these folds arcs. Definite fold circles will be shown in red circles. Small transverse arrows decorate fold circles and point in the direction of the higher index handle attachment. In other words, the arrows point toward the region whose regular fiber has a higher euler characteristic. Figure \ref{fig:s1s3_homotopy1} gives an example of base diagrams with this notation.

	\subsection{Simplified broken Lefschetz fibrations} 
	A {\it broken Lefschetz fibration} (BLF) is a surjective map $f \colon X\to \Sigma$ with only Lefschetz and indefinite fold singularities \cite{ADK}. 
	The sets of Lefschetz and indefinite fold singularities constitute disjoint embedded submanifolds $C_f$ and $Z_f$ of $X$ of dimensions zero and one, respectively. 
	
	When $X$ is a nonorientable $4$--manifold, there is no orientation compatibility for the local models around Lefschetz singular points, akin to the \emph{achiral} Lefschetz fibrations on orientable $4$--manifolds \cite{Gompf-Stipsicz}. 
	Moreover, the fibers can be both orientable and nonorientable closed surfaces, possibly disconnected. The local models for (indefinite) folds are already indifferent to orientations. One can assume that $f(Z_f)$ is immersed in the base $\Sigma$ and is always a two-sided curve, where a regular fiber over one side is cobordant to a regular fiber over the other through a three dimensional $1$--handle attachment. The side resulting from the $1$--handle attachment is the \emph{higher genus side}.
	
	\begin{definition} \label{def:SBLF}
		A {\it simplified broken Lefschetz fibration} (SBLF) $f\colon X\to S^2$ is a BLF with the following additional properties:
		\begin{enumerate}[(i)]
			\item $f$ is injective on $C_f \sqcup Z_f$,
			\item $Z_f$ is connected,
			\item all fibers of $f$ are connected, and 
			\item $f(C_f)$ lies on the higher genus side. 
		\end{enumerate}
	\end{definition}
	
	When $Z_f=\emptyset$, this is an (achiral) Lefschetz fibration. If $f\colon X\to S^2$ is an SBLF away from finitely many points around which $f$ is modeled by $(z_1,z_2)\mapsto z_1/z_2$, we get an (achiral) {\it simplified broken Lefschetz pencil}. Although there is no good way to depict a general BLF using handle diagrams or factorizations in the mapping class group, the additional assumptions for a simplified BLF allow for fairly simple descriptions that generalize those of Lefschetz fibrations, which we discuss shortly.
	
	\subsection{Simplified trisections}
	Let $X$ be a closed, connected $4$--manifold and let $g\geq k$. A \emph{$(g,k)$ trisection} of $X$ is a decomposition $X=X_1\cup X_2\cup X_3$, where each $X_i$ is diffeomorphic to a $4$--dimensional handlebody, with pairwise intersections along a $3$--dimensional handlebody, and triple intersection along a \emph{central surface} $S$ \cite{Gay-Kirby:trisection}, such that:
	\begin{enumerate}
		\item $X_i = \natural_k S^1\x D^3$, $X_i \cap X_j = \natural_g S^1\x D^2$, $S = \Sigma_g$ when $X$ is orientable, and
		\item $X_i = \natural_k S^1\twist \, D^3$, $X_i \cap X_j = \natural_{g} S^1\twist \, D^2$, $S = N_{2g}$ when $X$ is nonorientable,
	\end{enumerate}
	where the indices run through all $i \neq j$ in $\{1, 2, 3\}$. Here $g$ is the \emph{genus} of the trisection. (Note that other sources, e.g. \cite{Spreer-Tillman}, use different conventions when counting the genus of a nonorientable trisection.) We will sometimes drop the $(g,k)$ prefix when the genus is not important. Similar to how Heegaard decompositions of $3$--manifolds correspond to certain Morse functions to $\R$, trisections on a $4$--manifold correspond to certain Morse $2$--functions to $\R^2$ that we call {\it trisection maps} \cite{Gay-Kirby:trisection}. \emph{Simplified trisections} are a special class of trisections that correspond to trisection maps with embedded singular images \cite{Baykur-Saeki:TAMS, Baykur-Saeki:PNAS}. We refer the reader to \cite{Baykur-Saeki:TAMS, Hayano:STS} for more details on simplified trisections, including how they induce rather special \emph{trisection diagrams} (which we do not discuss in this article), as well as on their stable equivalence.

	\subsection{Nonorientable Kirby diagrams}\label{sec:Kirby} 
	Much of the theory developed for Kirby diagrams of orientable $4$--manifolds extends in a straightforward manner to the nonorientable case, though there are a few distinctions to note.

	One can determine if a given handlebody is orientable or not by looking at its $1$--skeleton. A $1$--handle is \emph{untwisted} if it forms a copy of $S^1\times D^3$ when attached to $D^4$. By contrast, a $1$--handle is \emph{twisted} if it forms a copy of $S^1\twist D^3$, the nonorientable $D^3$--bundle over $S^1$, when glued to $D^4$. For an example, consider the Kirby diagram shown on the left in Figure \ref{fig:rp4_rp4}, which has a twisted and an untwisted $1$--handle. The twisted $1$--handle is distinguished from the untwisted one by the pink shading, which indicates an additional reflection in how its attaching feet are identified. If the $x$ and $y$--axes correspond to horizontal and vertical lines on the page, the untwisted $1$--handle attaching feet are identified via $(x,y,z)\sim (x,-y,z)$, whereas the twisted attaching feet are identified via $(x,y,z)\sim (-x,-y,z)$.
	
	A $2$--handle's framing is indicated with an integer coefficient, which gives the linking number between the $2$--handle's attaching circle and its push-off. When a $2$--handle goes over a twisted $1$--handle, we assign a framing coefficient to each arc of the link rather than the entire link. This is because the linking number of an attaching circle and its push-off changes signs when the crossings are pushed through the twisted $1$--handle; see e.g.  \cite{Akbulut1984} (cf. \cite{Miller-Naylor}). In this paper, we adopt the following convention: whenever a $2$--handle goes over a twisted $1$--handle, any labeled arc of its attaching circle has framing {\it along that arc} given by the label. The reader may assume that any unlabeled arcs have zero framing. Note that this convention will be used for any Kirby diagram that has twisted $1$--handles, \emph{including the orientable Kirby diagrams given by Proposition \ref{prop:dble_cover_alg}}. We also write $\tilde h^3$ to denote a twisted $3$--handle, which is an upside-down twisted $1$--handle. 
	
	Recall that if a $4$--manifold $X$ contains an embedded $2$--sphere $S$ with a regular neighborhood a $D^2$--bundle over $S^2$ with Euler number $\pm 1$, then one can \emph{blow down} $S$ to obtain a new $4$--manifold $X':= (X \setminus \nu S) \cup D^4$. In the nonorientable case, we can similarly perform a \emph{nonorientable blow-down} of an embedded real projective plane $R$ whose regular neighborhood is a copy of $\RP\twist D^2$, now to get $X'':= (X \setminus \nu R) \cup \, S^1\twist D^3$ \cite{Akbulut1984}. Each one of these surgeries are realized by trading the two pieces of the decompositions $\CP = \nu \CPo \cup D^4$ and $\mathbb{RP}^4 = \nu \mathbb{RP}^2 \cup S^1\twist D^3$.

	\begin{example} \label{eg:RP4_RP4}
		The left hand side of Figure \ref{fig:rp4_rp4} shows a Kirby diagram for a fiber bundle over $S^2$ with Klein bottle fibers, where one $2$--handle is attached to a copy of $D^2\times Kb$ with nontrivial framing. The following handle moves identify the total space of this bundle as $\mathbb{RP}^4\#\mathbb{RP}^4$: slide the fiber $2$--handle of the obvious copy of $Kb\times D^2$ over the other $2$--handle, so that it slides off the twisted $1$--handle. Then slide the untwisted $1$--handle over the twisted one. The result is shown in the middle of Figure \ref{fig:rp4_rp4}. Removing both of the $2$--handles would amount to performing two nonorientable blow-downs, leaving a total space of $S^1\twist S^3\# S^1\twist S^3$, shown on the right of Figure \ref{fig:rp4_rp4}.
	\end{example}
	
	\begin{figure}
		\centering
		\resizebox{!}{1.5in}{
			\begin{tikzpicture}
				
				\begin{scope}[scale=.5]
					
					\fill[color = pink] (-3.5,0) .. controls +(0,.2775) and +(-.27775,0) .. (-3,.5) .. controls +(.2775,0) and +(0,.2775) .. (-2.5,0);
					\fill[color = pink] (3.5,0) .. controls +(0,-.2775) and +(.27775,0) .. (3,-.5) .. controls +(-.2775,0) and +(0,-.2775) .. (2.5,0);

					\draw[thick] (-3,0) circle [radius = .5cm];
					\draw[thick] (3,0) circle [radius = .5cm];
					\draw[thick] (0,3) circle [radius = .5cm];
					\draw[thick] (0,-3) circle [radius = .5cm];
					
					\begin{knot}[
						clip width = 5pt,
						flip crossing/.list={2}
						]
						\strand[thick] (.5,3) -- (2,3) .. controls +(1,0) and +(0,1) .. (3,2) -- (3,.5);
						\strand[thick, rotate = 90] (.5,3) -- (2,3) .. controls +(1,0) and +(0,1) .. (3,2) -- (3,.5);
						\strand[thick, rotate = 180](.5,3) -- (2,3) .. controls +(1,0) and +(0,1) .. (3,2) -- (3,.5);
						\strand[thick, rotate = -90] (.5,3) -- (2,3) .. controls +(1,0) and +(0,1) .. (3,2) -- (3,.5);

						\strand[thick] ({-3+.25*sqrt(3)},-.25) .. controls +(0,0) and +(-2,0) .. (0,-.5) .. controls +(2,0) and +(0,0).. ({3-.25*sqrt(3)},-.25);
						\strand[thick] ({-3+.25*sqrt(3)},.25) .. controls +(0,0) and +(-2,0) .. (0,.5) .. controls +(2,0) and + (-.5,1) .. (3.25,2.5)  .. controls +(.3,-.75) and +(-.5,.5) .. ({3-.25*sqrt(3)},.25);
					\end{knot}
					
					\filldraw[fill=white,thick] (0,.5) circle [radius = .3cm] (0,.5) node[scale=.7] {$1$};
					\draw (-2,-4.5) node[anchor=west,scale=1.3] {$\cup\, \tilde h^3, h^3, h^4$};
					\draw (-3.2,3.2) node{$0$};

				\end{scope}

				\draw (3.1,0) node {\huge $\cong$};
				\draw (3.1,-.3) node[scale=.7, anchor = north] {handle slides};
				
				\begin{scope}[xshift = 6cm, scale = .5]
					\fill[color = pink] (-3.5,-2) .. controls +(0,.2775) and +(-.27775,0) .. (-3,-1.5) .. controls +(.2775,0) and +(0,.2775) .. (-2.5,-2);
					\fill[color = pink] (3.5,-2) .. controls +(0,-.2775) and +(.27775,0) .. (3,-2.5) .. controls +(-.2775,0) and +(0,-.2775) .. (2.5,-2);
					\draw[thick] (-3,-2) circle [radius = .5cm];
					\draw[thick] (3,-2) circle [radius = .5cm];
					
					\draw[thick] ({-3+.25*sqrt(3)},-2.25) .. controls +(0,0) and +(-2,0) .. (0,-2.75) .. controls +(2,0) and +(0,0).. ({3-.25*sqrt(3)},-2.25);
					\draw[thick] ({-3+.25*sqrt(3)},-1.75) .. controls +(0,0) and +(-2,0) .. (0,-1.25) .. controls +(2,0) and +(0,0).. ({3-.25*sqrt(3)},-1.75);
					
					\filldraw[fill=white,thick] (0,-1.25) circle [radius = .3cm] (0,-1.25) node[scale=.7] {$1$};

					\fill[color = pink] (-3.5,2) .. controls +(0,-.2775) and +(-.27775,0) .. (-3,1.5) .. controls +(.2775,0) and +(0,-.2775) .. (-2.5,2);
					\fill[color = pink] (3.5,2) .. controls +(0,.2775) and +(.27775,0) .. (3,2.5) .. controls +(-.2775,0) and +(0,.2775) .. (2.5,2);
					\draw[thick] (-3,2) circle [radius = .5cm];
					\draw[thick] (3,2) circle [radius = .5cm];
					
					\draw[thick] ({-3+.25*sqrt(3)},2.25) .. controls +(0,0) and +(-2,0) .. (0,2.75) .. controls +(2,0) and +(0,0).. ({3-.25*sqrt(3)},2.25);
					\draw[thick] ({-3+.25*sqrt(3)},1.75) .. controls +(0,0) and +(-2,0) .. (0,1.25) .. controls +(2,0) and +(0,0).. ({3-.25*sqrt(3)},1.75);
					
					\filldraw[fill=white,thick] (0,2.75) circle [radius = .3cm] (0,2.75) node[scale=.7] {$1$};
					
					\draw (-2,-4.5) node[anchor=west,scale=1.3] {$\cup\, \tilde h^3, \tilde h^3, h^4$};
					
				\end{scope}
				
				\draw (9,0) node {\huge $\rightarrow$};
				\draw (9,-.2) node [anchor=north,scale=.7] {blow-down};
				
				\begin{scope}[xshift = 12cm, scale = .5]
					\fill[color = pink] (-3.5,-2) .. controls +(0,.2775) and +(-.27775,0) .. (-3,-1.5) .. controls +(.2775,0) and +(0,.2775) .. (-2.5,-2);
					\fill[color = pink] (2.5,-2) .. controls +(0,-.2775) and +(.27775,0) .. (2,-2.5) .. controls +(-.2775,0) and +(0,-.2775) .. (1.5,-2);
					\draw[thick] (-3,-2) circle [radius = .5cm];
					\draw[thick] (2,-2) circle [radius = .5cm];

					\fill[color = pink] (-3.5,2) .. controls +(0,-.2775) and +(-.27775,0) .. (-3,1.5) .. controls +(.2775,0) and +(0,-.2775) .. (-2.5,2);
					\fill[color = pink] (2.5,2) .. controls +(0,.2775) and +(.27775,0) .. (2,2.5) .. controls +(-.2775,0) and +(0,.2775) .. (1.5,2);
					\draw[thick] (-3,2) circle [radius = .5cm];
					\draw[thick] (2,2) circle [radius = .5cm];

					\draw (-2,-4.5) node[anchor=west,scale=1.3] {$\cup\, \tilde h^3, \tilde h^3, h^4$};
					
				\end{scope}
				
			\end{tikzpicture}
		}

		\caption
        {Left: a Kb--bundle over $S^2$. Middle: the standard Kirby diagram for  $\mathbb{RP}^4\#\mathbb{RP}^4$. Right: a diagram of $S^1\twist S^3\#S^1\twist S^3$, obtained from the middle diagram after a pair of nonorientable blow-downs.}
		\label{fig:rp4_rp4}
	\end{figure}
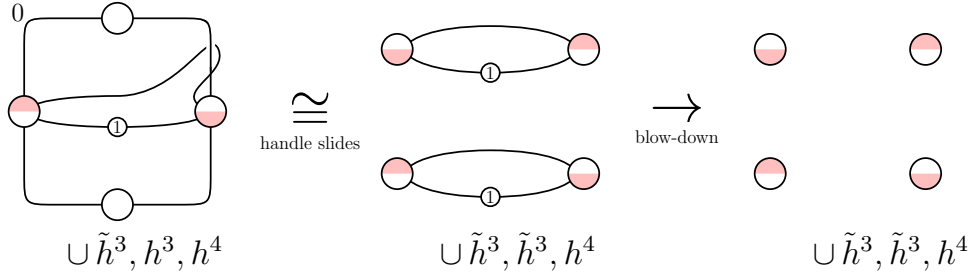

	In the next proposition, we explain how to derive a handle diagram of the orientation double cover of a nonorientable $4$--manifold with a particular type of handle decomposition. This will come in handy when we classify SBLFs with Klein bottle fibers in Section \ref{sec:KB_bundles}.
	
	\begin{proposition} \label{prop:dble_cover_alg}
    Let $N$ be a nonorientable Kirby diagram with one $0$--handle, one twisted $1$--handle, and $k$ untwisted $1$--handles. Let $f\colon N\to \mathbb{R}$ be the associated Morse function, and let $p\colon M\to N$ be an orientation double cover. A Kirby diagram for $M$ with associated Morse function $f\circ p$ is constructed as follows.\begin{enumerate}
			\item Start by drawing two $0$--handles $h_1^0$ and $h_2^0$, separated on the page by a dashed line. On each of these $0$--handles, glue $k$ untwisted $1$--handles, exactly as they're glued to the $0$--handle of $N$.
			
			\item Add two twisted $1$--handles $\tilde h_1^1$ and $\tilde h_2^1$ to the Kirby diagram in the following way: let $a_i$ and $b_i$ be the attaching feet of $\tilde h^1_i$ for $i=1,2$. Put $a_1$ and $a_2$ in $\partial h^0_1$, drawn exactly as the feet of the twisted 1--handle in $N$ are drawn. Do the same for $b_1$ and $b_2$ in $\partial h^0_2$. 
			
			\item Draw all the remaining handles of $N$ (namely, the $n$--handles for $n\geq 2$) on $h^0_1$ and the $1$--handles attached to it, treating $a_1\cup a_2$ as the attaching feet of the twisted $1$--handle in $N$. Do the same for $h^0_2$.
			
		\end{enumerate}
		
	\end{proposition}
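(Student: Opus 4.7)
The plan is to prove this by interpreting the handle decomposition of $N$ as data for a CW structure, analyzing the orientation double cover on a handle-by-handle basis, and then verifying that the lifted attaching data matches the description in the algorithm. First, I would recall that the orientation double cover $p\colon M\to N$ corresponds to the kernel of the homomorphism $w_1(N)\colon \pi_1(N)\to \mathbb{Z}/2$ determined by the first Stiefel--Whitney class. With the given handle decomposition, $\pi_1(N)$ is generated by the loops through the $1$--handles, and $w_1$ evaluates to $1$ on the loop through the twisted $1$--handle and to $0$ on the loops through the untwisted ones. Because $p$ is a local diffeomorphism, the Morse function $f\circ p\colon M\to \mathbb{R}$ is Morse with the critical points of $f\circ p$ in bijection (two-to-one) with those of $f$, so $M$ inherits a handle decomposition whose handles are precisely the connected components of the preimages of the handles of $N$.

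Next, I would analyze the handles one index at a time. The unique $0$--handle $h^0\cong D^4$ lifts to two disjoint copies $h^0_1, h^0_2$, giving Step~(1) of the algorithm. Each $k$--handle with $k\geq 2$ has attaching sphere $S^{k-1}$, which is simply connected, so it lifts to two disjoint copies and yields two handles in $M$. For each untwisted $1$--handle, the core loop lies in $\ker w_1$, so each lift has both feet on the same $0$--handle; this produces two untwisted $1$--handles attached to $h^0_1$ and $h^0_2$ exactly as in $N$, also as in Step~(1). For the single twisted $1$--handle $\tilde h^1$, the core loop is the generator of $\pi_1(N)/\ker w_1$, so every lift must swap sheets; hence $\tilde h^1$ lifts to two $1$--handles $\tilde h^1_1, \tilde h^1_2$, each with exactly one foot on $h^0_1$ and one on $h^0_2$. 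The attaching feet in the cover sit over those of $\tilde h^1$ via a local diffeomorphism, which justifies drawing $a_1, a_2$ in $\partial h^0_1$ and $b_1, b_2$ in $\partial h^0_2$ in the same positions used for the twisted $1$--handle of $N$, exactly as Step~(2) specifies.

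For Step~(3), I would check that the lift of each higher-index handle can be taken to be a literal copy of the original attaching data, transported into $h^0_1$ together with its attached $1$--handles. The key point is that any attaching curve in the $1$--skeleton of $N$ can be homotoped to read off a word in the $1$--handle generators, and each deck transformation sends a lift of this curve based in one sheet to the corresponding lift based in the other. So once we fix which sheet we are drawing, the attaching curves in $\partial h^0_1$ (together with the untwisted $1$--handles on $h^0_1$ and the feet $a_1\cup a_2$ serving as the lifted twisted handle) are combinatorially identical to those drawn in $N$, and similarly for $h^0_2$. The main obstacle I expect is a bookkeeping one: verifying that framings, orientations of attaching circles, and crossings transfer correctly, particularly for $2$--handles whose attaching circles traverse the twisted $1$--handle (where framing conventions along arcs matter as explained in Section~\ref{sec:Kirby}). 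I would handle this by examining a local model of $p$ near a neighborhood of the twisted $1$--handle, where the double cover restricts to the standard double cover of $S^1\twprod D^3$ by $S^1\times D^3$, and confirming that the local identification between the two arcs of an attaching circle passing through is compatible with the chosen framing convention.
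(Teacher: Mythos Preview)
Your proposal is correct and follows essentially the same approach as the paper: both identify the orientation double cover via $\ker w_1$, lift the $1$--skeleton by doubling the untwisted $1$--handles within each sheet and letting the twisted $1$--handle lift to two sheet-swapping $1$--handles, and then lift the higher-index handles by transporting their attaching data verbatim into each sheet. The only cosmetic difference is that you handle the $3$-- and $4$--handles uniformly via simply-connectedness of their attaching spheres, whereas the paper instead caps off by identifying $\partial N^{(2)}\cong\#_c(S^1\twprod S^2)$ and invoking its known orientation double cover.
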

	
	\begin{proof}
		Recall that given a nonorientable manifold $N$, the orientation double cover $p\colon M\to N$ corresponds to the kernel of the map $w_1(N)\colon \pi_1(N)\to \ZZ_2$, where $w_1$ is the evaluation of the first Stiefel-Whitney class on $\pi_1$; it sends homotopy classes of orientation--preserving loops to $0$ and orientation--reversing loops to $1$. In other words, we have a short exact sequence
		\begin{center}
			\begin{tikzcd}
				0\arrow[r] &\pi_1(M) \arrow[r,"p_*"] & \pi_1(N)\arrow[r,"w_1(N)"] & \ZZ_2 \arrow[r] &0.
			\end{tikzcd}
		\end{center}
		\noindent Now let $N$ be as in the proposition statement. Let $N^{(m)}$ denote the $m$--skeleton of $N$, which is comprised of all $i$--handles of $N$ for $i\leq m$. By assumption, $N^{(1)}$ has $k$ untwisted $1$--handles and one twisted $1$--handle. So $\pi_1(N)$ has a presentation \[
		\pi_1(N)= \lan a_1,a_2,\cdots a_k, n | r_1,r_2,\cdots r_\ell\ran,
		\] where $n$ is the one generator corresponding to a twisted $1$--handle. Let $N_O^{(1)}$ be all of $N^{(1)}$ except the twisted $1$--handle. It's clear that $N_O^{(1)}\cong \natural_k (S^1\times D^3)$ and it admits a double cover $p^{(1)}_O\colon M^{(1)}_O\to N_O^{(1)}$ which is two disjoint copies of $N_O^{(1)}$ covering one. We may add two twisted $1$--handles to $M^{(1)}_O$ as prescribed in step (2) of the proposition statement\footnote{The careful reader may notice that modulo swapping handles, there are two possible ways to attach the twisted $1$--handle attaching feet to $M^{(1)}_O$ which agree with step (2) of the proposition statement. The diffeomorphism type of the resulting double cover is not affected by this choice.} to obtain $M^{(1)}$, and also add the twisted $1$--handle to $N_O^{(1)}$ to complete it to the 1--skeleton $N^{(1)}$. The cover $p^{(1)}_O$ naturally extends over these new $1$--handles, and its extension is the orientation double--cover $p^{(1)}\colon \natural_{2k+1}(S^1\times D^3)\to \natural_k (S^1\twist D^3).$ The fundamental group of $M^{(1)}$ is generated by $\{a_1^1\cdots a_k^1,a_1^2,\cdots a_k^2,\gamma\}$, where each $a_i^j$ is a lift of $a_i\in \pi_1(N^{(1)})$, and $\gamma$ is the loop formed by both of the twisted $1$--handles in $M^{(1)}$, which maps to $n^2\in \pi_1(N^{(1)})$ under $p^{(1)}$. 
		
		At this point, we can move on to step (3) by adding the $2$--handles of $N$ to $N^{(1)}$, and then add the obvious candidates for their lifts to $M^{(1)}$. Since $M^{(1)}$ is drawn in such a way that it resembles two disjoint copies of $N^{(1)}$, except for a pair of swapped $1$--handle attaching feet, there's an intuitive way to draw a pair of $2$--handle attaching circles on $M^{(1)}$ for every $2$--handle attaching circle drawn on $N^{(1)}$. Since $\pi_1(M^{(2)})$ (resp. $\pi_1(N^{(2)})$) have the same generators as $\pi_1(M^{(1)})$ (resp. $\pi_1(N^{(1)})$), the map $p^{(2)}\colon M^{(2)}\to N^{(2)}$ is completely determined. Moreover, the $2$--handles in $M^{(2)}$ are attached in such a way that each relation in $\pi_1(M^{(2)})$ is sent to a relation in $\pi_1(N^{(2)})$, so this map is well-defined. Since the induced image on $\pi_1$ of $p^{(2)}$ is still $\langle a_1,\cdots a_k, n^2\rangle \subset \pi_1(N^{(2)})\cong \pi_1(N)$, $p^{(2)}$ remains an orientation double cover after this extension over the $2$--handles.
		
	   We have the completed 2--skeleton of $N$, whose boundary is $\#_c (S^1\twist S^2)$ for some $c>0$. It remains to extend the orientation double cover $p^{(2)}$ to all of $N$ by adding $3$-- and $4$--handles. Since $\partial(N^{(2)})\cong \#_c (S^1\twist S^2)$ for some $c\geq 1$, and $\partial(M^{(2)})\cong \#_{2c+1}  (S^1\times S^2)$ is the orientation double cover of $\partial(N^{(2)})$, the orientation double cover $\natural_{2c+1} (S^1\times D^3) \to \natural_c (S^1\twist D^3)$ agrees with $p^{(2)}$ along their common boundaries, and is comprised of only $0$-- and $1$--handles. Dually, this double cover is comprised of only $3$-- and $4$--handles. Turning this double cover upside down and gluing the domain (resp. codomain) to $\partial(M^{(2)})$ (resp. $\partial (N^{(2)})$) completes the orientation double cover construction. 
		
	   In sum, we've constructed an orientation double cover $p\colon M\to N$. The construction of $M$ is exactly as described in the proposition statement. It follows by construction that the Morse function corresponding to $M$ is obtained by post-composing $p$ with the Morse function on $N$.
	\end{proof}
	
	\begin{example}
		Figure \ref{fig:dble_cov_eg} shows the Kirby diagram of the orientation double cover obtained from the diagram of $\RR\PP^4\#\RR\PP^4$ given on the \emph{left} of Figure \ref{fig:rp4_rp4}. The dashed line in the center separates the two $0$--handles. There are four $2$--handles in the orientation double cover, distinguished by their shading.  Although this is a Kirby diagram for an orientable 4-manifold, it nonetheless has lifts of twisted $1$--handles (with now feet on different $0$--handles), and the $2$--handle framing changes sign whenever it's pushed through one of them. For this reason, we continue to assign framing coefficients to arcs rather than entire links. A sequence of Kirby moves turns this diagram into a standard handle diagram of $S^1\times S^3$; first cancel one of the $0$--handles with either of the pink or green-shaded $1$--handles. From there, the gray $2$--handle may be canceled with one of the unshaded $1$--handles. Next slide the black $2$--handle over both the blue and red $2$--handles (which, at this point, have framing coefficients with opposite signs), so that the black $2$--handle becomes a $0$--framed unknot. After sliding the purple $2$--handle over the blue one (or vice versa), everything cancels except for one $i$--handle for $i=0, 1, 3, 4$.

	\end{example}
		\begin{figure} 
		\centering
		
		\resizebox{!}{1.5in}{
			\begin{tikzpicture}
	
            	\begin{scope}[scale=.7]
            		
            		
            		\fill[color = green] (-3.5,0) .. controls +(0,.2775) and +(-.27775,0) .. (-3,.5) .. controls +(.2775,0) and +(0,.2775) .. (-2.5,0);
            		\fill[color = pink] (3.5,0) .. controls +(0,-.2775) and +(.27775,0) .. (3,-.5) .. controls +(-.2775,0) and +(0,-.2775) .. (2.5,0);

            		\draw[thick] (-3,0) circle [radius = .5cm];
            		\draw[thick] (3,0) circle [radius = .5cm];
            		\draw[thick] (0,3) circle [radius = .5cm];
            		\draw[thick] (0,-3) circle [radius = .5cm];
            		
            		\begin{knot}[
            			clip width = 5pt,
            			flip crossing/.list={2}
            			]
            			\strand[thick,color=gray] (.5,3) -- (2,3) .. controls +(1,0) and +(0,1) .. (3,2) -- (3,.5);
            			\strand[thick, rotate = 90] (.5,3) -- (2,3) .. controls +(1,0) and +(0,1) .. (3,2) -- (3,.5);
            			\strand[thick, rotate = 180](.5,3) -- (2,3) .. controls +(1,0) and +(0,1) .. (3,2) -- (3,.5);
            			\strand[thick, rotate = -90,color=gray] (.5,3) -- (2,3) .. controls +(1,0) and +(0,1) .. (3,2) -- (3,.5);

            			\strand[thick,color=purple] ({-3+.25*sqrt(3)},-.25) .. controls +(0,0) and +(-2,0) .. (0,-.5) .. controls +(2,0) and +(0,0).. ({3-.25*sqrt(3)},-.25);
            			\strand[thick,color=blue] ({-3+.25*sqrt(3)},.25) .. controls +(0,0) and +(-2,0) .. (0,.5) .. controls +(2,0) and + (-.3,.75) .. (3.25,2) .. controls +(.3,-.75) and +(-.5,.5) .. ({3-.25*sqrt(3)},.25);
            		\end{knot}
            		
            		\filldraw[color=blue,fill=white,thick] (0,.5) circle [radius = .3cm] (0,.5) node {$1$};
            		\draw (-1,-4.25) node[scale=1.3,anchor=west] {$\cup\, \tilde h^3, h^3, h^4$};
            		\draw (-3.2,3.2) node{$0$};

            	\end{scope}
            	
            	\draw[thick,dashed] (4.8,-2.7) -- (4.8,2.5);
            	
            	\begin{scope}[xshift = 9cm, scale = .7]
            		
            		
            		\fill[color = pink] (-3.5,0) .. controls +(0,.2775) and +(-.27775,0) .. (-3,.5) .. controls +(.2775,0) and +(0,.2775) .. (-2.5,0);
            		\fill[color = green] (3.5,0) .. controls +(0,-.2775) and +(.27775,0) .. (3,-.5) .. controls +(-.2775,0) and +(0,-.2775) .. (2.5,0);

            		\draw[thick] (-3,0) circle [radius = .5cm];
            		\draw[thick] (3,0) circle [radius = .5cm];
            		\draw[thick] (0,3) circle [radius = .5cm];
            		\draw[thick] (0,-3) circle [radius = .5cm];
            		
            		\begin{knot}[
            			clip width = 5pt,
            			flip crossing/.list={2}
            			]
            			\strand[thick] (.5,3) -- (2,3) .. controls +(1,0) and +(0,1) .. (3,2) -- (3,.5);
            			\strand[thick, rotate = 90,color = gray] (.5,3) -- (2,3) .. controls +(1,0) and +(0,1) .. (3,2) -- (3,.5);
            			\strand[thick, rotate = 180, color = gray](.5,3) -- (2,3) .. controls +(1,0) and +(0,1) .. (3,2) -- (3,.5);
            			\strand[thick, rotate = -90] (.5,3) -- (2,3) .. controls +(1,0) and +(0,1) .. (3,2) -- (3,.5);

            			\strand[thick,color=blue] ({-3+.25*sqrt(3)},-.25) .. controls +(0,0) and +(-2,0) .. (0,-.5) .. controls +(2,0) and +(0,0).. ({3-.25*sqrt(3)},-.25);
            			\strand[thick,color=purple] ({-3+.25*sqrt(3)},.25) .. controls +(0,0) and +(-2,0) .. (0,.5) .. controls +(2,0) and + (-.3,.75) .. (3.25,2) .. controls +(.3,-.75) and +(-.5,.5) .. ({3-.25*sqrt(3)},.25);
            		\end{knot}
            		
            		\filldraw[color=purple,fill=white,thick] (0,.5) circle [radius = .3cm] (0,.5) node {$1$};
            		\draw (-1,-4.25) node[scale=1.3,anchor=west] {$\cup\, \tilde h^3, h^3, h^4$};
            		\draw[color=gray] (-3.2,3.2) node{$0$};
            	\end{scope}
            	
            \end{tikzpicture}
		}
		\caption{The orientation double cover of $\mathbb{RP}^4\#\mathbb{RP}^4$. The Kirby diagram shown has two $0$--handles, two twisted $1$--handles, two untwisted $1$--handles, and four $2$--handles. The vertical dashed line distinguishes the two $0$--handles, and the $2$--handles are drawn in black, gray, red, and blue. Rotating one of the $0$--handles, all of the $1$--handles become untwisted.} 
		\label{fig:dble_cov_eg}
	\end{figure}
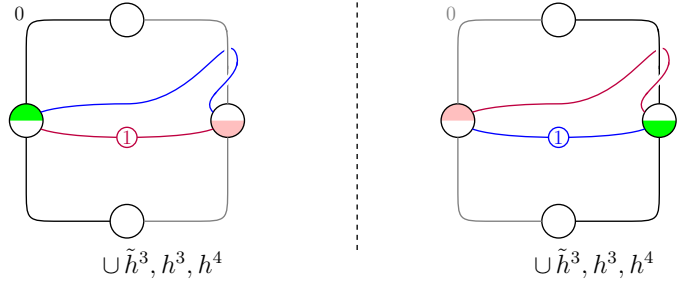

    \begin{example}
		Consider the handlebody for a $2$-skeleton $R_n$ shown on the left in Figure \ref{fig:Rn_1_2}, defined for $n> 1$. Let $B_n$ denote the orientable $2$-handlebody shown on the right in Figure \ref{fig:Rn_1_2}. As described in \cite{fintushel_stern_blowdown}, each $B_n$ is a rational homology $4$--ball with $\pi_1=\Z_n$, and has a lens space $L(n^2,n-1)$ as its boundary. We use Proposition \ref{prop:dble_cover_alg} to show that the orientation double-cover of $R_n$ is $D(B_n)-(S^1\times D^3)$, where $D(B_n)$ is the double of $B_n$. The handle moves to make this identification, illustrated in Figure \ref{fig:Rn_2_2}, are as follows. First, cancel the right-hand $0$--handle with the green $1$--handle to obtain (2). When this happens, the part of the diagram drawn on the canceled $0$--handle reverses orientation. With this, the entire right-hand side of the diagram is vertically reflected and the $2$--handle framing coefficients swap signs. After this, there are no longer twisted $1$--handles. Next slide the black $2$--handle over the blue one to obtain (3). The black $2$--handle may then be isotoped as in (4), so that it goes around the upper $n-1$ strands of the blue $2$--handle. Isotoping the black $2$-handle around (but not through) the left-side $1$--handle attaching foot, it forms a meridian on the bottom-most stand of the blue $2$--handle as in (5). This confirms that the orientation double cover of $R_n$ is $D(B_n) - (S^1\times D^3)$.\footnote{We expect that the nonorientable $4$--manifold $R_n$ is the $2$--skeleton of the $4$--manifolds $N(2n,1,1)$ studied in \cite{teichner-etal}. This would generalize the case of $R_1 = \mathbb{RP}^4$, giving a family of $4$--manifolds admitting perfect Morse functions, answering a question communicated to us by Peter Teichner.}
		
		\begin{figure}
			\centering
			\resizebox{!}{1.5in}{
				\begin{tikzpicture}
					\begin{scope}[scale=.9, xshift = -5cm]
						
						\begin{knot}[
							clip width = 7pt,
							]

							\strand[thick] (-3,0) .. controls +(.1,1) and +(-2,0) .. (0,1.25) .. controls +(2,0) and +(-.5,.5).. (3,0);
							\strand[thick] (-3,0) .. controls +(1,.2) and +(-2,0) .. (0,.2) .. controls +(2,0) and + (-.5,.2) .. (2.25,1.5) .. controls +(.5,-.2) and +(-.5,1) .. (3,0) ;
							
							\strand[thick] (-3,0) .. controls +(.1,-1) and +(-2,0) .. (0,-1.25) .. controls +(2,0) and +(-.1,-1).. (3,0);
							\strand[thick] (-3,0) .. controls +(1,-.2) and +(-2,0) .. (0,-.2) .. controls +(2,0) and +(-1,-.2).. (3,0);

						\end{knot}

						
						\draw (-1.75,.85) node{$\vdots$};
						\draw (-1.7, .7) node[anchor = west, scale=.75] {($n$)};
						
						\draw  (-1.75,-.5) node {$\vdots$};
						\draw (-1.75,-.7) node[anchor = west, scale =.75] {($n$)};
						
						
						\fill[color=white] (-3,0) circle [radius = .5cm];
						\fill[color=white] (3,0) circle [radius = .5cm];
						
						\fill[color = pink] (-3.5,0) .. controls +(0,.2775) and +(-.27775,0) .. (-3,.5) .. controls +(.2775,0) and +(0,.2775) .. (-2.5,0);
						\fill[color = pink] (3.5,0) .. controls +(0,-.2775) and +(.27775,0) .. (3,-.5) .. controls +(-.2775,0) and +(0,-.2775) .. (2.5,0);
						
						\draw[thick] (-3,0) circle [radius = .5cm];
						\draw[thick] (3,0) circle [radius = .5cm];
						
					\end{scope}
					
					\begin{scope} [xshift = 5cm]
						
						\draw[thick] (-.1,-1.05) .. controls +(0,0) and +(-.1,.-.05) .. (0,-.8) .. controls + (.5,0) and +(0,1) .. (.5,-2) .. controls +(0,-1) and + (.5,0).. (0,-3) .. controls +(-.1,0) and +(0,-.2) .. (-.1,-2.55);
						\draw[thick,dashed] (-.1,-2.05) -- (-.1,-1.55);
						\fill[black](0,-2.95) circle[radius = 4pt];
						
						
						\draw[thick] (.35,-2.5) .. controls +(-1,0) and +(0,-1) .. (-3,0) .. controls +(0,1) and +(-1,0) .. (0,2) .. controls +(1,0) and +(0,1).. (3,0) ..controls +(0,-1) and + (1,0) .. (.6,-2.25);
						\draw[thick] (.35,-2.25) .. controls +(-1,0) and +(0,-1) .. (-2.75,0) .. controls +(0,1) and +(-1,0) .. (0,1.75) .. controls +(1,0) and +(0,1) .. (2.75,0) .. controls +(0,-1) and +(1,0) .. (.6,-2);
						\draw[thick] (.35, -1.5) ..controls +(-.5, 0) and +(0,-.5) .. (-1.75,0) .. controls +(0,.5) and +(-.5,0) .. (0,1) .. controls +(.5,0) and +(0,.5) .. (1.75,0) .. controls +(0,-.5) and +(.5,0) .. (.6,-1.25);
						\draw[thick] (.35, -1.25) .. controls +(-.5,0) and +(0,-.5) .. (-1.5,0) .. controls +(0,.5) and +(-.5,0) .. (-.25,.7) ..controls +(0,0) and +(-.15,-.15).. (.5,.75);
						
						\draw[thick] (1.75,1.55) .. controls +(.25,.25) and +(0,1) .. (3.25,0) .. controls +(0,-1) and +(1,0) .. (.6,-2.5);
						
						\draw[thick,dashed] (.75,.95) -- (1.4,1.3);
						
						\draw (-2.2,0) node[scale=1.2]{$\cdots$};
						\draw (-2.2,.4) node[scale=.8] {$(n)$};
					\end{scope}

				\end{tikzpicture}
			}
			\caption{Left: the 2-skeleton $R_n$. On the upper half of the Kirby diagram, the higher $n-1$ strands run parallel, and the lowest one goes behind all of them. In the lower half of the diagram, all $n$ strands run parallel. Right: the rational homology ball $B_n$. The $2$--handle passes over the $1$--handle $n$ times. In both Kirby diagrams shown, the $2$--handles have blackboard framing.}
			\label{fig:Rn_1_2}
		\end{figure}
		
		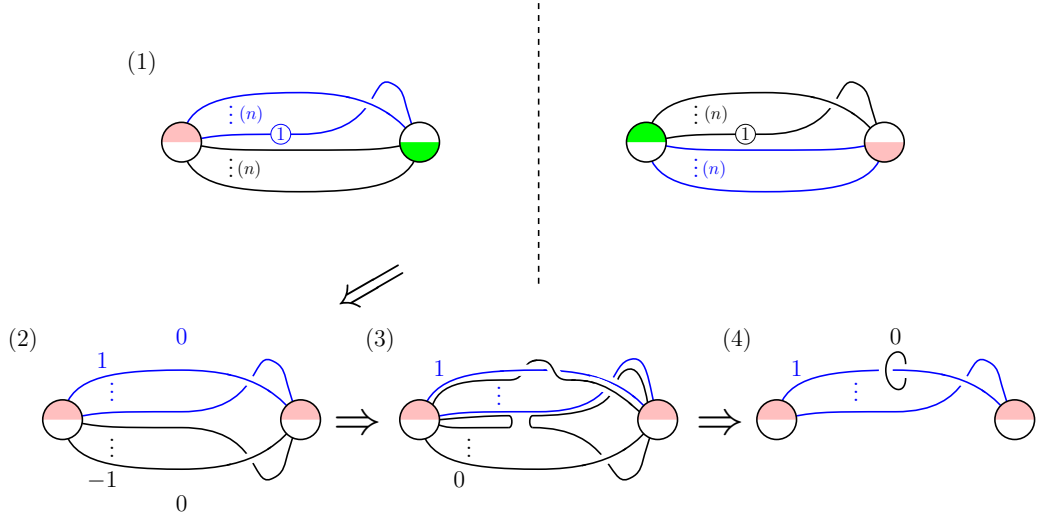
\begin{figure}
			\centering
			\resizebox{5.5in}{!}{
				
				\begin{tikzpicture}
					
					
					\begin{scope}[scale=.7, xshift = -6cm]
						
						\draw (-4,2) node {(1)};
						
						\begin{knot}[
							clip width = 7pt,
							]

							\strand[thick,color=blue] (-3,0) .. controls +(.1,1) and +(-2,0) .. (0,1.25) .. controls +(2,0) and +(-.5,.5).. (3,0);
							\strand[thick,color=blue] (-3,0) .. controls +(1,.2) and +(-2,0) .. (0,.2) .. controls +(2,0) and + (-.5,.2) .. (2.25,1.5) .. controls +(.5,-.2) and +(-.5,1) .. (3,0) ;
							
							\strand[thick] (-3,0) .. controls +(.1,-1) and +(-2,0) .. (0,-1.25) .. controls +(2,0) and +(-.1,-1).. (3,0);
							\strand[thick] (-3,0) .. controls +(1,-.2) and +(-2,0) .. (0,-.2) .. controls +(2,0) and +(-1,-.2).. (3,0);

						\end{knot}

						
						\draw (-1.75,.85) node[color=blue]{$\vdots$};
						\draw (-1.7, .7) node[anchor = west, scale=.75,color=blue] {($n$)};
						
						\draw  (-1.75,-.5) node {$\vdots$};
						\draw (-1.75,-.7) node[anchor = west, scale =.75] {($n$)};
						
						
						\fill[color=white] (-3,0) circle [radius = .5cm];
						\fill[color=white] (3,0) circle [radius = .5cm];
						
						\fill[color = pink] (-3.5,0) .. controls +(0,.2775) and +(-.27775,0) .. (-3,.5) .. controls +(.2775,0) and +(0,.2775) .. (-2.5,0);
						\fill[color = green] (3.5,0) .. controls +(0,-.2775) and +(.27775,0) .. (3,-.5) .. controls +(-.2775,0) and +(0,-.2775) .. (2.5,0);
						
						\draw[thick] (-3,0) circle [radius = .5cm];
						\draw[thick] (3,0) circle [radius = .5cm];
						
					\end{scope}
					
					\draw[thick,dashed] (0,-2.5) -- (0,2.5);
					
					\begin{scope}[xshift = 4cm, scale = .7]
						
						\begin{knot}[
							clip width = 7pt,
							]

							\strand[thick] (-3,0) .. controls +(.1,1) and +(-2,0) .. (0,1.25) .. controls +(2,0) and +(-.5,.5).. (3,0);
							\strand[thick] (-3,0) .. controls +(1,.2) and +(-2,0) .. (0,.2) .. controls +(2,0) and + (-.5,.2) .. (2.25,1.5) .. controls +(.5,-.2) and +(-.5,1) .. (3,0) ;
							
							\strand[thick,color=blue] (-3,0) .. controls +(.1,-1) and +(-2,0) .. (0,-1.25) .. controls +(2,0) and +(-.1,-1).. (3,0);
							\strand[thick,color=blue] (-3,0) .. controls +(1,-.2) and +(-2,0) .. (0,-.2) .. controls +(2,0) and +(-1,-.2).. (3,0);

						\end{knot}

						
						\draw (-1.75,.85) node{$\vdots$};
						\draw (-1.7, .7) node[anchor = west, scale=.75] {($n$)};
						
						\draw  (-1.75,-.5) node[color=blue] {$\vdots$};
						\draw (-1.75,-.7) node[anchor = west, scale =.75,color=blue] {($n$)};
						
						
						\fill[color=white] (-3,0) circle [radius = .5cm];
						\fill[color=white] (3,0) circle [radius = .5cm];
						
						\fill[color = green] (-3.5,0) .. controls +(0,.2775) and +(-.27775,0) .. (-3,.5) .. controls +(.2775,0) and +(0,.2775) .. (-2.5,0);
						\fill[color = pink] (3.5,0) .. controls +(0,-.2775) and +(.27775,0) .. (3,-.5) .. controls +(-.2775,0) and +(0,-.2775) .. (2.5,0);
						
						\draw[thick] (-3,0) circle [radius = .5cm];
						\draw[thick] (3,0) circle [radius = .5cm];
						
					\end{scope}
					
					\draw (-3,-2.5) node[scale=2,rotate = 210] {$\Longrightarrow$};
					
					\begin{scope}[scale=.7, xshift = -9cm, yshift = -7cm]
						\draw (-4,2) node {(2)};
						
						\begin{knot}[
							clip width = 7pt,
							]

							\strand[thick,color=blue] (-3,0) .. controls +(.1,1) and +(-2,0) .. (0,1.25) .. controls +(2,0) and +(-.5,.5).. (3,0);
							\strand[thick,color=blue] (-3,0) .. controls +(1,.2) and +(-2,0) .. (0,.2) .. controls +(2,0) and + (-.5,.2) .. (2.25,1.5) .. controls +(.5,-.2) and +(-.5,1) .. (3,0) ;

						\end{knot}

						
						\begin{scope}[yscale=-1]
							
							\begin{knot}[
								clip width = 7pt,
								]

								\strand[thick] (-3,0) .. controls +(.1,1) and +(-2,0) .. (0,1.25) .. controls +(2,0) and +(-.5,.5).. (3,0);
								\strand[thick] (-3,0) .. controls +(1,.2) and +(-2,0) .. (0,.2) .. controls +(2,0) and + (-.5,.2) .. (2.25,1.5) .. controls +(.5,-.2) and +(-.5,1) .. (3,0) ;

							\end{knot}
							
						\end{scope}
						
						\draw (-1.75,-.55) node[]{$\vdots$};
						\draw (-1.75,.85) node[color=blue]{$\vdots$};
						\draw (-2,1.1) node[anchor=south, color=blue] {$0$};
						\draw (-2,-1.1) node[anchor=north] {$0$};
						
						
						\fill[color=white] (-3,0) circle [radius = .5cm];
						\fill[color=white] (3,0) circle [radius = .5cm];
						
						\fill[color = pink] (-3.5,0) .. controls +(0,.2775) and +(-.27775,0) .. (-3,.5) .. controls +(.2775,0) and +(0,.2775) .. (-2.5,0);
						\fill[color = pink] (3.5,0) .. controls +(0,.2775) and +(.27775,0) .. (3,.5) .. controls +(-.2775,0) and +(0,.2775) .. (2.5,0);
						
						\draw[thick] (-3,0) circle [radius = .5cm];
						\draw[thick] (3,0) circle [radius = .5cm];
						
					\end{scope}
					
					\draw (-3.2,-5) node[scale=2] {$\Rightarrow$};
					
					
					\begin{scope}[scale=.7, xshift =0cm, yshift = -7cm]
						
						\draw (-4,2) node {(3)};
						
						\begin{knot}[
							clip width = 7pt,
							flip crossing/.list={3,4}
							]

							\strand[thick,color=blue] (-3,0) .. controls +(.25,1) and +(-2,0) .. (0,1.25) .. controls +(2,0) and +(-.5,.5).. (3,0);
							\strand[thick,color=blue] (-3,0) .. controls +(1,.2) and +(-2,0) .. (0,.2) .. controls +(2,0) and + (-.5,-.2) .. (2.25,1.5) .. controls +(.5,.2) and +(-.25,1) .. (3,0) ;
							
							
							\strand[thick] (-.15,-.2) .. controls +(-.1,0) and +(-.1,0) .. (-.15, .1) .. controls +(2.5,0) and +(-.5,-.25) .. (2.2,1.25) .. controls + (.5,.25) and + (0, 0) .. (2.8,0);

							\strand[thick] (-.75,-.2) .. controls +(.1,0) and +(.1,0) .. (-.75,.1) .. controls +(-1,0) and + (-1,-.1) .. (-2.5,0);

							\strand[thick] (-2.73,0) .. controls +(0,1) and +(-1,0) .. (0, 1)  ..controls +(1,0) and +(-1,1) .. (2.75,0);

						\end{knot}
						
						
						\begin{scope}[yscale=-1]
							
							\begin{knot}[
								clip width = 7pt,
								]

								\strand[thick] (-3,0) .. controls +(.1,1) and +(-2,0) .. (0,1.25) .. controls +(2,0) and +(-.5,.5).. (3,0);

								\strand[thick] (-3,0) .. controls +(1,.25) and +(-2,0) .. (-.75,.2);
								
								\strand[thick] (-.15,.2) .. controls +(2,0) and + (-.5,.2) .. (2.25,1.5) .. controls +(.5,-.2) and +(-.5,1) .. (3,0) ;

							\end{knot}

						\end{scope}

						\draw (-1.75,-.55) node[]{$\vdots$};
						\draw (-1,.75) node[color=blue]{$\vdots$};
						\draw (-2.5,.9) node[anchor=south, color=blue] {$0$};
						\draw (-2,-1.1) node[anchor=north] {$0$};
						
						
						\fill[color=white] (-3,0) circle [radius = .5cm];
						\fill[color=white] (3,0) circle [radius = .5cm];
						
						\fill[color = pink] (-3.5,0) .. controls +(0,.2775) and +(-.27775,0) .. (-3,.5) .. controls +(.2775,0) and +(0,.2775) .. (-2.5,0);
						\fill[color = pink] (3.5,0) .. controls +(0,.2775) and +(.27775,0) .. (3,.5) .. controls +(-.2775,0) and +(0,.2775) .. (2.5,0);
						
						\draw[thick] (-3,0) circle [radius = .5cm];
						\draw[thick] (3,0) circle [radius = .5cm];
						
					\end{scope}
					
					\draw (3.2,-5) node[scale=2] {$\Rightarrow$};
					
					\begin{scope}[scale=.7, xshift = 9cm, yshift = -7cm]
						
						\draw (-4,2) node {(4)};
						
						\begin{knot}[
							clip width = 7pt,
							flip crossing/.list={3,4}
							]

							\strand[thick,color=blue] (-3,0) .. controls +(.1,1) and +(-2,0) .. (0,1.25) .. controls +(2,0) and +(-.5,.8).. (3,0);
							\strand[thick,color=blue] (-3,.25) .. controls +(1,0) and +(-2,0) .. (0,.6) .. controls +(2,0) and +(-1,0).. (3,.25);

							\strand[thick,color=blue] (-3,0) .. controls +(1,.2) and +(-2,0) .. (0,.2) .. controls +(2,0) and + (-.5,.2) .. (2.25,1.5) .. controls +(.5,-.2) and +(-.5,1) .. (3,0) ;

						\end{knot}
						
						
						\fill[white](-.25,1.25) circle[radius = 5pt];
						\fill[white](-.25,.6) circle[radius = 5pt];
						\draw[thick] (.25,1.4) .. controls +(-.05,.5) and +(0,.5) .. (-.25,1.25) .. controls + (0,-.5) and +(-.3,0) .. (0,.35) .. controls +(.1,0) and + (-.05,-.1) .. (.25,.45); 
						\draw[thick] (.25,.7) .. controls +(.02,.1) and + (.02,-.1) .. (.25,1.15);


						\draw (-1,1) node[color=blue,scale=.8]{$\vdots$};
						\draw (0,1.7) node[anchor=south] {$0$};
						\draw (-2.5,.9) node[anchor=south, color=blue] {$0$};
						
						
						\fill[color=white] (-3,0) circle [radius = .5cm];
						\fill[color=white] (3,0) circle [radius = .5cm];
						
						\fill[color = pink] (-3.5,0) .. controls +(0,.2775) and +(-.27775,0) .. (-3,.5) .. controls +(.2775,0) and +(0,.2775) .. (-2.5,0);
						\fill[color = pink] (3.5,0) .. controls +(0,.2775) and +(.27775,0) .. (3,.5) .. controls +(-.2775,0) and +(0,.2775) .. (2.5,0);
						
						\draw[thick] (-3,0) circle [radius = .5cm];
						\draw[thick] (3,0) circle [radius = .5cm];
						
						\draw (-4,-2.5) node[scale=2,rotate = 240] {$\Longrightarrow$};
						
					\end{scope}
					

					\begin{scope}[scale=.7, xshift = 2cm, yshift = -13cm]
						
						\draw (-4,2) node {(5)};
						
						\begin{knot}[
							clip width = 7pt,
							flip crossing/.list={3,4}
							]

							\strand[thick,color=blue] (-3,0) .. controls +(.1,1) and +(-2,0) .. (0,1.25) .. controls +(2,0) and +(-.5,.8).. (3,0);
							\strand[thick,color=blue] (-3,.25) .. controls +(1,0) and +(-2,0) .. (0,.6) .. controls +(2,0) and +(-1,0).. (3,.25);

							\strand[thick,color=blue] (-3,0) .. controls +(1,.2) and +(-2,0) .. (0,.2) .. controls +(2,0) and + (-.5,.2) .. (2.25,1.5) .. controls +(.5,-.2) and +(-.5,1) .. (3,0) ;

						\end{knot}
						
						
						\fill[white](-.25,.2) circle[radius = 5pt];
						\draw[thick] (-.75,.3) ..controls +(0,.2) and +(0,.3) .. (-.25,.2) .. controls +(0,-.3) and +(0,-.2).. (-.75,.1);


						\draw (-1,1) node[color=blue,scale=.8]{$\vdots$};
						\draw (-.5,-.2) node[anchor=north] {$0$};
						\draw (-2.5,.9) node[anchor=south, color=blue] {$0$};
						
						
						\fill[color=white] (-3,0) circle [radius = .5cm];
						\fill[color=white] (3,0) circle [radius = .5cm];
						
						\fill[color = pink] (-3.5,0) .. controls +(0,.2775) and +(-.27775,0) .. (-3,.5) .. controls +(.2775,0) and +(0,.2775) .. (-2.5,0);
						\fill[color = pink] (3.5,0) .. controls +(0,.2775) and +(.27775,0) .. (3,.5) .. controls +(-.2775,0) and +(0,.2775) .. (2.5,0);
						
						\draw[thick] (-3,0) circle [radius = .5cm];
						\draw[thick] (3,0) circle [radius = .5cm];
						
					\end{scope}

				\end{tikzpicture}
			}
			\caption{The orientation double cover of $R_n$.}
			\label{fig:Rn_2_2}
		\end{figure}
		
	\end{example}

	\subsection{Topology of nonorientable SBLFs}\label{sec:SBLF_topology} 
	We proceed to describe the handle diagrams for nonorientable SBLFs. For a thorough account of the orientable case and details on round $2$--handle attachments, we refer the reader to \cite{baykurPJM}.

	By the assumptions (i) and (ii) of Definition~\ref{def:SBLF} for a simplified broken Lefschetz fibration $f \colon X \to S^2$, the \emph{round locus} $f(Z_f)$, when nonempty, is an embedded circle with an annulus neighborhood $A$, splitting the base as $S^2=D^2_+ \cup A \cup D^2_-$. This yields a particular decomposition of $X= X_+ \cup X_0 \cup X_-$, where $X_\pm$ is a (achiral) Lefschetz fibration over the disk $D^2_\pm$, and $X_0$ is a cobordism between their boundary fibrations, prescribed by an $S^1$--parameter of $1$--handle attachments to the regular fibers over $ \partial D^2_-$. Note that by (iv), $X_- \to D^2_-$ is a \emph{trivial} fibration. Furthermore, by (ii) and (iii), we have  the following possibilities for the diffeomorphism types of the regular fibers $F_\pm$ over $D^2_\pm$: 
	
	\begin{enumerate}
		\item $F_+ = \Sigma_g$ and $F_- = \Sigma_{g-1}$, when $X$ is orientable, and 
		\item $F_+ = N_k$ and $F_- = N_{k-2}$ or $\Sigma_{g-1}$ for $k=2g$, when $X$ is nonorientable.
	\end{enumerate}
	It is a good mental exercise to check that these are all the possible cases. We refer to $g$ (in the orientable case) or $k$ (in the nonorientable case) as the \emph{genus} of the SBLF $f \colon X \to S^2$. We are now ready to describe a handle diagram of a nonorientable SBLF, which we do in three steps. 
	
	We start with the usual handle diagram for the Lefschetz fibration $X_+ \to D^2_+$. This is obtained from the standard handle decomposition for $F_+ \x D^2 \cong N_k \x D^2$ by attaching a $2$--handle with fiber framing $\pm 1$ corresponding to each Lefschetz singularity \cite[Section 8]{Gompf-Stipsicz}. Even though these attaching circles, namely the \emph{vanishing cycles} $\{c_1, \ldots, c_n\}$ of the Lefschetz thimbles, are necessarily two-sided curves on $N_k$, without a canonical orientation to choose, there is no distinction we can make between $+1$ and $-1$ framing. See \cite{Miller-Ozbagci} for a more detailed discussion of these caveats for nonorientable Lefschetz fibrations. 
	
	Next, we attach a round $2$--handle to $\partial X_+$, as prescribed by the indefinite fold $Z_f$, to obtain a handle decomposition for $X_+ \cup X_0$. This consists of a $2$--handle $h_2$ attached to the fiber along a $2$--sided curve $c$ with zero framing, followed by a $3$--handle $h_3$ that goes over the $2$--handle geometrically twice. By our assumption on the connectivity of the fibers, $c$ is nonseparating. It is possible that $\nu(Z_f)$ is a copy of $S^1 \twprod D^3$ in $X$, in which case we have a twisted $3$--handle attachment. This necessarily occurs when $F_-$ is orientable.
	
	The remaining piece $X_-$ has the standard handle decomposition for $F_- \times D^2$. When added upside down to the handle decomposition of $X_+ \cup X_0$, this amounts to attaching a single $2$--handle $h'_2$, several $3$--handles (namely, $k-2$ of them), and a $4$--handle. Per the nonorientable Laudenbach--Po\'{e}naru theorem \cite{cesardesa:thesis, Miller-Naylor}, there is a unique way to extend a handlebody over $3$-- and $4$--handles, up to diffeomorphism. Therefore, it suffices to draw the Kirby diagram for $X_+ \cup h_2 \cup h'_2$, which entirely captures the diffeomorphism type of $X$, as well as the monodromy of the fibration by the ordered tuple of circles $(c; c_1, \ldots, c_n)$. Notice how we threw the $3$--handle $h_3$ of the round $2$--handle into the pile of the other $3$--handles here; this is because one can always attach the lower index handles (here $h'_2$) before the higher index ones. 
	
	The process above outlines an explicit construction for SBLF Kirby diagrams. A couple of remarks are in order. As in the case of orientable (SB)LFs, there is no canonical way to draw $h'_2$ without more information on the fibration. For instance, if the SBLF has a section, the attaching circle of $h'_2$ would be a certain meridian to the attaching circle of the unique $2$--handle of $F_+ \x D^2$; cf. examples in \cite{Gompf-Stipsicz, baykurPJM}. (In this case, the attaching circle of $h'_2$ doesn't go over the $1$--handles.) More generally, fiber-preserving gluings between the boundaries of $X_\pm$ and $X_0$ are unique up to isotopy when the fiber is $\Sigma_g$ with $g \geq 2$ or $N_k$ with $k \geq 3$ by the work of Earle and Eells \cite{Earle_Eells}. So although it may not be obvious how to attach $h_2'$, there is a unique gluing choice of $X_-$ in these cases. On the other hand, for lower genus cases, the various attaching maps must be taken into consideration, which manifests in nonstandard attachments of $h'_2$ to the rest of the diagrams. These nonstandard gluings will be explored further in Section \ref{sec:classification}.
	
	\begin{example}\label{eg:notall}
		Consider the handle diagram of a BLF on $\#_g S^1\twist S^3$ as shown in Figure \ref{fig:notall_eg}, which is a nonorientable analog of the \emph{step fibration in \cite{baykurPJM}.} When $g=1$, there's only one indefinite fold circle, giving an SBLF on $S^1\twist S^3$ with higher and lower genus side fibers $F_+ \cong N_4$ and $F_- \cong T^2$ and no Lefschetz singularities. By applying a nonorientable blow-up, we obtain an SBLF on $\mathbb{RP}^4$ with the same regular fibers as the SBLF on $S^1\twist S^3$, but with one additional Lefschetz singularity, attached along the twisted $1$--handle. More generally, for all $k\leq g$, we obtain a BLF on $(\#_{g-k} S^1\twist S^3) \# (\#_k \mathbb{RP}^4)$ by applying $k$ nonorientable blow-ups to $\#_g S^1\twist S^3$.
		
		\begin{figure}
			\centering
			\resizebox{5.5in}{!}{
				\begin{tikzpicture}
					\begin{scope}[scale=.5]
                    
						\begin{knot}[
							clip width = 7pt,
							flip crossing/.list={1}
							]
							\strand[thick] (.5,3) -- (2,3) .. controls +(1,0) and +(0,1) .. (3,2) -- (3,.5);
							\strand[thick, rotate = 90] (.5,3) -- (2,3) .. controls +(1,0) and +(0,1) .. (3,2) -- (3,.5);
							\strand[thick, rotate = 180](.5,3) -- (2,3) .. controls +(1,0) and +(0,1) .. (3,2) -- (3,.5);
							
							\strand[thick,color = red] (.4,3) -- (.4,-3);
							\strand[thick,color = red] (-.4,3) -- (-.4,-3);

							\strand[thick] ({-3+.25*sqrt(3)},.25) .. controls +(1,0) and +(-3,0) .. (0,4) .. controls +(3,0) and +(-1,0).. ({3-.25*sqrt(3)},.25);
						\end{knot}
						
						\draw (-1.3,-1.5) node[color=red]{$1$};
						\draw (1.3, -1.5) node[color=red]{$-1$};
						\draw (0,4.5) node{$1$};
						
						\filldraw[thick,fill=white] (-3,0) circle [radius = .5cm];
						\filldraw[thick,fill=white] (3,0) circle [radius = .5cm];
						\filldraw[thick,fill=white] (0,3) circle [radius = .5cm];
						\filldraw[thick,fill=white] (0,-3) circle [radius = .5cm];
						
						\begin{scope}[xshift=8cm]
							
							\begin{knot}[
								clip width = 7pt,
								flip crossing/.list={}
								]
								\strand[thick] (.5,3) -- (2,3) .. controls +(1,0) and +(0,1) .. (3,2) -- (3,.5);
								\strand[thick, rotate = 90] (.5,3) -- (2,3) .. controls +(1,0) and +(0,1) .. (3,2) -- (3,.5);

								\strand[thick,color = blue] (0,2.5) -- (0,-2.5);
								
								\strand[thick,color=red] (-3,.4) -- (3,.4);
								\strand[thick,color=red] (-3,-.4) -- (3,-.4);
								
							\end{knot}
							
							\filldraw[fill=white,draw=red] (-2, -.4) circle[radius=.25cm] (-2,-.4) node[scale=.6,color=red]{$1$};
							
							\draw (.5,2) node[color=blue] {$0$};
							
							\filldraw[thick,fill=white] (-3,0) circle [radius = .5cm];
							\filldraw[thick,fill=white] (3,0) circle [radius = .5cm];
							\filldraw[thick,fill=white] (0,3) circle [radius = .5cm];
							\filldraw[thick,fill=white] (0,-3) circle [radius = .5cm];
							
							\fill[color = pink] (-3.5,0) .. controls +(0,.2775) and +(-.27775,0) .. (-3,.5) .. controls +(.2775,0) and +(0,.2775) .. (-2.5,0);
							\fill[color = pink] (3.5,0) .. controls +(0,-.2775) and +(.27775,0) .. (3,-.5) .. controls +(-.2775,0) and +(0,-.2775) .. (2.5,0);
							
							\draw[thick] (-3,0) circle [radius = .5cm];
							\draw[thick] (3,0) circle [radius = .5cm];
							\draw[thick] (0,3) circle [radius = .5cm];
							\draw[thick] (0,-3) circle [radius = .5cm];
							
						\end{scope}
						
						\begin{scope}[xshift=17cm]
							
							\begin{knot}[
								clip width = 7pt,
								flip crossing/.list={}
								]
								\strand[thick] (.5,3) -- (2,3) .. controls +(1,0) and +(0,1) .. (3,2) -- (3,.5);
								\strand[thick, rotate = 90] (.5,3) -- (2,3) .. controls +(1,0) and +(0,1) .. (3,2) -- (3,.5);

								\strand[thick,color = blue] (0,2.5) -- (0,-2.5);
								
								\strand[thick,color=red] (-3,.4) -- (3,.4);
								\strand[thick,color=red] (-3,-.4) -- (3,-.4);
								
							\end{knot}
							
							\filldraw[fill=white,draw=red] (-2, -.4) circle[radius=.25cm] (-2,-.4) node[scale=.6,color=red]{$1$};
							
							\draw (.5,2) node[color=blue] {$0$};
							
							\filldraw[thick,fill=white] (-3,0) circle [radius = .5cm];
							\filldraw[thick,fill=white] (3,0) circle [radius = .5cm];
							\filldraw[thick,fill=white] (0,3) circle [radius = .5cm];
							\filldraw[thick,fill=white] (0,-3) circle [radius = .5cm];
							
							\fill[color = pink] (-3.5,0) .. controls +(0,.2775) and +(-.27775,0) .. (-3,.5) .. controls +(.2775,0) and +(0,.2775) .. (-2.5,0);
							\fill[color = pink] (3.5,0) .. controls +(0,-.2775) and +(.27775,0) .. (3,-.5) .. controls +(-.2775,0) and +(0,-.2775) .. (2.5,0);
							
							\draw[thick] (-3,0) circle [radius = .5cm];
							\draw[thick] (3,0) circle [radius = .5cm];
							\draw[thick] (0,3) circle [radius = .5cm];
							\draw[thick] (0,-3) circle [radius = .5cm];
							
						\end{scope}
						
						\begin{scope}[xshift=26cm]
							
							\begin{knot}[
								clip width = 7pt,
								flip crossing/.list={2,3,4}
								]
								\strand[thick] (.5,3) -- (2,3) .. controls +(1,0) and +(0,1) .. (3,2) -- (3,.5);
								\strand[thick, rotate = 90] (.5,3) -- (2,3) .. controls +(1,0) and +(0,1) .. (3,2) -- (3,.5);
								\strand[thick, rotate = -90] (.5,3) -- (2,3) .. controls +(1,0) and +(0,1) .. (3,2) -- (3,.5);

								\strand[thick,color = blue] (0,2.5) -- (0,-2.5);
							\end{knot}
							
							\draw (.5,2) node[color=blue] {$0$};
							
							\filldraw[thick,fill=white] (-3,0) circle [radius = .5cm];
							\filldraw[thick,fill=white] (3,0) circle [radius = .5cm];
							\filldraw[thick,fill=white] (0,3) circle [radius = .5cm];
							\filldraw[thick,fill=white] (0,-3) circle [radius = .5cm];
							
							\fill[color = pink] (-3.5,0) .. controls +(0,.2775) and +(-.27775,0) .. (-3,.5) .. controls +(.2775,0) and +(0,.2775) .. (-2.5,0);
							\fill[color = pink] (3.5,0) .. controls +(0,-.2775) and +(.27775,0) .. (3,-.5) .. controls +(-.2775,0) and +(0,-.2775) .. (2.5,0);
							
							\draw[thick] (-3,0) circle [radius = .5cm];
							\draw[thick] (3,0) circle [radius = .5cm];
							\draw[thick] (0,3) circle [radius = .5cm];
							\draw[thick] (0,-3) circle [radius = .5cm];
							
						\end{scope}
						
						\draw[thick] (.5,-3) -- (7.5,-3);
						\draw[thick] (3,-.5) .. controls +(0,-1) and +(-1,0).. (4,-1.5) .. controls +(1,0) and +(0,-1) ..  (5,-.5);
						\draw[thick] (11,-.5) .. controls +(0,-1) and +(-.5,0).. (11.5,-1.5);
						\draw[thick] (8.5,-3) -- (11.5,-3);
						\draw[thick] (14,-.5) .. controls +(0,-1) and +(.5,0) .. (13.5,-1.5);
						\draw[thick](16.5,-3) -- (13.5,-3);
						
						\draw (12.6,-2.2) node[scale=1.5] {$\cdots$};
						
						\begin{scope}[xshift = 9cm]
							\draw[thick] (11,-.5) .. controls +(0,-1) and +(-.5,0).. (11.5,-1.5);
							\draw[thick] (8.5,-3) -- (11.5,-3);
							\draw[thick] (14,-.5) .. controls +(0,-1) and +(.5,0) .. (13.5,-1.5);
							\draw[thick](16.5,-3) -- (13.5,-3);
							
							\draw (12.6,-2.2) node[scale=1.5] {$\cdots$};
						\end{scope}

						\draw (-3.2,3.2) node{$0$};
						
						\draw (17,-5.7) node[anchor=north,scale=.8] {$g$ of these};
						
						\draw [decorate,
						decoration = {brace,mirror, amplitude=10pt}, thick] (4.5,-5) --  (29.5,-5);
						
						\draw [decorate,
						decoration = {brace,mirror, amplitude=10pt}, thick] (4.5,-3.5) --  (20.5,-3.5);
						
						\draw(12.5, -4.2) node[scale=.8, anchor=north] {$k$ of these};
						
						\draw (27, 4.5) node {$\cup\, \textcolor{blue}{g\tilde h^3}, 2h^3, h^4$};
						
					\end{scope}

				\end{tikzpicture}
			}
			\caption{A BLF on $(\#_{g-k} S^1\twist S^3) \# (\#_k \mathbb{RP}^4)$. Figure \ref{fig:notall_htpy} shows the base diagram of this BLF when $k=0$.}
			\label{fig:notall_eg}
		\end{figure}
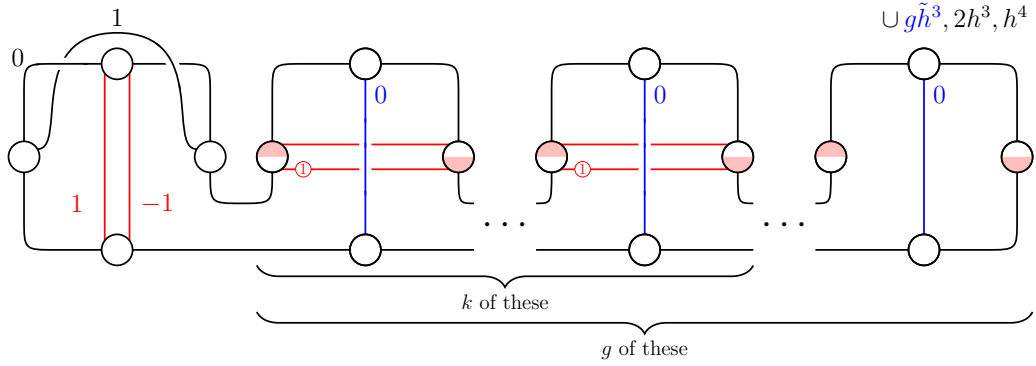

	\end{example}

	\subsection{Mapping class group factorizations for SBLFs}\label{sec:MCGcharacterization}
	
	As a converse to the monodromy reading of a given SBLF, certain algebraic data in mapping class groups of surfaces prescribe an SBLF via its Kirby diagram. Once again, we focus on the nonorientable case in our discussion below; the orientable case is treated in \cite{baykurPJM, Baykur-Hayano:BLFandMCG}.
	
	Let $(c; c_1,...,c_n)$ be two-sided simple closed curves on $S = N_k$. Let $\M(S; c)$ denote the stabilizer subgroup of $\M(S)$, fixing $c$ setwise, possibly reversing its orientation. Let $S'$ be the surface obtained from $S$ by first cutting the latter along an annular neighborhood of $c \subset S$, and then capping off the two boundary components we get by a pair of disks. So, $S' \cong N_{k-2}$ or $\Sigma_{g-1}$ for $k=2g$. There is a well-defined homomorphism $\Phi_c \colon \M(S; c) \longrightarrow \M(S')$ induced by this operation. This may be best called the \emph{cut-cap-forget} map, as it is a composition of these three well-studied homomorphisms, induced by first cutting $S$ along an open annulus neighborhood of $c_0$, then capping the two new boundary components with disks (with marked points), and finally forgetting the marked points \cite{Farb-Margalit, baykurPJM}.
	
	Now, assume that for $\mu := t_{c_n} t_{c_{n-1}} \cdots t_{c_1}$ in $\M(S)$, we have $\mu \in \M(S; c)$ and, furthermore, that $\Phi_c(\mu) = 1$ in $\M(S')$. Then we can reverse the procedure described above and build a handle decomposition, which can be capped off with $S' \times D^2$ to obtain a nonorientable SBLF $X \to S^2$. Unless the genus of $S$ is small ($k \leq 4$), this data determines the diffeomorphism type of $X$ completely.

    \enlargethispage{0.3in}
	\begin{remark} \label{rk:essential}
		If $c$ is not an \emph{essential} two-sided curve on a compact surface $S$, that is, if it bounds a disk or a M\"{o}bius band, the Dehn twist $t_c=1$ in $\M(S)$.
	\end{remark}

	\section{Simplified BLFs and trisections on nonorientable $4$--manifolds} \label{sec:SBLFs_and_trisections}
	
	Here we prove the existence of simplified broken Lefschetz fibrations and simplified trisections on nonorientable $4$--manifolds and present some related results.

	\subsection{Existence of SBLFs} We will argue the existence of SBLFs on arbitrary nonorientable $4$--manifolds by adapting Baykur and Saeki's proof in \cite{Baykur-Saeki:TAMS}. It is worth noting that most other proofs of the existence of (S)BLFs on orientable $4$--manifolds in the literature rely on results specific to the orientable case, such as the use of Eliashberg's classification of tight contact structures and Giroux's correspondence to match boundary open books of singular fibrations over disks; cf. \cite{Gay-Kirby:BLF}. Whereas in \cite{Baykur-Saeki:TAMS}, the construction of an SBLF on an arbitrary closed orientable $4$--manifold is given via topological modifications of generic maps. The local models for (indefinite) generic maps are unoriented, and orientability of the ambient $4$--manifold plays no essential role for carrying out these modifications. Therefore, the proof of \cite[Theorem 1.1]{Baykur-Saeki:TAMS}, which we outline below, applies to nonorientable $4$--manifolds as well. (In fact, the same arguments also generalize to maps from higher dimensional manifolds  \cite{saeki:simplifying-general}.)   We direct the interested reader to \cite{Baykur-Saeki:TAMS} for further details.
	
	In Section~\ref{sec:singularity_types}, we listed three types of singularities: Lefschetz, fold, and cusp. In what follows, a singular fibration $f \colon X \to S^2$ is called an \emph{indefinite fibration} if it is indefinite generic outside a finite collection of Lefschetz singularities. Let us denote by $Z_f$ the $1$--dimensional singular locus, which now includes cusps as well, and by $C_f$ the $0$--dimensional locus consisting of Lefschetz singularities. After a small perturbation of $f$, we may assume that $f|_{Z_f \cup C_f}$ is immersed, with at most double points along fold points. (It is smooth everywhere except at cusp points.) We decorate each indefinite fold circle with a transverse arrow indicating the direction of the fiberwise $2$--handle attachment—that is, toward the lower-genus side. A \emph{base diagram} of an indefinite fibration consists of the singular image $f(Z_f)$ drawn on $\mathbb{R}^2 = S^2 \setminus \{\infty\}$ with these decorations. (On a couple of occasions, where we more generally work with generic maps to a surface, we will also include definite fold circles in the base diagram, with an arrow pointing in the direction of the fiber-wise $0$--handle attachment.) Our proof of Theorem~\ref{thm:SBLFexistence}, mirroring that of Baykur and Saeki, will consist of a sequence of modifications of the base diagram that can be realized by homotopies of the singular fibrations \cite{Baykur-Saeki:TAMS}.

	\begin{proof}[Proof of Theorem \ref{thm:SBLFexistence}.]
		Let $X$ be a closed, connected, nonorientable $4$--manifold. Starting with any generic map $f\colon X\to S^2$, we are going to homotope it to an SBLF in four  steps. We continue to denote the  map we get after each modification by $f$.
		
		\noindent \underline{\textit{Step I}}: We first eliminate the \textit{definite fold circles} applying the homotopy sequence given in \cite[Theorem 2.2]{SAEKI2019}. An index argument allows one to perform isotopies and Reidemeister type moves in the base to separate the image of each definite fold circle from the rest of the base diagram. After that, one locally homotopes each  definite circle to an indefinite fold with cusps. Regardless of the orientability of $X$, there is a unique local model for the definite fold circle whose image encloses a disk with  no other singularities. 
		
		\noindent \underline{\textit{Step II}}: Let $\mathcal{D} \subset S^2$ be a disk containing the singular image $f(Z_f \cup C_f)$ of the indefinite generic map $f\colon X \to S^2$. We  apply the sequence of homotopies from \cite[Theorem~4.1]{Baykur-Saeki:TAMS} to modify $f$ over $\mathcal{D}$ (rel boundary) so that while tracing a straight path from the center of $\mathcal D$ to $\partial \mathcal D$, the Euler characteristic of a regular fiber along that path is non-decreasing. Such indefinite fibration is called \emph{directed}. When $X$ is nonorientable, the fiberwise $1$--handle attachments corresponding to (m)any of these indefinite folds may be incompatible with orientations; even if it is attached to an orientable fiber, the $1$--handle may be nonorientable, yielding a nonorientable surface on the higher genus side.

		\noindent \underline{\textit{Step III}}: Next, applying the sequence of homotopies of \cite[Theorem~4.3]{Baykur-Saeki:TAMS}, we transform the now directed indefinite fibration $f \colon X \to S^2$ into a directed indefinite fibration with \emph{embedded} singular image. Moreover, at this step, by repeatedly applying  the \emph{flip-and-slip} move from \cite{Baykur:IMRN}, at the expense of increasing the fiber genus, we can ensure that all fiberwise $1$--handle attachments are made starting from a connected fiber, and thus, all fibers are now connected \cite[Theorem~4.3]{Baykur-Saeki:TAMS}. 
		
		\noindent \underline{\textit{Step IV}}: Finally, by applying the sequence of homotopies given in \cite[Proposition~4.4]{Baykur-Saeki:TAMS}, which repeatedly uses cusp merges of Levine (after creating cusps on the fold circles if needed), we can connect all the indefinite fold circles into a single circle, while keeping the singular image embedded. The only requirement for implementing this algorithm successfully is that any two cusped fold circles we'd like to merge meet the same fiber component so that the cusp merges can be carried out as desired; the orientability of the fibers is irrelevant.
		
		We arrive at an indefinite map $f \colon X \to S^2$ with connected $Z_f$, embedded singular image, and connected fibers. By replacing any cusp points (contained in an indefinite circle) with Lefschetz singularities and then pushing them into the higher-genus side, we obtain a simplified broken Lefschetz fibration \cite{Baykur-Saeki:TAMS}. (In fact, these two homotopies could be carried out in earlier steps, as done so in \cite{Baykur-Saeki:TAMS}.) The auxiliary choice of whether a Lefschetz singularity agrees or disagrees with the orientation of $X$ becomes irrelevant when $X$ is nonorientable and the SBLF is de facto achiral. 
		
	\end{proof}
	
	\begin{example}[$S^1\twist S^3$]\label{eg:s1s3}
		Consider $X=S^1\twist S^3$ as the quotient space 
        \[S^3\times I/(x_1,x_2,x_3,x_4,0)\sim (-x_1,x_2,x_3,x_4,1),\] where $I=[0,1]$, and let $f\colon S^3\times I\to I\times I$ be the projection map $f(x_1,x_2,x_3,x_4,t)= (x_4,t)$. Since $f(x_1,x_2,x_3,x_4,0)=f(-x_1,x_2,x_3,x_4,1)$, this maps descends to a map $\tilde f\colon X \to I\times S^1$. Postcomposing $\tilde f$ with the embedding $I\times S^1 \hookrightarrow S^2$ gives a generic map $g\colon X\to S^2$. The regular fiber over $\textrm{Int}(I\times S^1)\subset S^2$ is $S^2$, whereas we get the empty set over $S^2\setminus (I\times S^1)$. That is, $g$ is a generic map with two definite fold circles along the boundary of $S^1\times I$, each corresponding to a round $0$-handle attachment. We homotope $g$ into an SBLF following the steps in Theorem \ref{thm:SBLFexistence}. (See \cite{Baykur-Saeki:TAMS} for the definitions and justifications of these base diagram moves.) Figure \ref{fig:s1s3_homotopy1} shows how the base diagram changes. First, we replace the two definite folds with indefinite ones, so that the map now surjects onto $S^2$ and the regular fibers are $S^2$ or $Kb$. Next we perform two homotopies which realize Reideister moves on the base, so that we may flip each of the indefinite fold circles twice. Then we merge them into one connected component of indefinite folds and cusps. By again applying Reidemister homotopies, the singular image becomes embedded again. Lastly, we replace the six cusp singularities with Lefschetz singularities and push them to the higher genus side. With this, we have a nonorientable genus six  SBLF with six Lefschetz singularities.\\
			
			\begin{figure}
				\centering
                
				\resizebox{5.5in}{!} {
					
					\begin{tikzpicture}
			\draw[thick,color=red] (0,0) circle[radius = .5cm];
			\draw[thick,color=red] (0,0) circle [radius = 2cm];
			\draw[->,thick] (0,.25) -- (0,.75);
			\draw[->,thick] (0,2.25) -- (0,1.75);
			
			\draw(3,0) node[scale=2] {$\Rightarrow$};
			\draw(3,1.5) node{definite to};
			\draw(3,1) node{indefinite};
			
			\begin{scope}[xshift = 6cm]
				\draw[thick] (0,0) circle[radius = .5cm];
				\draw[thick] (0,0) circle [radius = 2cm];
				\draw[->,thick] (0,.25) -- (0,.75);
				\draw[->,thick] (0,2.25) -- (0,1.75);
				
				\draw(3,0) node[scale=2] {$\Rightarrow$};
				\draw(3,1) node {R2};
			\end{scope}
			
			\begin{scope}[xshift = 12cm]
				\draw[thick] (0,0) circle[radius = .5cm];
				\draw[thick] (0,0) circle [radius = 2cm];
				\draw[->,thick] (0,.75) -- (0,.25);
				\draw[->,thick] (0,1.75) -- (0,2.25);
				
				\draw(3,0) node[scale=2] {$=$};
			\end{scope}
			
			\begin{scope}[xshift = 18cm]
				\draw[thick] (-1,0) circle[radius = .5cm];
				\draw[thick] (1,0) circle [radius = .5cm];
				\draw[->,thick] (-1,.75) -- (-1,.25);
				\draw[->,thick] (1,.75) -- (1,.25);
				
				\draw(0,-2.5) node[scale=2] {$\Downarrow$};
				\draw (.25,-2.5) node[anchor=west] {flip};
			\end{scope}
			
			\begin{scope}[xshift=17cm, yshift = -5cm]
				\draw[thick] (-1,.5).. controls +(0,0) and +(-.5,0) .. (0,-.5) .. controls +(.5,0) and +(0,0) .. (1,.5) -- (1,-.5) .. controls +(0,0) and +(.5,0) .. (0,.5) .. controls +(-.5,0) and +(0,0) .. (-1,-.5)-- (-1,.5);
				\draw[->,thick] (0,.75) -- (0,.25);
				
				\begin{scope}[xshift=3cm]
					\draw[thick] (-1,.5).. controls +(0,0) and +(-.5,0) .. (0,-.5) .. controls +(.5,0) and +(0,0) .. (1,.5) -- (1,-.5) .. controls +(0,0) and +(.5,0) .. (0,.5) .. controls +(-.5,0) and +(0,0) .. (-1,-.5)-- (-1,.5);
					\draw[->,thick] (0,.75) -- (0,.25);
				\end{scope}
				
				\draw (-2,0) node[scale=2] {$\Leftarrow$};
				\draw (-2, 1) node {cusp merge};
			\end{scope}
			
			\begin{scope}[xshift = 12cm, yshift = -5cm]
				\draw[thick] (-2,.5).. controls +(0,0) and +(-.5,0) .. (-1,-.5) .. controls +(.5,0) and +(-.5,0) .. (0,.5) .. controls +(.5,0) and +(-.5,0) .. (1,-.5) .. controls +(.5,0) and +(0,0) .. (2,.5) -- (2,-.5) .. controls +(0,0) and +(.5,0) .. (1,.5) .. controls +(-.5,0) and +(0,0) .. (.4,-.5) .. controls +(0,0) and +(.25,0) .. (0,0) .. controls +(-.25,0) and +(0,0) .. (-.4,-.5) .. controls +(0,0) and +(.5,0) .. (-1,.5);
				
				\draw[thick] (-1,.5) .. controls +(-.5, 0) and +(0,0) .. (-2,-.5)--(-2,.5);
				\draw[->,thick] (-1,.75) -- (-1,.25);
				
				\draw (-3,0) node[scale=2] {$\Leftarrow$};
				\draw (-3,1) node {R2};
			\end{scope}
			
			\begin{scope}[xshift=6cm, yshift = -6cm]
				\draw[thick] (-2,-.5) ..controls +(0,0) and +(-2,0) .. (0,2) .. controls +(2,0) and +(0,0) .. (2,-.5);
				\draw[thick] (-2,-.5) ..controls +(0,.25) and +(0,.25) .. (-1.2,-.5) ..controls +(0,.25) and +(0,.25) .. (-.4,-.5) ..controls +(0,.25) and +(0,.25) .. (.4,-.5) ..controls +(0,.25) and +(0,.25) .. (1.2,-.5) ..controls +(0,.25) and +(0,.25) .. (2,-.5);
				\draw[->,thick] (0,1.75) -- (0,2.25);
				
				\draw (-3,1) node[scale=2] {$\Leftarrow$};
				\draw (-3, 2) node {unsink};
			\end{scope}
			
			\begin{scope}[yshift=-5.5cm]
				\draw[thick] (0,0) circle[radius=2cm];
				\draw[->,thick] (0,1.75) -- (0,2.25);
				\draw(-.5,0) node{$\times$};
				\draw(.5,0) node{$\times$};
				\draw(-.5,.5) node{$\times$};
				\draw(.5,.5) node{$\times$};
				\draw(-.5,-.5) node{$\times$};
				\draw(.5,-.5) node{$\times$};
			\end{scope}
			
		\end{tikzpicture}
	}
    
				\caption{Homotopies to $g\colon S^1\twist S^3\to S^2$, where a point is removed from $S^2$ so that the base diagrams are drawn on a plane. The $R2$ indicates that a Reidemeister type II homotopy is being performed.}
				\label{fig:s1s3_homotopy1}
			\end{figure}
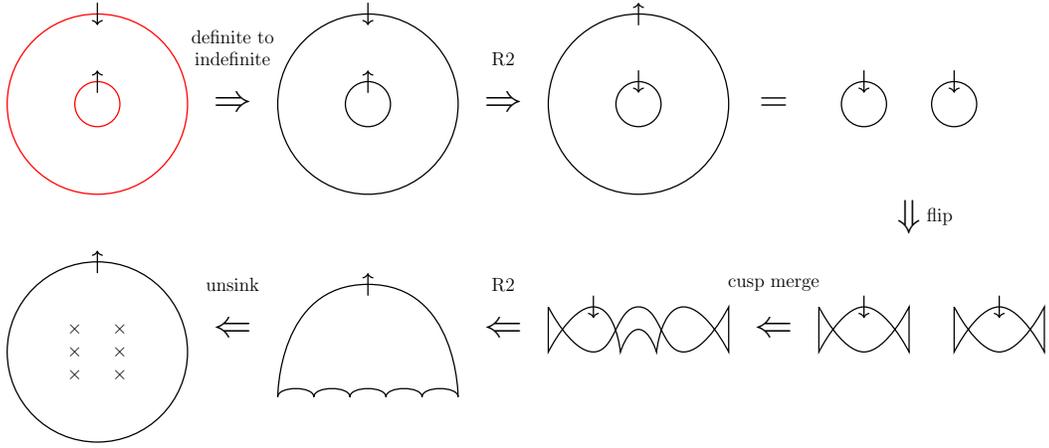

        \end{example}
	
	 \begin{example}[$(\#_k\mathbb{RP}^4)\#(\#_{g-k}S^1\twist S^3)$]
	 	Let's revisit the BLF admitted by $\#_g S^1\twist S^3$ in Figure \ref{fig:notall_eg}. This is a directed BLF with embedded round image, two Lefschetz singularities, and $N_{2g+2}$ as the highest genus fiber. Following the procedure of \cite[Propostion 4.4]{Baykur-Saeki:TAMS}, we may homotope this BLF so that its $g$ fold circles are flipped and then merged into one connected component of folds with $2g+2$ cusps. By replacing the cusps with Lefschetz singularities, we end up with an SBLF with higher and lower genus side regular fibers $N_{2g+4}$ and $N_{2g+2}$ and $2g+4$ Lefschetz singularities. Figure \ref{fig:notall_htpy} shows the sequence of base diagram moves to realize this SBLF. Note that if we first perform $k$ nonorientable blow-ups for $k\leq g$, and go through the same steps now with $k$ additional Lefschetz singularities, we obtain an SBLF for $(\#_k\mathbb{RP}^4)\#(\#_{g-k}S^1\twist S^3)$ instead. The resulting SBLF has the same regular fibers as the one for $\#_g S^1\twist S^3$, but contains $k$ additional Lefschetz singularities.
	 	
	 	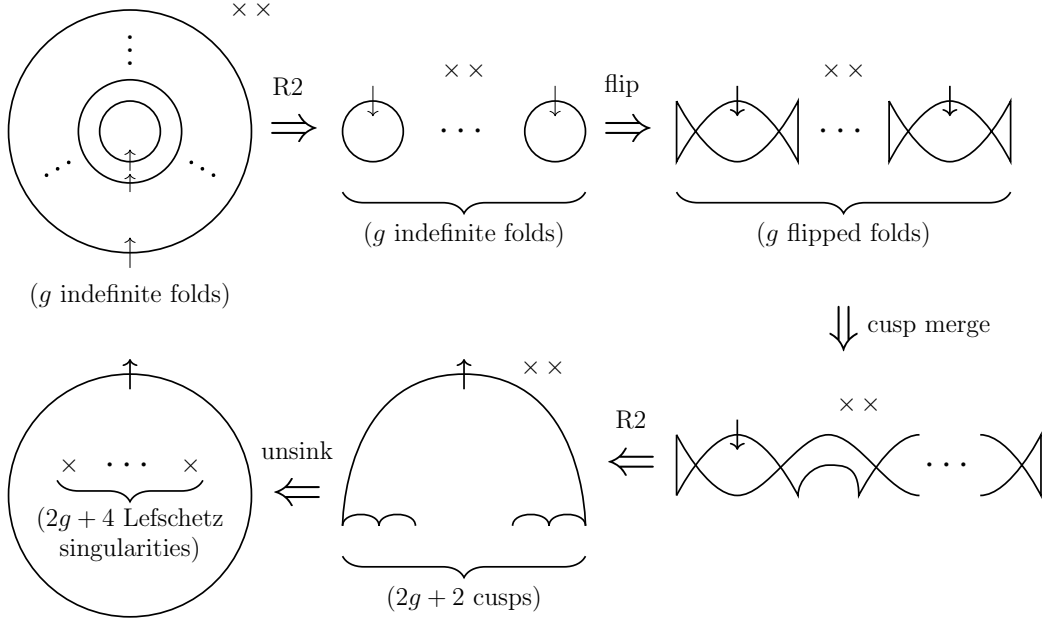
\begin{figure}
	 		\centering

	 		\resizebox{5.5in}{!}{
	 			
	 			\begin{tikzpicture}
	 				\draw[thick] (0,0) circle[radius = .5cm];
	 				\draw[thick] (0,0) circle[radius=.85cm];
	 				\draw[thick] (0,0) circle[radius=2cm];
	 				\draw (0,1.5) node[scale=1.5] {$\vdots$};
	 				\draw (1.329,-.696) node[scale=1.5, rotate=-120] {$\vdots$};
	 				\draw (-1.329,-.696) node[scale=1.5, rotate=120] {$\vdots$};
	 				
	 				\draw[->] (0,-2.25) -- (0,-1.75);
	 				\draw[->] (0, -1) -- (0,-.7);
	 				\draw[->] (0,-.65) -- (0,-.3);
	 				
	 				\draw (0,-2.7) node {($g$ indefinite folds)};
	 				\draw (2,2) node {$\times\,\times$};
	 				
	 				\draw (2.7,0) node [scale=2] {$\Rightarrow$};
	 				\draw (2.6,.75) node {R2};
	 				
	 				\begin{scope}[xshift=5.5cm]
	 					\draw[thick] (-1.5,0) circle[radius=.5cm];
	 					\draw[thick](1.5,0) circle[radius=.5cm];
	 					\draw (0,0) node[scale=1.5] {$\cdots$};
	 					\draw[->] (-1.5,.75) -- (-1.5,.25);
	 					\draw[->] (1.5,.75) -- (1.5,.25);
	 					
	 					\draw(0,1) node {$\times\, \times$};
	 					
	 					\draw [decorate,
	 					decoration = {brace,mirror, amplitude=10pt}, thick] (-2,-1) --  (2,-1);
	 					\draw (0,-1.7) node {($g$ indefinite folds)};
	 					
	 					\draw (2.7,0) node [scale=2]{$\Rightarrow$};
	 					\draw (2.6, .75) node {flip};
	 				\end{scope}
	 				
	 				\begin{scope}[xshift=10cm]
	 					\draw[thick] (-1,.5).. controls +(0,0) and +(-.5,0) .. (0,-.5) .. controls +(.5,0) and +(0,0) .. (1,.5) -- (1,-.5) .. controls +(0,0) and +(.5,0) .. (0,.5) .. controls +(-.5,0) and +(0,0) .. (-1,-.5)-- (-1,.5);
	 					\draw[->,thick] (0,.75) -- (0,.25);
	 					
	 					\draw (1.75,0) node [scale=1.5] {$\cdots$};
	 					
	 					\begin{scope}[xshift=3.5cm]
	 						\draw[thick] (-1,.5).. controls +(0,0) and +(-.5,0) .. (0,-.5) .. controls +(.5,0) and +(0,0) .. (1,.5) -- (1,-.5) .. controls +(0,0) and +(.5,0) .. (0,.5) .. controls +(-.5,0) and +(0,0) .. (-1,-.5)-- (-1,.5);
	 						\draw[->,thick] (0,.75) -- (0,.25);
	 					\end{scope}
	 					
	 					\draw (1.75,1) node {$\times\, \times$};
	 					
	 					\draw [decorate,
	 					decoration = {brace,mirror, amplitude=10pt}, thick] (-1,-1) --  (4.5,-1);
	 					\draw(1.75,-1.7) node {($g$ flipped folds)};
	 					
	 					\draw(1.75, -3.25) node[scale=2]{$\Downarrow$};
	 					\draw(2,-3.25) node [anchor =west]{cusp merge};
	 					
	 				\end{scope}
	 				
	 				\begin{scope}[xshift=12cm,yshift=-5.5cm]
	 					
	 					\draw[thick] (-3,.5).. controls +(0,0) and +(-.5,0) .. (-2,-.5) .. controls +(.5,0) and +(-.5,0) .. (-.5,.5) .. controls +(.5,0) and +(-.5,0) .. (1,-.5);
	 					\draw[thick] (-3,-.5) .. controls +(0,0) and +(-.5,0) .. (-2,.5) ..controls +(.5,0) and +(0,0) .. (-1,-.5)  .. controls +(0,.25) and +(-.25,0) .. (-.5,0) .. controls +(.25,0) and +(0,.5) .. (0,-.5).. controls +(0,0) and +(-.5,0) .. (1,.5);
	 					\draw[thick](-3,.5) -- (-3,-.5);
	 					
	 					\draw[->,thick] (-2,.75) -- (-2,.25);
	 					
	 					\draw (1.5,0) node[scale=1.5] {$\cdots$};
	 					
	 					\draw[thick] (2,.5) .. controls +(.5,0) and +(0,0).. (3,-.5) -- (3,.5) .. controls +(0,0) and +(.5,0) .. (2,-.5);
	 					
	 					\draw(0,1) node {$\times\, \times$};
	 					
	 					\draw(-3.75,0) node[scale=2]{$\Leftarrow$};
	 					\draw(-3.75,.75) node {R2};
	 					
	 				\end{scope}
	 				
	 				\begin{scope}[xshift=5.5cm,yshift=-6cm]
	 					
	 					\draw[thick] (-2,-.5) ..controls +(0,0) and +(-2,0) .. (0,2) .. controls +(2,0) and +(0,0) .. (2,-.5);
	 					\draw[thick] (-2,-.5) ..controls +(0,.25) and +(0,.25) .. (-1.4,-.5) ..controls +(0,.25) and +(0,.25) .. (-.8,-.5);
	 					
	 					\draw[thick] (.8,-.5) ..controls +(0,.25) and +(0,.25) .. (1.4,-.5) ..controls +(0,.25) and +(0,.25) .. (2,-.5);
	 					\draw[->,thick] (0,1.75) -- (0,2.25);
	 					
	 					\draw(1.3,2.1) node {$\times\,\times$};
	 					
	 					\draw [decorate,
	 					decoration = {brace,mirror, amplitude=10pt}, thick] (-2,-1) --  (2,-1);
	 					\draw(0,-1.7) node {($2g+2$ cusps)};
	 					
	 					\draw(-2.75,0) node[scale=2]{$\Leftarrow$};
	 					\draw(-2.75,1.25) node {unsink,};
                        \draw(-2.75,.75) node {push};
	 					
	 				\end{scope}
	 				
	 				\begin{scope}[yshift=-6cm]
	 					\draw[thick] (0,0) circle[radius = 2cm];
	 					
	 					\draw (-1,.5) node {$\times$};
	 					\draw (0,.5) node[scale=1.5] {$\cdots$};
	 					\draw (1,.5) node {$\times$};
	 					
	 					\draw [decorate,
	 					decoration = {brace,mirror, amplitude=10pt}, thick] (-1.2,.25) --  (1.2,.25);
	 					\draw (0,-.4) node {($2g+4$ Lefschetz};
	 					\draw (0,-.9) node {singularities)};
	 					
	 					\draw [->,thick] (0,1.75) -- (0,2.25);
	 				\end{scope}

	 			\end{tikzpicture}
	 		}

	 		\caption{Homotoping the BLF on $\#_g S^1\twist S^3$ into a SBLF. In the starting base diagram on the top left, the inner-most regular fibers are tori and the outer-most are copies of $N_{2g+2}$.}
	 		\label{fig:notall_htpy}
	 	\end{figure}
	 	\end{example}

	The following proposition shows that nearly all of the $4$--manifolds in the previous example do not admit Lefschetz fibrations or pencils. This conveys that 
	the abundance of nonorientable simplified broken Lefschetz fibrations sets them apart from honest Lefschetz fibrations.

	\begin{prop} \label{prop:notall}
		For $g \geq 2$ and $0\leq k <g$, the $4$--manifolds $(\#_k\mathbb{RP}^4)\#(\#_{g-k}S^1\twist S^3)$ do not admit a Lefschetz fibration or a pencil.
	\end{prop}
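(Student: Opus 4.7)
The plan is to suppose, toward a contradiction, that $X := (\#_k \mathbb{RP}^4) \# (\#_{g-k} S^1 \twist S^3)$ admits a Lefschetz fibration or pencil $f\colon X \to S^2$, with $g \geq 2$ and $0 \leq k < g$. First I reduce to the LF case: if $f$ is a pencil with $b$ base points, blowing them up (each blow-up contributing a $\CP$ summand) produces an LF on $X' := X \# b\,\CP$. Since $\pi_1(\CP) = 1$, we have $\pi_1(X') \cong \pi_1(X) = (\Z/2)^{\ast k} \ast F_{g-k}$, still with $k < g$ and $X'$ nonorientable.

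The generic fiber must be nonorientable: if $F = \Sigma_h$ were orientable, then the smooth surface-bundle structure on the complement of the Lefschetz critical set, combined with the orientation-preserving complex local models $(z_1,z_2)\mapsto z_1 z_2$, would make $X'$ orientable. Hence $F = N_h$. The handle decomposition of Section~\ref{sec:SBLF_topology} (with no round $2$-handle, since there are no indefinite folds) gives $\chi(X') = 4 - 2h + n$; comparing with $\chi(X') = k + b - 2g + 2$ yields $n = 2h + k + b - 2g - 2 \geq 0$. The LF presents $\pi_1(X') \cong \pi_1(N_h)/\langle\langle c_1,\dots,c_n\rangle\rangle$, where each $c_i$ is a two-sided simple closed curve on $N_h$; Grushko's theorem applied to the $g$-factor free-product decomposition of $\pi_1(X')$ forces $h \geq g$.

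The plan is then to rule out the feasible fiber genera by combining the Euler constraint with $\pi_1$ rigidity. When the Euler relation forces $h = g + 1$ (as happens whenever $k + b \leq 1$, and in particular in the representative case $k = b = 0$ corresponding to a pure LF on $\#_g\,S^1 \twist S^3$), we have $n \leq 1$, and a canceling-pair argument for this small $n$ reduces the LF to an honest $N_{g+1}$-bundle over $S^2$. By the Earle--Eells rigidity cited in Section~\ref{sec:SBLF_topology} (valid since $g+1 \geq 3$) this bundle is trivial, so $X' = N_{g+1}\times S^2$ and $\pi_1(X') = \pi_1(N_{g+1})$, a one-relator nonorientable surface group, incompatible with the required free-product form $(\Z/2)^{\ast k} \ast F_{g-k}$ whenever $k < g$.

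The hard part is ruling out the remaining cases $h \geq g + 2$, where nontrivial monodromy factorizations $\prod t_{c_i} = 1$ in $\Mod(N_h)$ could a priori exist. My approach is to combine the two-sidedness constraint --- which forces the mod-$2$ classes $[c_i]_2 \in H_1(N_h;\Z/2)$ to evaluate trivially on $w_1(N_h) = \sum_i a_i^\vee$, hence to have even weight --- with a torsion-counting argument on the abelianized surjection $\Z^{h-1}\oplus \Z/2 \twoheadrightarrow \Z^{g-k}\oplus(\Z/2)^k$. The goal is to show that with $k < g$ the target torsion $(\Z/2)^k$ cannot simultaneously absorb the image of the source generator $\sum_i a_i$ and any additional $\Z/2$-torsion produced by non-primitive $[c_i]$'s, while preserving the free rank $g-k$; supplementing this homological bookkeeping with a Lyndon--Sch\"utzenberger-style rigidity for the equation $u_1^2\cdots u_h^2 = 1$ in the free product $(\Z/2)^{\ast k}\ast F_{g-k}$ then delivers the final contradiction.
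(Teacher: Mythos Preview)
Your proposal is not a proof: you explicitly leave the cases $h \geq g+2$ open, offering only a vague plan about torsion-counting and ``Lyndon--Sch\"utzenberger-style rigidity'' with no actual argument. Even the cases you claim to handle have gaps. Your ``canceling-pair argument'' for $n \leq 1$ is not justified: when $n=1$ the single vanishing cycle $c$ satisfies $t_c = 1$, so $c$ bounds a disk or a M\"obius band, but blowing down such a node \emph{changes the manifold} (removing a $\CP$ or $\mathbb{RP}^4$ summand), so you do not conclude that $X'$ itself is an $N_{g+1}$-bundle. You also never address $h = g$, which is allowed by your Grushko bound whenever $k + b \geq 2$.

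The paper's proof bypasses all of this with a one-line idea you missed: pass to the orientation double cover. One computes that the double cover of $(\#_k \mathbb{RP}^4)\#(\#_{g-k} S^1 \twist S^3)$ is $\#_{2g-k-1}\, S^1 \times S^3$, and the hypotheses $g \geq 2$, $k < g$ force $2g-k-1 \geq 2$. Any (achiral) Lefschetz fibration or pencil on the nonorientable manifold lifts, via post-composition with the covering map, to one on the double cover. But Gompf--Stipsicz's characteristic class obstruction (\cite[Theorem~8.4.13, Corollary~8.4.14]{Gompf-Stipsicz}) rules out achiral Lefschetz fibrations and pencils on $\#_m\, S^1 \times S^3$ for $m \geq 2$. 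That is the entire argument; no case analysis on fiber genus, no surface-group combinatorics.
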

	
	\begin{proof}
		For $k\geq 1$, the orientation double cover of $\#_k \mathbb{RP}^4$ is $\#_{k-1}S^1\times S^3$. Using the fact that $(\#_k\mathbb{RP}^4)\#(\#_{g-k}S^1\twist S^3) \cong (\#_k\mathbb{RP}^4)\#(\#_{g-k}S^1\times S^3)$, $(\#_k\mathbb{RP}^4)\#(\#_{g-k}S^1\twist S^3)$ is doubly covered by $\#_{k-1}S^1\times S^3$ and two copies of $\#_{g-k}S^1\times S^3$. Hence the orientation double cover of $(\#_k\mathbb{RP}^4)\#(\#_{g-k}S^1\twist S^3)$ is $\#_{2g-k-1} \, S^1 \times S^3$. Since $\#_g \, S^1\twist S^3$ has $\#_{2g-1} S^1\times S^3$ as its orientation double cover, this extends to the $k=0$ case. The constraints $g\geq 2$ and $0\leq k<g$ imply that the orientation double cover is $\#_m S^1\times S^3$ for some $m\geq 2$. Post-composing the covering map with any Lefschetz fibration (resp. pencil) on $(\#_k\mathbb{RP}^4)\#(\#_{g-k}S^1\twist S^3)$ would yield an achiral Lefschetz fibration (resp. pencil) on the double cover. However, the characteristic class obstruction of \cite[Theorem~8.4.13]{Gompf-Stipsicz} implies that $\#_m \, S^1 \x S^3$ does not admit a Lefschetz fibration or pencil when $m\geq 2$ \cite[Corollary 8.4.14]{Gompf-Stipsicz}.
	\end{proof}
	
	\begin{remark}\label{rk:relminimal}
		If a broken Lefschetz fibration $X \to \Sigma$ contains an isolated Lefschetz singularity whose vanishing cycle $c$ bounds a disk on the regular fiber, then the corresponding singular fiber contains an embedded $2$--sphere $S$ of self-intersection $\pm 1$ that can be blown down without affecting the rest of the fibration, yielding a new broken Lefschetz fibration on the blow-down, $X' \to \Sigma$ \cite{Gompf-Stipsicz}. In a similar way, if $c$ bounds a M\"{o}bius band on the fiber, then the singular fiber contains an embedded $\mathbb{RP}^2$ with a regular neighborhood $\mathbb{RP}^2 \twist D^2$, which can be blown down without altering the rest of the fibration, resulting in a broken Lefschetz fibration on the nonorientable blow-down, $X'' \to \Sigma$ \cite{Miller-Ozbagci}. The same goes for pencils. Note that, by Remark~\ref{rk:essential}, in each case the local monodromy $t_c$ is trivial. We call a broken Lefschetz fibration \emph{relatively minimal} if it does not have any isolated (Lefschetz-type) singular fibers containing a $2$--sphere or a real projective plane. Equivalently, all of its Lefschetz vanishing cycles are essential curves. It is clear that every SBLF is a blow-up (orientable or nonorientable) of a relatively minimal SBLF.
	\end{remark}
	
	We end with a further improvement of the existence result of Theorem~\ref{thm:SBLFexistence}:

	\begin{corollary} \label{prop:good_sblf}
		Every closed, connected, nonorientable $4$--manifold admits a relatively minimal SBLF of arbitrarily large, even nonorientable genus.    
	\end{corollary}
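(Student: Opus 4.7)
The plan is to combine Theorem~\ref{thm:SBLFexistence} with iterated applications of the flip-and-slip move of \cite{Baykur:IMRN}. My key observation is that the SBLF produced by the proof of Theorem~\ref{thm:SBLFexistence} is already relatively minimal. Indeed, every Lefschetz singularity in that SBLF arises from a cusp singularity that is replaced by a Lefschetz singularity in the final step of the algorithm (since the initial generic map has no Lefschetz singularities to begin with), and the vanishing cycle of a cusp-born Lefschetz singularity is a nonseparating curve in the corresponding higher-genus fiber. By Remark~\ref{rk:essential}, such a curve is essential, so all Lefschetz vanishing cycles of the constructed SBLF are essential. Hence Theorem~\ref{thm:SBLFexistence} already yields a relatively minimal SBLF on $X$ of some nonorientable genus $k_0$.

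To reach arbitrarily large even nonorientable genus, I apply further flip-and-slip moves. Each such move introduces new cusps on a fold circle and raises the nonorientable genus of the higher-genus fiber; after replacing these new cusps with Lefschetz singularities exactly as in Step~IV of the proof of Theorem~\ref{thm:SBLFexistence}, the result is an SBLF on $X$ of larger genus with additional Lefschetz singularities whose vanishing cycles (being cusp-born) are nonseparating in the new fiber. The pre-existing essential vanishing cycles remain essential, since the flip-and-slip stabilization can be placed in a region of the fiber disjoint from their support. Iterating and choosing the number and type of flip-and-slip moves appropriately, we can arrange any sufficiently large even value of the nonorientable genus while preserving relative minimality.

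The main point requiring verification is that a cusp-born Lefschetz vanishing cycle is genuinely nonseparating in the relevant higher-genus fiber, and that pre-existing essential vanishing cycles remain essential after flip-and-slip. Both follow from the local analyses carried out in \cite{Baykur:IMRN, Baykur-Saeki:TAMS} for the oriented case, and transfer directly to the nonorientable setting because, as emphasized in Section~\ref{sec:singularity_types}, the local models for fold, cusp, and Lefschetz singularities involved in these analyses are all unoriented, and the ambient orientability of $X$ plays no essential role.
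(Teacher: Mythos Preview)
Your argument for relative minimality is correct and in fact more direct than the paper's. The observation that every Lefschetz singularity in the SBLF produced by Theorem~\ref{thm:SBLFexistence} is cusp-born, and that a cusp-born vanishing cycle always admits a geometric dual (one of the two fold vanishing cycles meeting at the cusp), hence is nonseparating and therefore essential, is a clean shortcut. The paper instead allows the SBLF to have inessential vanishing cycles and then repairs them by sliding the corresponding Lefschetz $2$--handles over the fold $2$--handle (after a preparatory flip-and-slip to make the fold vanishing cycle disjoint from the bad ones). Your route avoids this handle-slide step entirely. One small quibble: Remark~\ref{rk:essential} is not the statement you need here; the relevant fact is simply that on a closed surface a two-sided curve bounding a disk or a M\"obius band is separating, so nonseparating two-sided curves are automatically essential.

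However, there is a genuine gap concerning the \emph{even} genus claim. The flip-and-slip move raises the nonorientable genus by exactly $2$, so it preserves the parity of $k$. You assert that ``choosing the number and type of flip-and-slip moves appropriately'' one can reach any sufficiently large even value, but you never argue that the initial genus $k_0$ can be taken to be even, and there is no ``type'' of flip-and-slip that changes parity. If the SBLF coming out of Theorem~\ref{thm:SBLFexistence} happened to have odd nonorientable genus, your iteration would only ever produce odd genera. The paper handles this by a separate argument: it starts the algorithm of Theorem~\ref{thm:SBLFexistence} from a \emph{specific} generic map, namely one approximating the Pontrjagin--Thom collapse for a trivially embedded orientable surface $\Sigma_g \subset D^4 \subset X$, and then tracks how the Euler characteristic of the fiber over the central disk changes through each homotopy (only by even amounts, via definite folds and tubes). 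This forces the final higher-genus fiber $N_k$ to satisfy $\chi(N_k) \equiv \chi(\Sigma_g) \pmod 2$, i.e.\ $k$ is even. You need to supply some argument of this kind; without it the parity assertion is unproven.
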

	
	\begin{proof}

		Given a nonorientable genus $k$ SBLF, the flip-and-slip move of \cite{Baykur:IMRN} yields a new SBLF with four more Lefschez singularities and nonorientable genus $k+2$, just like how the higher genus side regular fiber changes from $\Sigma_g$ to $\Sigma_{g+1}$ in the orientable case. This is how we get to SBLFs with arbitrarily large genera. 
		
		The claim about the parity of the nonorientable genus follows from the proof of Theorem~\ref{thm:SBLFexistence}. We can start off the construction with a generic map $X \to S^2$ that approximates the map obtained from the Pontrjagin–Thom construction applied to any trivially embedded surface $\Sigma_g \subset D^4 \subset X$. When approximating this continuous map, which is defined by projecting $\nu \Sigma_g \cong \Sigma_g \times D^2 \to D^2 \cong \mathcal{D} \subset S^2$ and collapsing $X \setminus \textrm{Int}(\nu \Sigma_g)$ to a point, we can assume that the resulting generic map agrees with the initial map over $\mathcal{D}$, where it was already smooth and regular. It is then sufficient to observe that in all subsequent steps, the map over $\mathcal{D}$ is only modified by the trespassing fold circles, which change the fibers over $\mathcal{D}$ either by adding or removing $S^2$ components (in the case of definite folds) or by adding or removing tubes (in the case of indefinite folds). The fibers of the resulting SBLF will have Euler characteristic differing from that of $\Sigma_g$ by an even number, thereby justifying our claim.
		
		Now assuming $f\colon X\to S^2$ is an SBLF with even genus fibers, let $\mathcal{C}$ be the set of Lefschetz vanishing cycles for $f$ bounding disks or M\"{o}bius bands. If $\mathcal{C}=\emptyset$, we're done. If not, to proceed, we first ensure that $Z_f\neq \emptyset$ by performing a birth homotopy followed by two unsinks if needed. Let $c$ be the indefinite fold vanishing cycle, and let $F_+$ be the (higher genus) reference fiber for $f$. Let $\gamma\in \mathcal{C}$, and let $h_c$ and $h_{\gamma}$ be the $2$--handles corresponding to the vanishing cycles $c$ and $\gamma$. Suppose we slide $h_{\gamma}$ over $h_c$. If $\gamma$ and $c$ have zero geometric intersection, then the new attaching circle for $h_{\gamma}$ still traces a simple closed curve on $F_+$ and has framing $\pm 1$. We know $[\gamma]=0$ and $[c]\neq 0$ in $H_1(F_+;\Z)$, and the new vanishing cycle is represented by $[\gamma \pm c]$. It follows that the new vanishing cycle is homologically essential, so is non-separating. If $\gamma$ is geometrically disjoint from $c$ for all $\gamma\in \mathcal{C}$, we may use this trick for each $\gamma \in \mathcal C$ to obtain a relatively minimal SBLF, completing the proof. Before reaching this conclusion, we need to consider the possibility that for some $\gamma_0\in \mathcal C$, $\gamma_0$ and $c$ have non-zero geometric intersection, in which case the handleslide described above would ruin the SBLF structure, since the new attaching circle for $h_{\gamma_0}$ would not trace a simple closed curve on $F_+$. In this case, we perform a flip-and-slip homotopy, during which the higher (nonorientable) genus fiber increases by two, and the fold circle gets four new cusps. (See \cite{Hayano_R2} for a more detailed description of this homotopy.) Unsinking each of these cusps makes the fold circle non-cusped again and introduces four new Lefschetz singularities. Let $F'_+$ be the new higher genus regular fiber and let $\mathcal{C}'$ be the image of $\mathcal{C}$ in $F'_+$, i.e. the image of each Lefschetz vanishing cycle bounding a disk or M\"{o}bius band in $F'_+$. After performing the unsink moves \cite{Lekili, Baykur-Saeki:TAMS}, we may choose the reference fiber $F'_+$ such that the new indefinite fold circle $c'$ is disjoint from the curves in $\mathcal{C}'$. For this reason, for each $\gamma \in \mathcal{C}'$, we may slide $h_{\gamma}$ over $h_{c'}$, thereby replacing them all with homologically essential Lefschetz vanishing cycles. After this, the only Lefschetz vanishing cycles that could possibly bound disks or M\"{o}bius bands are the four introduced by unsinking the cusps after the flip-and-slip. We claim that these four vanishing cycles are nonseparating. To see this, first observe that they're each homologous to $[a\pm b]$, where $a$ and $b$ are a pair of geometrically dual fold vanishing cycles created during the flip-and-slip. They each have a geometric dual given by either $a$ or $b$ and so are homologically essential, hence non-separating. This confirms that there are no longer vanishing cycles bounding disks or M\"{o}bius bands, completing the proof.
		\end{proof}	
	
	Lastly, we note the following combinatorial description of nonorientable $4$--manifolds via mapping class group factorizations, building on our discussion in Subsection~\ref{sec:MCGcharacterization} and (the proof of) Corollary~\ref{prop:good_sblf}:

	\begin{corollary} \label{cor:combinatorial}
		Every closed, connected, nonorientable $4$--manifold is prescribed (up to diffeomorphism)
        by an ordered tuple of two-sided essential curves  $(c; c_1, \ldots, c_n)$ on $N_{2k}$ such that 
		\[t_{c_1} \cdots t_{c_n} \in {Ker}(\Phi_{c})\] 
		of the cut-cap-forget homomorphism  $\Phi_{c}\colon \M(N_{2k}; c)\to \M(N_{2k-2})$, for some $k\in \Z_{\geq 1}$.
	\end{corollary}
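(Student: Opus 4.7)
The plan is to extract the required tuple from an SBLF structure on $X$ via the monodromy reading discussed in Subsection~\ref{sec:MCGcharacterization}. By Corollary~\ref{prop:good_sblf}, $X$ admits a relatively minimal simplified broken Lefschetz fibration $f\colon X\to S^2$ of even nonorientable genus $2k$, for $k$ as large as we wish; in particular, we may take $k\geq 2$. Moreover, since the flip-and-slip homotopy used to pump up the genus creates an indefinite fold circle, we may assume $Z_f\neq \emptyset$, so that both a round vanishing cycle and a (possibly empty) collection of Lefschetz vanishing cycles are available as monodromy data.

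With $F_+=N_{2k}$ the higher-genus reference fiber, let $c\subset N_{2k}$ be the vanishing cycle of the connected indefinite fold circle $Z_f$, and let $c_1,\ldots,c_n$ be the Lefschetz vanishing cycles of $f$ ordered along $\partial D^2_+$. By the discussion in Subsection~\ref{sec:SBLF_topology}, the indefinite fold is modeled by a fiberwise $1$--handle attached along a two-sided curve $c$, and in a nonorientable SBLF each Lefschetz vanishing cycle $c_i$ is a two-sided simple closed curve. Since all fibers of $f$ are connected, $c$ is nonseparating and hence essential on $N_{2k}$; since $f$ is relatively minimal, Remark~\ref{rk:relminimal} guarantees that each $c_i$ is essential as well.

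It remains to verify the algebraic condition. The monodromy of the achiral Lefschetz fibration $X_+\to D^2_+$ around $\partial D^2_+$ is the product $\mu := t_{c_1}\cdots t_{c_n}\in\M(N_{2k})$. The round $2$--handle cobordism $X_0$ is built by gluing in $N_{2k}\times I$ with a $1$--handle attached along the $c$-curves on $\{0,1\}$-fibers, so the total monodromy on $\partial X_+$ must preserve $c$ setwise (possibly reversing orientation), i.e.\ $\mu\in\M(N_{2k};c)$. Then $X_-\cong N_{2k-2}\times D^2$ extends this piece trivially over $D^2_-$, which forces the class obtained from $\mu$ by cutting $N_{2k}$ along an annular neighborhood of $c$, capping off the two resulting boundary circles with disks, and forgetting the marked points to be trivial. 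This is exactly the statement that $\Phi_c(\mu)=1$, i.e.\ $\mu\in\operatorname{Ker}(\Phi_c)$.

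The main (very mild) obstacle is ensuring that the tuple actually reconstructs $X$ up to diffeomorphism and not merely its monodromy representation: this requires the uniqueness of the fiber-preserving gluing of $X_\pm$ to $X_0$, which holds when $2k\geq 3$ by the Earle--Eells result recalled in Subsection~\ref{sec:SBLF_topology}. Since Corollary~\ref{prop:good_sblf} lets us take $k\geq 2$, this uniqueness applies and the corollary follows.
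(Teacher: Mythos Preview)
Your approach is exactly the intended one: read off the monodromy data of a relatively minimal even-genus SBLF produced by Corollary~\ref{prop:good_sblf}, and invoke the reconstruction discussed in Subsection~\ref{sec:MCGcharacterization}.

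There is one minor gap in the genus bound. You claim Earle--Eells uniqueness of the fiber-preserving gluings holds when $2k\geq 3$, but this condition only controls the gluing along $\partial X_+$, where the fiber is $F_+=N_{2k}$. The gluing of $X_-$ along $\partial_- X_0$ is governed by the \emph{lower}-genus fiber $F_-$, which is $N_{2k-2}$ (or $\Sigma_{k-1}$); for uniqueness there you need $2k-2\geq 3$ (resp.\ $k-1\geq 2$), i.e.\ $k\geq 3$. This is precisely why Subsection~\ref{sec:MCGcharacterization} flags the cases $k\leq 4$ (in that subsection's notation, the nonorientable genus of $S=F_+$) as exceptional: when $F_+=N_4$ the lower fiber is the Klein bottle, and $\pi_1(\Diff(Kb))\cong\Z$ gives genuinely distinct gluings, as exploited throughout Section~\ref{sec:classification}. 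The fix is trivial---Corollary~\ref{prop:good_sblf} lets you take $k$ as large as you like, so simply choose $k\geq 3$---but as written your final sentence does not close the argument.
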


	\begin{remark}
    	With some care, the correspondence between SBLFs and the monodromy data can be upgraded to a bijection between isomorphism classes of nonorientable genus--$2k$ SBLFs on a given $X$ and tuples $(c; c_1, \ldots, c_n)$ on $N_{2k}$; see \cite{Baykur-Hayano:BLFandMCG} for the account of this correspondence in the orientable case.
	\end{remark}

	\subsection{Simplified trisections} 
    
	\begin{figure}
		\centering
		\includegraphics[width=0.5\linewidth]{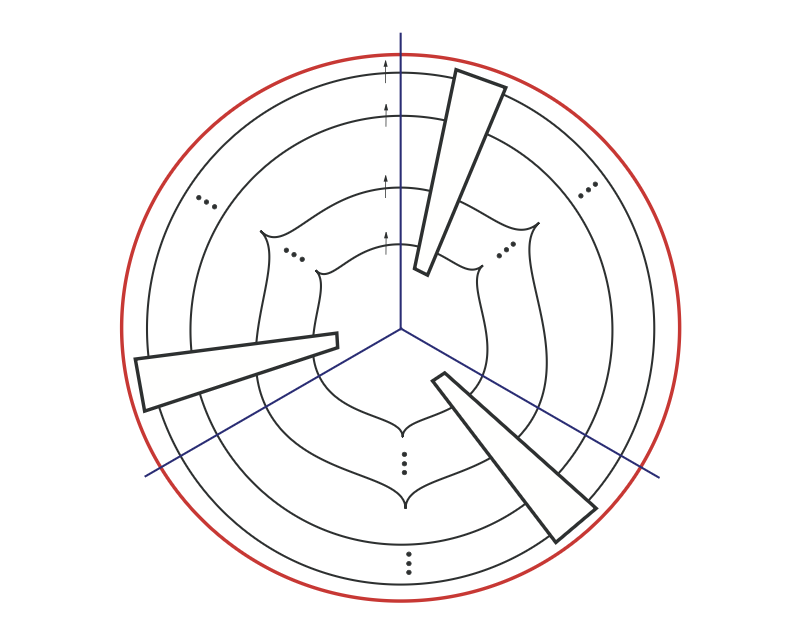}
		\caption{A base for a map $f\colon X\to \R^2$ corresponding to a trisection on $X$. The red outermost circle is a definite fold circle that bounds the image of $f$, whereas the others are indefinite fold arcs and circles, all directed outwards. Note that in each sector, an indefinite fold circle carries at most one cusp, and the total number of cusps is the same across all three sectors.}
		\label{fig:morse2_fn}	
	\end{figure}
	
	The correspondence between trisection decompositions of closed, connected $4$--manifolds and \emph{trisection maps}, which are special generic maps whose base diagrams are as in Figure~\ref{fig:morse2_fn}, plays a central role in the original proof of the existence of trisections on \emph{orientable} $4$--manifolds by Gay and Kirby \cite{Gay-Kirby:trisection}. In the case of simplified trisections, the \emph{Cerf boxes} for the trisection map $f \colon X \to \mathbb{R}^2$, namely, the regions of the base diagram over which the map is a directed indefinite fibration without cusps, are trivial. This is equivalent to the boxes in Figure \ref{fig:morse2_fn} consisting of parallel lines.
	
	There are just a few cosmetic differences when $X$ is nonorientable. Let's say $X$ admits a simplified trisection map with $g$ indefinite fold circles, $k$ of which are thrice-cusped.  The trisection map $f$ determines a $(g,k)$ trisection of $X$ as follows: We take the trisection decomposition of $X=X_1 \cup X_2 \cup X_3$ with sectors $X_i:=f^{-1}(A_i)$, where each $A_i$ is the slice of the base of the trisection map $f \colon X \to \mathbb{R}^2$ shown in Figure~\ref{fig:morse2_fn}. Verbatim to the oriented case \cite{Gay-Kirby:trisection}, each $X_i$ is a $4$--dimensional genus--$k$ handlebody, with pairwise intersections along a $3$--dimensional genus--${g}$ handlebody, and a triple intersection along the same surface $S:=f^{-1}(0)$. There are two $3$--dimensional handlebodies up to diffeomorphism: $\natural_{g} S^1 \times D^2$ and $\natural_{g} S^1 \twist D^2$. Similarly, there are two $4$--dimensional handlebodies: $\natural_{k} S^1 \times D^3$ and $\natural_{k} S^1 \twist D^3$. Since each $3$--dimensional handlebody $X_{i,j} := X_i \cap X_j$ shares the same boundary $S$, they all have the same type of handlebody $H$, determined by the orientability of $S$. And since each $\partial X_i \cong H \cup_S H$, this in turn determines the handlebody type of $X_i$.
	
	Now note that we can induce a handle decomposition of $X$ with all $0$-- and $1$--handles in $X_1$, and all $3$-- and $4$--handles in $X_3$, verbatim to the orientable case \cite[Section~3]{Gay-Kirby:trisection}. Since $X$ is nonorientable, we deduce that $X_i \cong \natural_k S^1 \twist D^3$. In turn, we see that $H \cong \natural_{g} S^1 \twist D^2$ and $S = N_{2g}$.
	
	Alternatively, one can see that $S = N_{2g}$ by examining the base diagram of $f$ and recalling that $X$ is nonorientable, and then deduce the diffeomorphism types of the handlebodies from it, which is fairly straightforward in the simplified case.

	Theorem \ref{thm:STexistence} follows immediately from the existence of a simplified trisection map on $X$. This is because the construction given in \cite[Proof of Theorem 7.1]{Baykur-Saeki:TAMS} to obtain a simplified trisection from an SBLF on an orientable $4$--manifold works just the same in the nonorientable case. We summarize this process below.

	\begin{proof}[Proof of Theorem \ref{thm:STexistence}.]
		Given a simplified broken Lefschetz fibration $f\colon X\to S^2$, partition the base as $S^2=D^2_+\cup A \cup D^2_-$ as in Section \ref{sec:SBLF_topology}. Let $X_+$, $X_0$, and $X_-$ be the respective preimages of these regions under $f$. Then let $g \colon X\to \mathbb{R}^2$ be defined as follows:\begin{itemize}
			\item $g|_{X_+\cup X_0}:=f|_{X_+\cup X_0}$. Identifying $D^2_ +\cup A$ with the unit disk in $\mathbb{R}^2$, we view $g|_{X_+\cup X_0}$ as mapping into $\mathbb{R}^2$. 
			\item Let $F_-$ be a genus $g$ lower-genus fiber of $f$ and let $\nu F_-$ be a neighborhood of $F_-$ contained in $\text{Int}(D_-)$. Set $g|_{\nu F_-} = f|_{\nu F_-}$, where we identify the image of $f|_{\nu F_-}$ with $\mathbb{D}\subset \R^2$ as we did in the previous step.
			\item Let $S= X_+\cup X_0\cup \nu F_-$. We have now defined the function $g$ on $S$ such that $g^{-1}(\partial \mathbb{D})$ is two copies of $F_-\times S^1$. Let $h\colon F_-\to [1,2]$ be the standard Morse function on $F_-$ with one index--$0$ critical point over $1$, $g$ index--$1$ critical points, and one index--$2$ critical point over $2$. Then define $\phi\colon S^1\times [-1,1]\times F_-\to S^1\times [1,3]$ as \[ \phi(u,t, x) = (u, h(x)\cos(\pi t/2)+1).\] By identifying $S^1 \times [-1,1]\times F_-$ with $X - S$ and embedding $S^1\times [1,3]$ into $\mathbb{R}^2$, $\phi$ defines a map $X-S \to \mathbb{R}^2$. Let $g|_{X-S}:=\phi$.
		\end{itemize} In total, $g$ defines a generic map $X\to \mathbb{R}^2$ with concentric fold circles around the origin, and whose Lefschetz singularities lie inside the fold circle of smallest radius. The outer-most fold circle is definite, and there is one inward-directed indefinite fold circle. The rest of the fold circles are indefinite and outward-directed. As in the proof of \cite[Theorem 7.1]{Baykur-Saeki:TAMS}, we may move the inward-directed fold into two thrice-cusped outward-directed folds. Then we perturb Lefschetz singularities into outward-directed fold circles with three cusps using the wrinkle homotopy \cite{Lekili}. The resulting base diagram after these homotopies is a simplified trisection diagram \cite{Baykur-Saeki:TAMS}.
	\end{proof}

	\begin{remark} \label{rk:trisection_to} 
    Suppose the SBLF $f$ has $a$ Lefschetz singularities. If $Z_f\neq \emptyset$ and $\beta_1(F_-;\Z_2)=b$, then the algorithm above produces an $(a+b+3, b+1)$ simplified trisection. If $Z_f=\emptyset$ and $\beta_1(F_-;\Z_2)=\beta_1(F_+;\Z_2)=b$, then the algorithm produces an $(a+b+2,b)$ simplified trisection. Conversely, the proof of \cite[Proposition~7.6]{Baykur-Saeki:TAMS} extends mutadis mutandis to derive an SBLF on a nonorientable $4$--manifold $X$ from a given simplified trisection. From a nonorientable $(g,k)$ simplified trisection on $X$, one may obtain an SBLF on $X$ with a higher-genus fiber of $N_{2g+4}$, a lower-genus fiber of $N_{2g+2}$, and $2g+3k+4$ Lefschetz singularities.
	\end{remark}
	
	Given a closed, connected $4$--manifold $X$, one can ask what the smallest possible genus is of a trisection on $X$. We conclude this section by providing a few examples where our methods realize the minimal genus. We'll make use of the following lower bound for a nonorientable trisection genus, provided in \cite{Spreer-Tillman}.
	
	\begin{lemma}\label{lem:genus_bound}
		If $X$ is nonorientable and admits a $(g,k)$ trisection, then \[g\geq \beta_1(X;\Z_2)+\beta_2(X;\Z_2).\]
	\end{lemma}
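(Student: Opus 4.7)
The plan is to prove the inequality by combining two pieces of information: an Euler-characteristic computation coming from the trisection decomposition, and a rank bound on $H_1(X;\ZZ_2)$ coming from the induced handle decomposition.

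First, I would compute $\chi(X)$ using the trisection decomposition $X = X_1 \cup X_2 \cup X_3$ via inclusion–exclusion. In the nonorientable case each sector satisfies $X_i \cong \natural_k (S^1 \twprod D^3)$ with $\chi(X_i) = 1-k$, each pairwise intersection $X_i \cap X_j \cong \natural_g (S^1 \twprod D^2)$ has $\chi = 1-g$, and the central surface $S = N_{2g}$ has $\chi(S) = 2-2g$. Inclusion–exclusion yields
\[
\chi(X) \;=\; 3(1-k) - 3(1-g) + (2-2g) \;=\; 2 + g - 3k.
\]
Since $X$ is closed, Poincar\'e duality over $\ZZ_2$ gives $\beta_0 = \beta_4 = 1$ and $\beta_1 = \beta_3$, hence
\[
\chi(X) \;=\; 2 - 2\beta_1(X;\ZZ_2) + \beta_2(X;\ZZ_2).
\]
Equating the two expressions for $\chi(X)$ rearranges to $g = 3k - 2\beta_1(X;\ZZ_2) + \beta_2(X;\ZZ_2)$.

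Next, I would use the handle decomposition induced by the trisection (as recalled in the discussion preceding Theorem~\ref{thm:STexistence}): place all $0$-- and $1$--handles in $X_1$, and all $3$-- and $4$--handles in $X_3$. Since $X_1 \cong \natural_k (S^1 \twprod D^3)$, this accounts for exactly one $0$--handle and $k$ $1$--handles, so the $1$--skeleton of $X$ has exactly $k$ one-cells. Consequently $H_1(X;\ZZ_2)$ is a quotient of a free $\ZZ_2$--module of rank $k$, giving the key bound $\beta_1(X;\ZZ_2) \leq k$.

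Combining these, I substitute $3k \geq 3\beta_1(X;\ZZ_2)$ into the expression for $g$ to obtain
\[
g \;=\; 3k - 2\beta_1(X;\ZZ_2) + \beta_2(X;\ZZ_2) \;\geq\; \beta_1(X;\ZZ_2) + \beta_2(X;\ZZ_2),
\]
as desired. The potentially delicate step is the handle-count for $X_1$, but this is precisely what is built into the definition of a nonorientable trisection in Section~\ref{sec:SBLFs_and_trisections}: each $X_i$ is a $4$--dimensional handlebody $\natural_k (S^1 \twprod D^3)$ with a standard handle structure having one $0$--handle and $k$ $1$--handles, so the rank bound on $\beta_1$ is immediate and no further argument is needed.
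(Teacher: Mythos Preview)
Your argument is correct. The paper does not actually prove this lemma; it simply quotes the bound from \cite{Spreer-Tillman} and uses it. Your proof---computing $\chi(X)=2+g-3k$ via inclusion--exclusion on the trisection pieces, equating with $\chi(X)=2-2\beta_1+\beta_2$ from $\ZZ_2$--Poincar\'e duality to get $g=3k-2\beta_1+\beta_2$, and then invoking $\beta_1\le k$ from the induced handle decomposition---is exactly the standard argument (and is essentially how the bound is established in the cited reference as well). The step you flag as potentially delicate is indeed fine: the trisection-induced handle decomposition, as the paper recalls just before Theorem~\ref{thm:STexistence}, has a single $0$--handle and exactly $k$ $1$--handles coming from $X_1\cong\natural_k S^1\twprod D^3$, so $H_1(X;\ZZ_2)$ is generated by at most $k$ elements.
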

	
	\begin{example}[$S^2\times N_g$]
		Consider the trivial SBLF $p\colon N_g\to S^2$, where $N_g$ is a closed nonorientable genus $g$ surface. Let $A$ be an annular neighborhood of the equator in $S^2$. Following the procedure in Theorem \ref{thm:STexistence}, we first define a map on $p\inv(S^2 \setminus A)$, which in this case is a simple projection onto the unit disk. So, to begin, our regular fibers over the unit disk are two disjoint copies of $N_g$. As we extend this map over $p\inv(A)$, we introduce one inward--directed indefinite fold, followed by $g$ outward--directed indefinite folds, and then one inward--directed definite fold. We flip-and-slip the inward-directed indefinite fold to be outward-directed, then unsink one of the cusps so that it becomes a Lefschetz singularity. Lastly we wrinkle the Lefschetz singularity to obtain a $(g+2,g)$ simplified trisection. Since $g+2 = \beta_1(N_g\times S^2;\Z_2)+\beta_2(N_g\times S^2;\Z_2)$, we deduce from Lemma~\ref{lem:genus_bound} that this trisection is of minimal genus.
	\end{example}
	
	\begin{example}[$S^1\twist S^3$ and $\mathbb{RP}^4$]
		Consider the map $\tilde f:S^1\twist S^3 \to A^2$ from Example \ref{eg:s1s3}. By embedding the annulus into $\R^2$ and then applying a definite-to-indefinite homotopy to the inner fold circle, we obtain a trisection map corresponding to a simplified $(1,1)$ trisection. As discussed in \cite{Miller-Naylor}, this is the only nonorientable genus one trisection. The fact that it's of minimal genus follows immediately from the fact that any lower--genus trisection would be $S^4$. By applying a nonorientable blow-up, we obtain a map $\mathbb{RP}^4\to \R^2$ with the same base diagram except for having one new Lefschetz singularity.  We may wrinkle the Lefschetz singularity into a thrice cusped fold circle to obtain a $(2,1)$ simplified trisection of $\mathbb{RP}^4$. Lemma \ref{lem:genus_bound} shows that this trisection is also minimal genus.\\
	\end{example}

	\begin{example}($S^1 \x Y^3$ and spin of $Y^3$) 
		Let $Y$ be a closed, connected, nonorientable $3$--manifold, and set $k=\beta_1(Y;\Z_2)$, which by duality also implies $k=\beta_2(Y;\Z_2)$. We look at two general constructions, following \cite{Baykur-Saeki:PNAS}. 
        
        Let $f \colon Y\to I$ be a Morse function on $Y$ with one index $0$ critical value at $0$, $m$ index $1$ critical points with images in $(0,0.5)$, $m$ index two critical points in $(0.5, 1)$, and one index $3$ critical point at $1$, where $m\geq k$. Then the map $\text{id}_{S^1}\times F\colon S^1\times Y \to S^1\times I\hookrightarrow \R^2$ has one outward-directed definite fold circle, followed by $m$ inward-directed indefinite fold circles, then $m$ outward-directed indefinite fold circles, followed by one definite fold circle. We may replace the inner-most definite fold circle with an outward-directed indefinite fold circle using the definite-to-indefinite flip-and-slip. After that, the map may be homotoped to give a $(3m+1, m+1)$--simplified trisection for $S^1\times Y$. We have $H_1(S^1\times Y;\Z_2)\cong \Z_2^{k+1}$ and $H_2(S^1\times Y;\Z_2)\cong \Z_2^{2k}$. So by Lemma \ref{lem:genus_bound}, this is a minimal genus trisection of $S^1\times Y$ when $m=k$. 
		
		We may also obtain a simplified trisection for the \emph{spin} of $Y$, $S_Y$ by modifying the above procedure slightly. Recall that $S_Y$ is obtained from $S^1\times Y$ by replacing $S^1\times D^3$ with $S^2\times D^2$ for some $D^3\subset Y$. From the $(3m+1,m+1)$--trisection for $S^1\times Y$, a $(3m, m)$--trisection is obtained for the $S_Y$ by surgering out the preimage of a (non-cusped) indefinite fold circle. Since $\beta_1(S_Y;\Z_2)=k$ and $\beta_2(S_Y;\Z_2)=2k$, this is also a minimal genus trisection when $m=k$.

        In either case, the above process results in a minimal genus trisection whenever $Y$ admits a $\Z_2$--perfect Morse function. This criteria is met for instance by a connected sum of any number of copies of  
        $S^1\twist S^2$ and $S^1 \x N_k$. 
	\end{example}

\section{Classification of low genera fibrations}\label{sec:classification}
	
	Here we give a detailed discussion of all SBLFs whose higher--genus fiber is a Klein bottle. This starts with SBLFs with no singularities (i.e. fiber bundles), then progresses to include indefinite fold singularities, then ends by adding in Lefschetz singularities. 
	
	\begin{remark}\label{rmk:low_genus_2}
		We skip over the cases when the higher genus side regular fiber $F_+\cong S^2$ or $\RP$ because such SBLFs would not have an indefinite fold and there are no \emph{non-minimal} Lefschetz fibrations with $S^2$ or $\RP$ fibers. Thus, the smallest nonorientable genus of SBLFS worth studying is two, that is when $F_+ = Kb$.
	\end{remark}

	\subsection{Klein bottle bundles over the 2-sphere} \label{sec:KB_bundles} 
    The homotopy type of $Kb$ bundles over $S^2$ is already established in \cite[Chapter 5]{Hillman}. These bundles can be identified with explicit elements of $\pi_1(\Diff(Kb))$, as we review below. We will identify their orienation double--covers, which using Proposition \ref{prop:dble_cover_alg}, confirms their Kirby diagrams. This way, we will present a smooth classification of their total spaces via explicit handlebody descriptions.
	
	Let $X$ be a fiber bundle over $S^2$ with $Kb$ fibers. Then $X$ is obtained from gluing two copies of $Kb\times D^2$ via $\phi\colon S^1\times Kb\to S^1\times Kb$ such that $pr_1\circ \phi = pr_1$. Such a map $\phi$ can be written as $\phi(t,p) = (t,\phi_2(t,p))$ with $\phi_2\colon S^1\times Kb\to Kb$. Thus $\phi$ corresponds to a loop in $\Diff(Kb)$. Since homotopic loops give diffeomorphic fiber bundles, each element of $\pi_1(\Diff(Kb))$ gives rise to a $Kb$ bundle over $S^2$. Earle and Eells have shown that this group is isomorphic to $\Z$ \cite{Earle_Eells}. As we are interested in a smooth classification of total spaces, we will examine when any two $Kb$--bundles give the same total space. 
    
    Regard $Kb$ as a quotient space of its universal cover $\R^2$ as follows: let $\sigma\colon \R^2\to \R^2$ and $\tau\colon \R^2\to \R^2$ be defined \begin{align*}
		\sigma(x,y) &= (x+1,y)\\
		\tau(x,y) &= (-x,y+1).
	\end{align*} Then $Kb=\R^2/\lan \sigma,\tau\ran$. Let $\tilde \phi_t\colon \R^2\to \R^2$ be defined \[
	\tilde \phi_t(x,y) = (x,y+2t), \,\, t\in [0,1].
	\] One can check that for all $t$, $\tilde \phi_t$ descends to a diffeomorphism $\phi_t\colon Kb\to Kb$, and that $\phi_0=\phi_1$. Hence we get a loop of diffeomorphisms of $Kb$, which we denote as $\phi\colon S^1\times Kb\to Kb$ and regard as an element of $\pi_1(\Diff(Kb))$.
    
    \begin{lemma}\label{lem:diff_KB_gen}
    $\pi_1(\Diff(Kb))$ is generated by $\phi$.
    \end{lemma}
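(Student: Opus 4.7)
The plan is to combine the Earle--Eells computation $\pi_1(\Diff(Kb)) \cong \Z$ (already invoked just before the lemma) with an analysis of the evaluation map $\mathrm{ev}\colon \Diff_0(Kb) \to Kb$, $f \mapsto f(p_0)$, at the basepoint $p_0 := [(0,0)]$. Writing $\pi_1(Kb, p_0) = \langle a, b \mid abab^{-1}\rangle$ with $a$ and $b$ the loops descending from the deck transformations $\sigma$ and $\tau$ respectively, it will then suffice to show that $[\phi]$ is primitive in $\Z \cong \pi_1(\Diff(Kb))$, i.e., not a proper multiple of any element.

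First I would directly compute the image of $[\phi]$ under the induced homomorphism $\mathrm{ev}_*\colon \pi_1(\Diff(Kb)) \to \pi_1(Kb)$: the loop $t \mapsto \phi_t(p_0) = [(0, 2t)]$ traces out the $b$--circle twice, so $\mathrm{ev}_*([\phi]) = b^2$. Next I would verify that the image of $\mathrm{ev}_*$ is contained in $\ker(w_1) \subseteq \pi_1(Kb)$. Given any loop $f_t$ in $\Diff_0(Kb)$ based at $\mathrm{id}$, the differentials $df_t|_{p_0}\colon T_{p_0}Kb \to T_{f_t(p_0)}Kb$ depend continuously on $t$ and equal $\mathrm{id}$ at $t = 0, 1$, so their determinants give a continuous trivialization of the pullback of the orientation $\Z/2$--bundle of $Kb$ over the loop $t \mapsto f_t(p_0)$; hence this loop is orientation-preserving.

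It then remains to show $[\phi]$ is primitive. Suppose $[\phi] = k \cdot [\psi]$ in $\Z$ for some generator $[\psi]$ and $k \geq 1$, and set $\gamma := \mathrm{ev}_*([\psi])$, so that $\gamma^k = b^2$ in $\pi_1(Kb)$. Writing $\gamma = a^m b^n$ in normal form and using the relation $bab^{-1} = a^{-1}$, a short induction shows that the $b$--exponent of $(a^m b^n)^k$ is always $kn$, independent of parities. Hence $kn = 2$, forcing $k \in \{1, 2\}$: for $k = 1$ one obtains $\gamma = b^2$, while $k = 2$ forces $\gamma = a^m b$ for some $m \in \Z$. The latter case is excluded since $w_1(a^m b) = w_1(b) = 1$ contradicts the preceding paragraph. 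Hence $k = 1$, and $[\phi]$ generates $\pi_1(\Diff(Kb))$. The only nonalgebraic input is the orientation argument in paragraph two, which is routine; the remaining work is a brief computation in the Klein bottle group.
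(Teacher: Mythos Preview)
Your proof is correct and follows the same overall strategy as the paper: both arguments rely on Earle--Eells, pass through the evaluation homomorphism $\mathrm{ev}_*\colon \pi_1(\Diff(Kb)) \to \pi_1(Kb)$, identify the image of $[\phi]$ as the square of the orientation-reversing generator, and then show this element has no nontrivial root inside $\mathrm{Im}(\mathrm{ev}_*)$. (Be aware that your labeling is swapped relative to the paper's: the paper takes $a$ to come from $\tau$ and $b$ from $\sigma$, so your $b^2$ is the paper's $a^2$.)

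The difference is in which constraint on $\mathrm{Im}(\mathrm{ev}_*)$ is used. The paper first computes $\pi_1(K_n)$ and $H_1(K_n)$ for the bundles $K_n$---information reused later in Section~\ref{sec:KB_bundles}---and then asserts that $\mathrm{ev}_*(\psi)\in\langle a^2\rangle$; this subgroup is exactly the center $Z(\pi_1(Kb))$, and the standard fact that evaluation at a point sends $\pi_1(\Diff_0(M))$ into $Z(\pi_1(M))$ justifies it, although the paper's one-line explanation (``otherwise $\pi_1(\Diff(Kb))$ is not cyclic'') is rather opaque. You instead invoke the weaker constraint $\mathrm{Im}(\mathrm{ev}_*)\subset\ker(w_1)$, proved cleanly via the differentials $df_t|_{p_0}$, and compensate with an explicit normal-form computation in $\pi_1(Kb)$ ruling out the square roots $a^m b$ of $b^2$. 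Your route is slightly more hands-on but fully self-contained; the paper's is shorter once the center fact is granted, and its preliminary $\pi_1(K_n)$ computation, while not strictly needed for the lemma itself, feeds into the rest of the classification.
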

	
    \begin{proof}
	Let $a,b$ be the generators of $\pi_1(Kb)$, such that any $Kb$ bundle over $S^2$ is obtained by adding one relation to the group $\langle a,b|aba\inv b\rangle$. Let $K_1$ be the $Kb$ bundle over $S^2$ with gluing map $\phi$. Then $\pi_1(K_1)$ is obtained from $\pi_1(Kb)$ by adding the image of $\phi(S^1\times \{pt\})$ as a relation. Since $\phi(S^1\times \{pt\})$ is homotopic to $a^2$, $\pi_1(K_1)=\lan a,b|aba\inv b,a^2\ran$. More generally, if $K_n$ is the $Kb$ bundle corresponding to $\phi^n$, we have $\pi_1(K_n)\cong \lan a,b|aba\inv b,a^{2n}\ran$, and in turn, $H_1(K_n) \cong \Z_{2n} \oplus \Z_2$. We see that for each integer $n\geq 0$, the total spaces $K_n$ have distinct homotopy types. 
    
    Let $\psi$ be the generator of $\pi_1(\Diff(Kb)) \cong \Z$ so that $\psi^m =\phi$ for some $m> 0$, then $\psi(S^1\times \{pt\})$ is homotopic to $a^{2k}$ for some integer $k$, otherwise $\pi_1(\Diff(Kb))$ is not cyclic. Then $\psi^m(S^1\times \{pt\})=a^{2mk} = \phi(S^1\times \{pt\})= a^2$. Since $a \in \pi_1(Kb)$ has infinite order, we conclude that $m=k=1$. Hence $\psi=\phi$, and so $\phi$ generates $\pi_1(\Diff(Kb))$.
    \end{proof}
	
	When $n=0$, $K_0=Kb\times S^2$, and it's well--known what the handlebody structure is and that its orientation double cover is $T^2\times S^2$. We extend this knowledge to any $K_n$ for $n\geq 1$. It suffices to restrict our attention to $K_n$ with $n\geq 0$ for the following reason: the inverse map $\phi^{-1}$ is given by $\phi\circ (\mathfrak{a}\times \text{id}):S^1\times Kb\to Kb$, where $\mathfrak{a}$ is the antipodal map. This is isotopic to $\phi$ itself, hence $K_{-n}\cong K_{n}$.
	\begin{figure}
		\centering

		\resizebox{6in}{!}{
		\begin{tikzpicture}
			
			\begin{scope}[scale=.7]
				
				\begin{knot}[
					clip width = 5pt,
					flip crossing/.list={2}
					]
					\strand[thick] (.5,3) -- (2,3) .. controls +(1,0) and +(0,1) .. (3,2) -- (3,.5);
					\strand[thick, rotate = 90] (.5,3) -- (2,3) .. controls +(1,0) and +(0,1) .. (3,2) -- (3,.5);
					\strand[thick, rotate = 180](.5,3) -- (2,3) .. controls +(1,0) and +(0,1) .. (3,2) -- (3,.5);
					\strand[thick, rotate = -90] (.5,3) -- (2,3) .. controls +(1,0) and +(0,1) .. (3,2) -- (3,.5);

					\strand[thick] (-3,0) .. controls +(.1,1) and +(-2,0) .. (0,1.25) .. controls +(2,0) and +(-.5,.5).. (3,0);
					\strand[thick] (-3,0) .. controls +(1,.2) and +(-2,0) .. (0,.2) .. controls +(2,0) and + (-.3,.75) .. (3.25,2) .. controls +(.3,-.75) and +(-.75,1.5) .. (3,0) ;
					
					\strand[thick] (-3,0) .. controls +(.1,-1) and +(-2,0) .. (0,-1.25) .. controls +(2,0) and +(-.1,-1).. (3,0);
					\strand[thick] (-3,0) .. controls +(1,-.2) and +(-2,0) .. (0,-.2) .. controls +(2,0) and +(-1,-.2).. (3,0);

				\end{knot}
				
				
				\filldraw[fill=white] (0,.2) circle [radius = .25cm] (0,.2) node[scale=.8] {$1$};
				\draw (-1.75,.85) node{$\vdots$};
				\draw (-1.6, .8) node[anchor = west, scale=.75] {($n$ strands)};
				
				\draw  (-1,-.6) node {$\vdots$};
				\draw (-.9,-.75) node[anchor = west, scale =.75] {($n$ strands)};
				
				
				\draw (3,-2.5) node[anchor=west] {$\cup\, \tilde h^3, h^3, h^4$};
				\draw (-3.2,3.2) node{$0$};
				
				
				\fill[color=white] (-3,0) circle [radius = .5cm];
				\fill[color=white] (3,0) circle [radius = .5cm];
				\fill[color=white] (0,3) circle [radius = .5cm];
				\fill[color=white] (0,-3) circle [radius = .5cm];
				
				\fill[color = pink] (-3.5,0) .. controls +(0,.2775) and +(-.27775,0) .. (-3,.5) .. controls +(.2775,0) and +(0,.2775) .. (-2.5,0);
				\fill[color = pink] (3.5,0) .. controls +(0,-.2775) and +(.27775,0) .. (3,-.5) .. controls +(-.2775,0) and +(0,-.2775) .. (2.5,0);
				
				\draw[thick] (-3,0) circle [radius = .5cm];
				\draw[thick] (3,0) circle [radius = .5cm];
				\draw[thick] (0,3) circle [radius = .5cm];
				\draw[thick] (0,-3) circle [radius = .5cm];

			\end{scope}

			\begin{scope}[xshift = 9cm, scale = .7]
				
				\begin{knot}[
					clip width = 5pt,
					flip crossing/.list={2}
					]
					\strand[thick] (.5,3) -- (2,3) .. controls +(1,0) and +(0,1) .. (3,2) -- (3,.5);
					\strand[thick, rotate = 90] (.5,3) -- (2,3) .. controls +(1,0) and +(0,1) .. (3,2) -- (3,.5);
					\strand[thick, rotate = 180](.5,3) -- (2,3) .. controls +(1,0) and +(0,1) .. (3,2) -- (3,.5);
					\strand[thick, rotate = -90] (.5,3) -- (2,3) .. controls +(1,0) and +(0,1) .. (3,2) -- (3,.5);

					\strand[thick] (-3,0) .. controls +(.1,1) and +(-2,0) .. (0,.75) .. controls +(2,0) and +(-.5,.5).. (3,0);
					\strand[thick] (-3,0) .. controls +(.1,-1) and +(-2,0) .. (0,-.75) .. controls +(2,0) and + (-1,0) .. (3,2.25) .. controls +(.5,0) and +(.5,.2) .. (3,1.5) .. controls +(0,0) and +(-.75,1.5) .. (3,0);
					
				\end{knot}
				
				
				\draw (-2,.2) node {$\vdots$};
				\draw (-1.8,0) node[anchor = west, scale = .8]{($n$ strands)};
				\draw (-3.2,3.2) node{$0$};
				\draw (3.5,2) node[anchor = west] {$1$};

				
				\draw (3,-2.5) node[anchor=west] {$\cup\, 2\, h^3,h^4$};
				
				
				\fill[color=white] (-3,0) circle [radius = .5cm];
				\fill[color=white] (3,0) circle [radius = .5cm];
				\fill[color=white] (0,3) circle [radius = .5cm];
				\fill[color=white] (0,-3) circle [radius = .5cm];

				\draw[thick] (-3,0) circle [radius = .5cm];
				\draw[thick] (3,0) circle [radius = .5cm];
				\draw[thick] (0,3) circle [radius = .5cm];
				\draw[thick] (0,-3) circle [radius = .5cm];
				
			\end{scope}
			
		\end{tikzpicture}
		}

		\caption{Left: Klein bottle fiber bundles over $S^2$. Right: $T^2$ fiber bundles over $S^2$. In both figures, all $n$ strands run parallel to each other except for the strands that link around the fiber $2$--handle.}
		\label{fig:KB_bundles}
	\end{figure}
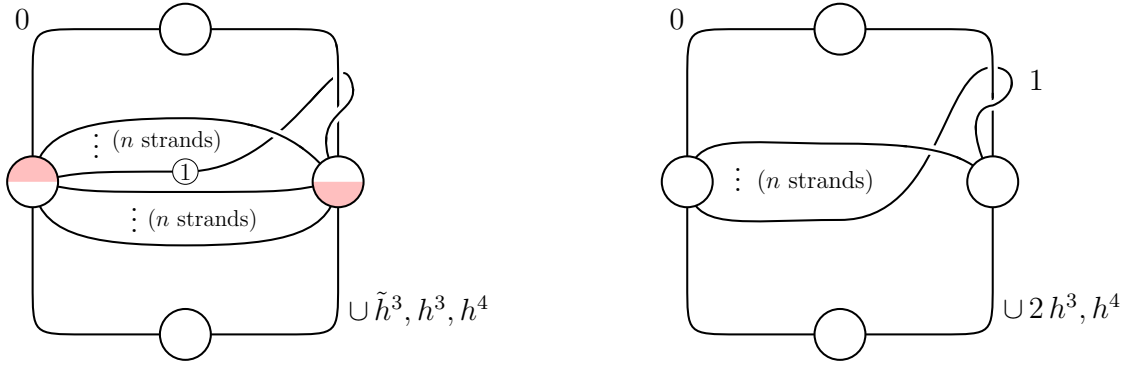
	
	\begin{proposition} \label{prop:KB_bundle_handlebody}
        $K_n$ has a Kirby diagram as shown on the left in Figure \ref{fig:KB_bundles}, and the orientation double cover of $K_n$ is $S^1\times L(n,1)$, which is shown on the right in Figure \ref{fig:KB_bundles}.
	\end{proposition}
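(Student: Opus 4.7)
The plan is to verify the two claims of the proposition by separately identifying the handlebody picture and the orientation double cover.

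First, I decompose $K_n = (Kb \times D^2) \cup_{\phi^n} (Kb \times D^2)$. The standard handle decomposition of $Kb \times D^2$ consists of one $0$-handle, one untwisted and one twisted $1$-handle corresponding to the generators $a$ and $b$ of $\pi_1(Kb)$, and one $2$-handle attached along the Klein bottle relator; this constitutes the inner portion of the left diagram of Figure~\ref{fig:KB_bundles}, with the framing coefficient as shown. The second copy of $Kb \times D^2$, added upside-down, contributes a single $2$-handle together with $3$- and $4$-handles; by the nonorientable Laudenbach--Po\'enaru theorem \cite{cesardesa:thesis, Miller-Naylor}, these higher-index handles glue uniquely, so we need only identify the attachment of the dual $2$-handle, which records the clutching diffeomorphism. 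Since $\tilde\phi^n_t(x,y) = (x, y+2nt)$ on the universal cover of $Kb$, iterated $n$ times, the attaching circle of the dual $2$-handle winds $n$ times around the $a$-cycle on the fiber $Kb\times \{pt\}$, with framing as in the trivial case $n=0$, which recovers $Kb \times S^2$. This produces exactly the outer $2$-handle of the left diagram in Figure~\ref{fig:KB_bundles}.

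Second, the orientation double cover can be identified in two complementary ways. Intrinsically, the covering $T^2 \to Kb$ pulls $K_n$ back to a $T^2$-bundle over $S^2$. Because $\tilde\phi_t(x,y) = (x, y+2t)$ commutes with the deck transformation of $T^2 \to Kb$, the clutching map $\phi^n$ lifts to translation on $T^2$ by $2nt$ in the $y$-direction, representing the element $(0,n) \in \pi_1(\Diff(T^2)) \cong \Z^2$. The resulting $T^2$-bundle is diffeomorphic to $S^1 \times L(n,1)$, with the trivial $x$-direction contributing the $S^1$ factor and $L(n,1)$ realizing the $S^1$-bundle over $S^2$ of Euler number $n$. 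Alternatively, applying Proposition~\ref{prop:dble_cover_alg} to the left diagram of Figure~\ref{fig:KB_bundles} produces an orientable Kirby diagram with two $0$-handles, two untwisted lifts of the $a$-handle, two twisted lifting $1$-handles, and lifts of each $2$-handle that traverse both sheets via the lifting $1$-handles. Cancelling one of the $0$-handles against one of the twisted lifting $1$-handles converts the remaining configuration into an ordinary Kirby diagram on a single $0$-handle, with two untwisted $1$-handles and two $2$-handles. Further standard Kirby moves, including handle slides that separate the lifts of the fiber $2$-handle from the lifts of the twisting $2$-handle, yield the right diagram of Figure~\ref{fig:KB_bundles}, which is the familiar handle picture of $S^1 \times L(n,1)$.

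The main obstacle is the handle-calculus bookkeeping in this reduction of the double cover, especially tracking the sign changes of framing coefficients when arcs pass through the remaining lifting $1$-handle under the convention introduced in Section~\ref{sec:Kirby}, in parallel with the calculation carried out in Example~\ref{eg:RP4_RP4} for the case $n=1$. Carrying out the above program, the two independent identifications of the double cover agree, mutually confirming both the handlebody presentation of $K_n$ and the asserted diffeomorphism type of its orientation double cover.
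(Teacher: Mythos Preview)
Your proposal is essentially correct and reaches the same conclusion, but the route differs from the paper's in one substantive way. For identifying the orientation double cover, the paper does \emph{not} lift the clutching map directly; instead it argues abstractly that the cover is a $T^2$-bundle over $S^2$, invokes the diffeomorphism classification of such bundles from \cite{Baykur_Kamada}, and then computes $\pi_1$ of the cover via Reidemeister--Schreier to pick out $S^1\times L(n,1)$ from that list. Your geometric argument---lifting $\phi^n$ to the translation loop $(0,n)\in\pi_1(\Diff(T^2))\cong\Z^2$ and reading off $S^1\times L(n,1)$ directly---is cleaner and avoids the algebra, at the cost of invoking (implicitly) the same classification of torus bundles. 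Both arguments then verify the Kirby diagram by applying Proposition~\ref{prop:dble_cover_alg} and performing handle calculus; the paper carries this out in detail in Figure~\ref{fig:double_cover_pf}.

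Two small points of care. First, the attaching circle of the dual $2$-handle represents $a^{2n}$ in $\pi_1(Kb)$ (it passes $2n$ times over the twisted $1$-handle, as in the figure), not ``$n$ times around the $a$-cycle'' unless by that you mean the two-sided curve $a^2$; you should make this explicit. Second, the framing of that $2$-handle is not simply ``as in the trivial case $n=0$'': in the diagram one arc carries a nonzero coefficient arising from the linking with the fiber $2$-handle, and the arc-by-arc framing convention of Section~\ref{sec:Kirby} must be tracked through the twisted $1$-handle. The paper sidesteps this issue by positing the diagram (which is visibly a $Kb$-bundle over $S^2$, hence some $K_m$) and using the double-cover computation to confirm $m=n$; if you want to derive the diagram directly from the clutching map, you need to justify the framing more carefully.
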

	
	\begin{proof}
        Let $f\colon K_n\to S^2$ be a Klein Bottle bundle and let $p\colon M_n\to K_n$ be the orientation double cover. We first claim that $M_n$ is a locally trivial torus fibration over $S^2$. To see this, fix $x\in M_n$. There is a disk neighborhood $U_1\ni f\circ p(x)$ over which $f\colon K_n\to S^2$ is modeled by $pr_1\colon U_1\times Kb\to U_1$. There is another disk neighborhood $U_2\ni f \circ p(x)$ such that $p|_{(f \circ p)\inv(U_2)}$ is an orientation double cover of $p(U_2)$. Let $D$ be a disk neighborhood in $\text{Int}(U_1\cap U_2)$. Then $f$ is modeled by $pr_1\colon D\times Kb\to D$ over $f\inv(D)$ and $p$ is modeled by the standard orientation double cover $D^2\times T^2\to D^2\times Kb$ over $(f\circ p)\inv(D)$. This confirms that $f\circ p\colon M_n\to S^2$ is a locally trivial $T^2$ bundle. A diffeomorphism classification of total spaces of these bundles was given in \cite{Baykur_Kamada}, where it was shown that the only closed, orientable 4-manifolds admitting a locally trivial torus fibration over $S^2$ are $S^2\times T^2$, $S^1\times S^3$, and $S^1\times L(n,1)$. These bundles all admit Kirby diagrams as shown in the right of Figure \ref{fig:KB_bundles}.

		Next we compute the subgroup of $\pi_1(K_n)$ corresponding to $p$, which is isomorphic to $\pi_1(M_n)$. This subgroup is the kernel of $w_1$, the first Stiefel--Whitney class, which sends orientation--preserving loops to $0$ and orientation--reversing loops to $1$.
        Note that $w_1(K_n)(a)=1$ and $w_1(K_n)(b)=0$. Let $H$ be the subgroup of $\pi_1(K_n)$ generated by $a^2$ and $b$. Since $\pi_1(K_n)/H\cong \Z_2$, and both $a^2$ and $b$ are in $\text{ker}(\omega_1(K_n))$, $H=\ker(\omega_1(K_n))$. Applying the Reidemeister-Schreier method, the relations in $H$ are given by conjugating the relations in $G$ by $a$. Hence $H = \langle a^2,b|a(a^{2n})a^{-1},a(aba^{-1}b)a^{-1}\rangle$. Using the relations in $\pi_1(K_n)$, this group may be rewritten as \[H = \langle a^2,b|a^{2n},[a^2,b]\rangle = \langle c,d | c^n, [c,d]\rangle,\] where $c=a^2$ and $d=b$. Then by the classification of total spaces of $T^2$--bundles over $S^2$, we see that $M_n\cong S^1\times L(n,1)$. We can characterise $p_*\colon \pi_1(M_n)\to \pi_1(K_n)$ as sending $c$ to $a^2$ and $d$ to $b$.
        
        To verify that the Kirby diagram in the left of Figure \ref{fig:KB_bundles} is $K_n$, we compute its orientation double--cover using Proposition \ref{prop:dble_cover_alg}, and verify it's indeed is a copy of $S^1\times L(n,1)$. This Kirby calculation is shown in Figure \ref{fig:double_cover_pf}. Part (1) of the figure shows the orientation double cover of the Kirby diagram on the left of Figure \ref{fig:KB_bundles}. Then in (2), we cancel the $0$--handle on the right with the green-shaded $1$--handle, and simultaneously cancel a pair of $3$-- and $4$--handles. When we cancel the $0$--handle, the right--hand side of (1) is ``dragged'' through the green $1$--handle to become part of the left--hand side. In the process, the entire right--hand side reverses orientation, which is why the blue $2$--handle framing changes sign between (1) and (2). Next slide the black $2$--handle over the gray one and cancel the grey $2$--handle with an unshaded $1$--handle to arrive at (3). Sliding the blue $2$--handle over the red one gets us to (4). After this slide, the blue $2$-handle may be isotoped to be disjoint from the pink $1$--handle, as in (5). At this point, the blue $2$--handle is a zero framed unknot which may be isotoped away from the rest of the diagram. Canceling it with a $3$--handle gives (6), which is a Kirby diagram of $S^1\times L(n,1)$.

		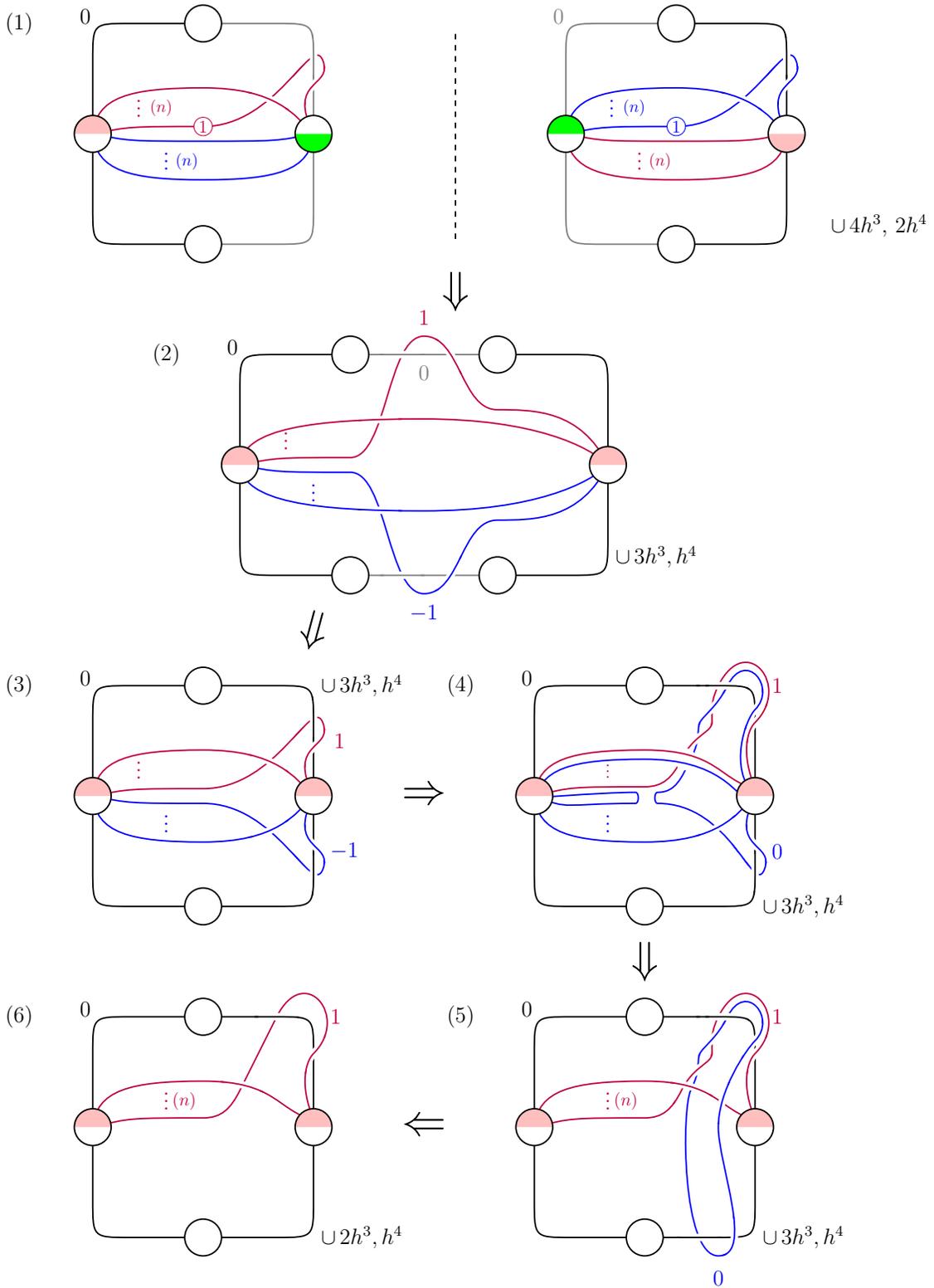
\begin{figure}
			\centering
			\resizebox{6in}{!}{
				
				\begin{tikzpicture}
					
					\begin{scope}[scale=.7]
						
						\draw(-5,3) node {(1)};
						
						\begin{knot}[
							clip width = 5pt,
							flip crossing/.list={2}
							]
							\strand[thick,color=gray] (.5,3) -- (2,3) .. controls +(1,0) and +(0,1) .. (3,2) -- (3,.5);
							\strand[thick, rotate = 90] (.5,3) -- (2,3) .. controls +(1,0) and +(0,1) .. (3,2) -- (3,.5);
							\strand[thick, rotate = 180](.5,3) -- (2,3) .. controls +(1,0) and +(0,1) .. (3,2) -- (3,.5);
							\strand[thick, rotate = -90,color=gray] (.5,3) -- (2,3) .. controls +(1,0) and +(0,1) .. (3,2) -- (3,.5);

							\strand[thick,color=purple] (-3,0) .. controls +(.1,1) and +(-2,0) .. (0,1.25) .. controls +(2,0) and +(-.5,.5).. (3,0);
							\strand[thick,color=purple] (-3,0) .. controls +(1,.2) and +(-2,0) .. (0,.2) .. controls +(2,0) and + (-.3,.75) .. (3.25,2) .. controls +(.3,-.75) and +(-.75,1.5) .. (3,0) ;
							
							\strand[thick,color=blue] (-3,0) .. controls +(.1,-1) and +(-2,0) .. (0,-1.25) .. controls +(2,0) and +(-.1,-1).. (3,0);
							\strand[thick,color=blue] (-3,0) .. controls +(1,-.2) and +(-2,0) .. (0,-.2) .. controls +(2,0) and +(-1,-.2).. (3,0);

						\end{knot}
						
						
						\filldraw[fill=white,draw=purple] (0,.2) circle [radius = .25cm] (0,.2) node[scale=.8,color=purple] {$1$};
						\draw (-1.75,.85) node[color=purple]{$\vdots$};
						\draw (-1.6, .7) node[anchor = west, scale=.75,color=purple] {($n$)};
						
						\draw  (-1,-.6) node[color=blue] {$\vdots$};
						\draw (-.9,-.75) node[anchor = west, scale =.75,color=blue] {($n$)};
						
						
						\draw (-3.2,3.2) node{$0$};
						
						
						\fill[color=white] (-3,0) circle [radius = .5cm];
						\fill[color=white] (3,0) circle [radius = .5cm];
						\fill[color=white] (0,3) circle [radius = .5cm];
						\fill[color=white] (0,-3) circle [radius = .5cm];
						
						\fill[color = pink] (-3.5,0) .. controls +(0,.2775) and +(-.27775,0) .. (-3,.5) .. controls +(.2775,0) and +(0,.2775) .. (-2.5,0);
						\fill[color = green] (3.5,0) .. controls +(0,-.2775) and +(.27775,0) .. (3,-.5) .. controls +(-.2775,0) and +(0,-.2775) .. (2.5,0);
						
						\draw[thick] (-3,0) circle [radius = .5cm];
						\draw[thick] (3,0) circle [radius = .5cm];
						\draw[thick] (0,3) circle [radius = .5cm];
						\draw[thick] (0,-3) circle [radius = .5cm];

					\end{scope}
					
					\draw[thick,dashed] (4.8,-2) -- (4.8,2);
					\draw (4.8,-3) node[scale=2] {$\Downarrow$};
					
					\begin{scope}[xshift = 9cm, scale = .7]
						\begin{knot}[
							clip width = 5pt,
							flip crossing/.list={2}
							]
							\strand[thick] (.5,3) -- (2,3) .. controls +(1,0) and +(0,1) .. (3,2) -- (3,.5);
							\strand[thick, rotate = 90, color=gray] (.5,3) -- (2,3) .. controls +(1,0) and +(0,1) .. (3,2) -- (3,.5);
							\strand[thick, rotate = 180,color=gray](.5,3) -- (2,3) .. controls +(1,0) and +(0,1) .. (3,2) -- (3,.5);
							\strand[thick, rotate = -90] (.5,3) -- (2,3) .. controls +(1,0) and +(0,1) .. (3,2) -- (3,.5);

							\strand[thick,color=blue] (-3,0) .. controls +(.1,1) and +(-2,0) .. (0,1.25) .. controls +(2,0) and +(-.5,.5).. (3,0);
							\strand[thick,color=blue] (-3,0) .. controls +(1,.2) and +(-2,0) .. (0,.2) .. controls +(2,0) and + (-.3,.75) .. (3.25,2) .. controls +(.3,-.75) and +(-.75,1.5) .. (3,0) ;
							
							\strand[thick,color=purple] (-3,0) .. controls +(.1,-1) and +(-2,0) .. (0,-1.25) .. controls +(2,0) and +(-.1,-1).. (3,0);
							\strand[thick,color=purple] (-3,0) .. controls +(1,-.2) and +(-2,0) .. (0,-.2) .. controls +(2,0) and +(-1,-.2).. (3,0);

						\end{knot}
						
						
						\filldraw[fill=white,draw=blue] (0,.2) circle [radius = .25cm] (0,.2) node[scale=.8,color=blue] {$1$};
						\draw (-1.75,.85) node[color=blue]{$\vdots$};
						\draw (-1.6, .7) node[anchor = west, scale=.75,color=blue] {($n$)};
						
						\draw  (-1,-.6) node[color=purple] {$\vdots$};
						\draw (-.9,-.75) node[anchor = west, scale =.75,color=purple] {($n$)};
						
						
						\draw (4,-2.5) node[anchor=west] {$\cup\, 4 h^3,\, 2 h^4$};
						\draw (-3.2,3.2) node[color=gray]{$0$};
						
						
						\fill[color=white] (-3,0) circle [radius = .5cm];
						\fill[color=white] (3,0) circle [radius = .5cm];
						\fill[color=white] (0,3) circle [radius = .5cm];
						\fill[color=white] (0,-3) circle [radius = .5cm];
						
						\fill[color = green] (-3.5,0) .. controls +(0,.2775) and +(-.27775,0) .. (-3,.5) .. controls +(.2775,0) and +(0,.2775) .. (-2.5,0);
						\fill[color = pink] (3.5,0) .. controls +(0,-.2775) and +(.27775,0) .. (3,-.5) .. controls +(-.2775,0) and +(0,-.2775) .. (2.5,0);
						
						\draw[thick] (-3,0) circle [radius = .5cm];
						\draw[thick] (3,0) circle [radius = .5cm];
						\draw[thick] (0,3) circle [radius = .5cm];
						\draw[thick] (0,-3) circle [radius = .5cm];

					\end{scope}
					
					\begin{scope} [scale=.7,yshift = -9cm, xshift = 6cm]
						
						\draw(-7,3) node {(2)};
						
						\begin{knot}[
							clip width = 5pt,
							flip crossing/.list={2,4}
							]
							\strand[thick,xshift=2cm] (.5,3) -- (2,3) .. controls +(1,0) and +(0,1) .. (3,2) -- (3,.5);
							\strand[thick,  xshift=-2cm, rotate = 90] (.5,3) -- (2,3) .. controls +(1,0) and +(0,1) .. (3,2) -- (3,.5);
							\strand[thick, xshift=-2cm, rotate = 180](.5,3) -- (2,3) .. controls +(1,0) and +(0,1) .. (3,2) -- (3,.5);
							\strand[thick,xshift=2cm, rotate = -90] (.5,3) -- (2,3) .. controls +(1,0) and +(0,1) .. (3,2) -- (3,.5);
							\strand[thick,color=gray](-2.5,3) -- (2.5,3);
							\strand[thick,color=gray](-2.5,-3) -- (2.5,-3);
							
							\strand[thick,color=purple] (-5,0) .. controls +(.1,1) and +(-3,0) .. (0,1.25) .. controls +(3,0) and +(-.5,.5).. (5,0);
							\strand[thick,color=purple] (-5,0) .. controls +(1,.2) and +(-2,0) .. (-2,.2) .. controls +(1,0) and + (-1,0) .. (0,3.5) .. controls + (1,0) and +(-1,0) .. (2,1.5) .. controls +(1,0) and +(-.75,1.5) .. (5,0) ;
							
							\strand[thick,color=blue] (-5,0) .. controls +(.1,-1) and +(-3,0) .. (0,-1.25) .. controls +(3,0) and +(-.5,-.5).. (5,0);
							\strand[thick,color=blue, yscale=-1] (-5,0) .. controls +(1,.2) and +(-2,0) .. (-2,.2) .. controls +(1,0) and + (-1,0) .. (0,3.5) .. controls + (1,0) and +(-1,0) .. (2,1.5) .. controls +(1,0) and +(-.75,1.5) .. (5,0) ;
							
						\end{knot}
						
						
						\draw(0,3.6) node[anchor=south, color = purple] {$1$};
						\draw(0,-3.6) node[anchor=north,color=blue] {$-1$};
						\draw (5,-2.5) node[anchor=west] {$\cup\, 3 h^3, h^4$};
						\draw (-5.2,3.2) node{$0$};
						\draw(0,2.9) node[anchor=north, color=gray] {$0$};
						\draw (-3.75,.8) node[color=purple]{$\vdots$};
						\draw  (-3,-.6) node[color=blue] {$\vdots$};
						
						
						\fill[color=white] (-5,0) circle [radius = .5cm];
						\fill[color=white] (5,0) circle [radius = .5cm];
						\fill[color=white] (-2,3) circle [radius = .5cm];
						\fill[color=white] (-2,-3) circle [radius = .5cm];
						\fill[color=white] (2,3) circle [radius = .5cm];
						\fill[color=white] (2,-3) circle [radius = .5cm];
						
						\fill[color = pink] (-5.5,0) .. controls +(0,.2775) and +(-.27775,0) .. (-5,.5) .. controls +(.2775,0) and +(0,.2775) .. (-4.5,0);
						\fill[color = pink] (5.5,0) .. controls +(0,+.2775) and +(.27775,0) .. (5,.5) .. controls +(-.2775,0) and +(0,.2775) .. (4.5,0);
						
						\draw[thick] (-5,0) circle [radius = .5cm];
						\draw[thick] (5,0) circle [radius = .5cm];
						\draw[thick] (-2,3) circle [radius = .5cm];
						\draw[thick] (-2,-3) circle [radius = .5cm];
						\draw[thick] (2,3) circle [radius = .5cm];
						\draw[thick] (2,-3) circle [radius = .5cm];
						
						\draw (-3,-4.5) node[scale=2,rotate = -15] {$\Downarrow$};
						
					\end{scope}
					
					\begin{scope}[scale=.7,xshift = 0cm, yshift = -18cm]
						
						\draw(-5,3) node {(3)};
						
						\begin{knot}[
							clip width = 5pt,
							flip crossing/.list={2,3}
							]
							\strand[thick] (.5,3) -- (2,3) .. controls +(1,0) and +(0,1) .. (3,2) -- (3,.5);
							\strand[thick, rotate = 90] (.5,3) -- (2,3) .. controls +(1,0) and +(0,1) .. (3,2) -- (3,.5);
							\strand[thick, rotate = 180](.5,3) -- (2,3) .. controls +(1,0) and +(0,1) .. (3,2) -- (3,.5);
							\strand[thick, rotate = -90] (.5,3) -- (2,3) .. controls +(1,0) and +(0,1) .. (3,2) -- (3,.5);

							\strand[thick,color=purple] (-3,0) .. controls +(.1,1) and +(-2,0) .. (0,1.25) .. controls +(2,0) and +(-.5,.5).. (3,0);
							\strand[thick,color=purple] (-3,0) .. controls +(1,.2) and +(-2,0) .. (0,.2) .. controls +(2,0) and + (-.3,.75) .. (3.25,2) .. controls +(.3,-.75) and +(-.75,1.5) .. (3,0) ;
							
							
							\strand[thick,color=blue, yscale=-1] (-3,0) .. controls +(.1,1) and +(-2,0) .. (0,1.25) .. controls +(2,0) and +(-.5,.5).. (3,0);
							\strand[thick,color=blue,yscale=-1] (-3,0) .. controls +(1,.2) and +(-2,0) .. (0,.2) .. controls +(2,0) and + (-.3,.75) .. (3.25,2) .. controls +(.3,-.75) and +(-.75,1.5) .. (3,0) ;

						\end{knot}
						
						
						\draw (-1.75,.85) node[color=purple]{$\vdots$};
						\draw  (-1,-.6) node[color=blue] {$\vdots$};
						\draw (3.35, 1.5) node[anchor=west,color=purple] {$1$};
						\draw (3.25, -1.5) node[anchor=west,color=blue] {$-1$};
						
						
						\draw (3,3) node[anchor=west] {$\cup\, 3 h^3, h^4$};
						\draw (-3.2,3.2) node{$0$};
						
						
						\fill[color=white] (-3,0) circle [radius = .5cm];
						\fill[color=white] (3,0) circle [radius = .5cm];
						\fill[color=white] (0,3) circle [radius = .5cm];
						\fill[color=white] (0,-3) circle [radius = .5cm];
						
						\fill[color = pink] (-3.5,0) .. controls +(0,.2775) and +(-.27775,0) .. (-3,.5) .. controls +(.2775,0) and +(0,.2775) .. (-2.5,0);
						\fill[color = pink] (3.5,0) .. controls +(0,+.2775) and +(.27775,0) .. (3,.5) .. controls +(-.2775,0) and +(0,.2775) .. (2.5,0);
						
						\draw[thick] (-3,0) circle [radius = .5cm];
						\draw[thick] (3,0) circle [radius = .5cm];
						\draw[thick] (0,3) circle [radius = .5cm];
						\draw[thick] (0,-3) circle [radius = .5cm];

					\end{scope}
					
					\begin{scope}[scale=.7]
						\draw(6,-18) node [scale=2] {$\Rightarrow$};
					\end{scope}

					\begin{scope}[scale=.7, xshift = 12cm, yshift =-18cm]
						
						\draw(-5,3) node {(4)};
						
						\begin{knot}[
							clip width = 5pt,
							flip crossing/.list={2,4,5,8,9}
							]
							\strand[thick] (.5,3) -- (2,3) .. controls +(1,0) and +(0,1) .. (3,2) -- (3,.5);
							\strand[thick, rotate = 90] (.5,3) -- (2,3) .. controls +(1,0) and +(0,1) .. (3,2) -- (3,.5);
							\strand[thick, rotate = 180](.5,3) -- (2,3) .. controls +(1,0) and +(0,1) .. (3,2) -- (3,.5);
							\strand[thick, rotate = -90] (.5,3) -- (2,3) .. controls +(1,0) and +(0,1) .. (3,2) -- (3,.5);

							\strand[thick,color=purple] (-3,0) .. controls +(1,.25) and +(-2,0) .. (0,.25) .. controls +(.5,0) and +(-.25,-.5) .. (1,1) .. controls + (.3,.6) and + (-.1,-.2) .. (1.8,1.9) .. controls +(.1,.2) and +(-.3,-.6).. (2,3) .. controls +(.75,1.5) and +(1,1) .. (3,2) .. controls +(-.25,-.25) and + (-.5,1) .. (3.1,0) ;
							
							
							\strand[thick,color=blue,yscale=-1] (.35,.2) .. controls +(2,0) and + (-.3,.75) .. (3.25,2) .. controls +(.3,-.75) and +(-.75,1.5) .. (3,0) ;
							
							\strand[thick,color=blue, yscale=-1] (-3,0) .. controls +(.1,1) and +(-2,0) .. (0,1.25)  .. controls +(2,0) and +(-.5,.5).. (3,0);
							
							
							\strand[thick,color=blue] (.35,.1) .. controls +(.5,0) and +(-.25,-.5) .. (1.25,1) .. controls + (.3,.6) and + (-.1,-.2) .. (1.5,2) .. controls +(.1,.2) and +(-.3,-.6).. (2.25,3) .. controls +(.5,1) and +(.6,.6) .. (3,2.35) .. controls +(-.25,-.25) and + (-.75,.75) .. (3,0);

							\strand[thick,color=blue] (.35,.1) .. controls +(-.1,0) and +(-.1,0) .. (.35,-.2);
							
							\strand[thick,color=blue] (-3,0) .. controls +(1,-.4) and +(-2,0).. (-.25,-.2) .. controls +(.1,0) and +(.1,0) .. (-.25,.1) .. controls +(-1,0) and + (-1,-.1) .. (-2.5,0);

						\end{knot}
						
						
						\fill[color=white] (1.1,1) circle [radius = .3cm];
						
						\draw[thick,color=purple] (-3,0) .. controls +(.1,1) and +(-2,0) .. (0,1.25) .. controls +(2,0) and +(-1,.25).. (3,.2);
						
						\draw[thick,color=blue] (-3,-.25) .. controls +(.1,1) and +(-2,0) .. (0,1)  .. controls +(2,0) and +(-.5,.5).. (3,-.4);

						
						\draw (-1,.8) node[color=purple,scale=.8]{$\vdots$};
						\draw  (-1,-.6) node[color=blue] {$\vdots$};
						\draw (3.25, 3) node[anchor=west,color=purple] {$1$};
						\draw (3.25, -1.5) node[anchor=west,color=blue] {$0$};
						
						
						\draw (3,-3) node[anchor=west] {$\cup\, 3 h^3, h^4$};
						\draw (-3.2,3.2) node{$0$};
						

						\fill[color=white] (-3,0) circle [radius = .5cm];
						\fill[color=white] (3,0) circle [radius = .5cm];
						\fill[color=white] (0,3) circle [radius = .5cm];
						\fill[color=white] (0,-3) circle [radius = .5cm];
						
						\fill[color = pink] (-3.5,0) .. controls +(0,.2775) and +(-.27775,0) .. (-3,.5) .. controls +(.2775,0) and +(0,.2775) .. (-2.5,0);
						\fill[color = pink] (3.5,0) .. controls +(0,+.2775) and +(.27775,0) .. (3,.5) .. controls +(-.2775,0) and +(0,.2775) .. (2.5,0);
						
						\draw[thick] (-3,0) circle [radius = .5cm];
						\draw[thick] (3,0) circle [radius = .5cm];
						\draw[thick] (0,3) circle [radius = .5cm];
						\draw[thick] (0,-3) circle [radius = .5cm];

					\end{scope}
					
					\begin{scope}[scale=.7]
						\draw(12,-22.5) node [scale=2] {$\Downarrow$};
					\end{scope}
					
					\begin{scope}[scale=.7, xshift = 12cm, yshift = -27cm]
						
						\draw(-5,3) node {(5)};
						
						\begin{knot}[
							clip width = 5pt,
							flip crossing/.list={2,4,5,9,11}
							]
							\strand[thick] (.5,3) -- (2,3) .. controls +(1,0) and +(0,1) .. (3,2) -- (3,.5);
							\strand[thick, rotate = 90] (.5,3) -- (2,3) .. controls +(1,0) and +(0,1) .. (3,2) -- (3,.5);
							\strand[thick, rotate = 180](.5,3) -- (2,3) .. controls +(1,0) and +(0,1) .. (3,2) -- (3,.5);
							\strand[thick, rotate = -90] (.5,3) -- (2,3) .. controls +(1,0) and +(0,1) .. (3,2) -- (3,.5);

							\strand[thick,color=purple] (-3,0) .. controls +(1,.25) and +(-2,0) .. (0,.25) .. controls +(.5,0) and +(-.25,-.5) .. (1,1) .. controls + (.3,.6) and + (-.1,-.2) .. (1.8,1.9) .. controls +(.1,.2) and +(-.3,-.6).. (2,3) .. controls +(.75,1.5) and +(1,1) .. (3,2) .. controls +(-.25,-.25) and + (-.5,1) .. (3.1,0) ;
							
							\strand[thick,color=purple] (-3,0) .. controls +(.1,1) and +(-2,0) .. (0,1.25) .. controls +(2,0) and +(-1,.25).. (3,.2);

							
							\strand[thick,color=blue] (2,0) .. controls +(0,-1) and +(1,0) .. (2,-3.5) .. controls +(-1,0) and +(-.25,-.5) .. (1.25,1) .. controls + (.3,.6) and + (-.1,-.2) .. (1.5,2) .. controls +(.1,.2) and +(-.3,-.6).. (2.25,3) .. controls +(.5,1) and +(.6,.6) .. (3,2.35) .. controls +(-.25,-.25) and + (0,1) .. (2,0);

						\end{knot}

						
						\draw (-1,.85) node[color=purple]{$\vdots$};
						\draw (-1,.7) node[color=purple,anchor=west, scale=.8]{($n$)};
						\draw (3.25, 3) node[anchor=west,color=purple] {$1$};
						\draw (2,-3.7) node [anchor=north, color=blue] {$0$};
						
						
						\draw (3,-3) node[anchor=west] {$\cup\, 3 h^3, h^4$};
						\draw (-3.2,3.2) node{$0$};
						

						\fill[color=white] (-3,0) circle [radius = .5cm];
						\fill[color=white] (3,0) circle [radius = .5cm];
						\fill[color=white] (0,3) circle [radius = .5cm];
						\fill[color=white] (0,-3) circle [radius = .5cm];
						
						\fill[color = pink] (-3.5,0) .. controls +(0,.2775) and +(-.27775,0) .. (-3,.5) .. controls +(.2775,0) and +(0,.2775) .. (-2.5,0);
						\fill[color = pink] (3.5,0) .. controls +(0,+.2775) and +(.27775,0) .. (3,.5) .. controls +(-.2775,0) and +(0,.2775) .. (2.5,0);
						
						\draw[thick] (-3,0) circle [radius = .5cm];
						\draw[thick] (3,0) circle [radius = .5cm];
						\draw[thick] (0,3) circle [radius = .5cm];
						\draw[thick] (0,-3) circle [radius = .5cm];

					\end{scope}
					
					\begin{scope}[scale=.7]
						\draw(6,-27) node [scale=2] {$\Leftarrow$};
					\end{scope}
					
					\begin{scope}[scale=.7, xshift = 0cm, yshift = -27cm]
						
						\draw(-5,3) node {(6)};
						
						\begin{knot}[
							clip width = 5pt,
							flip crossing/.list={2,3}
							]
							\strand[thick] (.5,3) -- (2,3) .. controls +(1,0) and +(0,1) .. (3,2) -- (3,.5);
							\strand[thick, rotate = 90] (.5,3) -- (2,3) .. controls +(1,0) and +(0,1) .. (3,2) -- (3,.5);
							\strand[thick, rotate = 180](.5,3) -- (2,3) .. controls +(1,0) and +(0,1) .. (3,2) -- (3,.5);
							\strand[thick, rotate = -90] (.5,3) -- (2,3) .. controls +(1,0) and +(0,1) .. (3,2) -- (3,.5);

							\strand[thick,color=purple] (-3,0) .. controls +(1,.25) and +(-2,0) .. (0,.25) .. controls +(.5,0) and +(-.25,-.5) .. (1,1) -- (2,3) .. controls +(.75,1.5) and +(1,1) .. (3,2) .. controls +(-.25,-.25) and + (-.5,1) .. (3.1,0) ;
							
							\strand[thick,color=purple] (-3,0) .. controls +(.1,1) and +(-2,0) .. (0,1.25) .. controls +(2,0) and +(-1,.25).. (3,.2);

						\end{knot}

						
						\draw (-1,.85) node[color=purple]{$\vdots$};
						\draw (-1,.7) node[color=purple,anchor=west, scale=.8]{($n$)};
						\draw (3.25, 3) node[anchor=west,color=purple] {$1$};
						
						
						\draw (3,-3) node[anchor=west] {$\cup\, 2 h^3, h^4$};
						\draw (-3.2,3.2) node{$0$};
						

						\fill[color=white] (-3,0) circle [radius = .5cm];
						\fill[color=white] (3,0) circle [radius = .5cm];
						\fill[color=white] (0,3) circle [radius = .5cm];
						\fill[color=white] (0,-3) circle [radius = .5cm];
						
						\fill[color = pink] (-3.5,0) .. controls +(0,.2775) and +(-.27775,0) .. (-3,.5) .. controls +(.2775,0) and +(0,.2775) .. (-2.5,0);
						\fill[color = pink] (3.5,0) .. controls +(0,+.2775) and +(.27775,0) .. (3,.5) .. controls +(-.2775,0) and +(0,.2775) .. (2.5,0);
						
						\draw[thick] (-3,0) circle [radius = .5cm];
						\draw[thick] (3,0) circle [radius = .5cm];
						\draw[thick] (0,3) circle [radius = .5cm];
						\draw[thick] (0,-3) circle [radius = .5cm];

					\end{scope}
					
				\end{tikzpicture}
				
			}

			\caption{The ellipses and label $(n)$ indicate that there are $n$  strands. In (1), all $n$ strands run parallel to each other except for the strands with $+1$--framing.}
			\label{fig:double_cover_pf}
		\end{figure}
		
	\end{proof}

	\subsection{Nonorientable genus two SBLFs with only indefinite fold singularity.}\label{sec:g2_fold_only}
	
	Next, we consider nonorientable genus two SBLFs with an indefinite fold but no Lefschetz singularities. The total space $X$ of this class of SBLFs may be decomposed as $X=Kb\times D^2\cup H \cup S^2\times D^2$, where $H$ is a round $2$--handle cobordism, such that $\partial H = \partial_+ H \sqcup \partial_- H$ with $\partial_+ H \cong Kb \x S^1$ and $\partial_-H\cong S^1\times S^2$. We obtain a handle diagram for the total space of the SBLF by considering how $H$ is glued to $\partial (Kb\times D^2)$, and then in turn how $S^2\times D^2$ is glued to $\partial(Kb\times D^2\cup H)=\partial_- H$.

    \begin{proposition} \label{prop:fold_only_g2}
	Let $f \colon X \to S^2$ be an SBLF with $Z_f \neq \emptyset$, $C_f= \emptyset$ and higher-genus fiber $F_+=Kb$. Then $X$ is diffeomorphic to one of the following:\begin{itemize}
        \item $S^1\twist S^3 \# S^2\times S^2$
	\item $S^1\twist S^3 \# 2 \CP$
	\item $N_n$, whose orientation--double cover is $L_n\# S^2\times S^2$
	\item $N'_n$, whose orientation--double cover is $L'_n\# S^2\times S^2$, 
    \end{itemize} where the manifolds $N_n$ and $N'_n$ have the Kirby diagrams as shown in Figure \ref{fig:fold_only_1}.
    \end{proposition}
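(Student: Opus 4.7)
The plan is to reconstruct $X$ from its SBLF data, classify the discrete choices that arise, and identify each outcome with one of the four listed models via Kirby calculus and orientation double cover computations. I would begin with the decomposition $X = X_+ \cup X_0 \cup X_-$ from Section~\ref{sec:SBLF_topology}: since $C_f = \emptyset$, the fibration $X_+ \to D^2_+$ is trivial, so $X_+ \cong Kb \times D^2$. The lower-genus fiber $F_-$ is obtained from $F_+ = Kb$ by surgery along the fold vanishing cycle $c$; since by (iii) of Definition~\ref{def:SBLF} the fibers must be connected, $c$ is a nonseparating two-sided simple closed curve, and such a surgery on $Kb$ yields $F_- \cong S^2$. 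Hence $X_- \cong S^2 \times D^2$, and since $F_-$ is orientable, the $3$--handle inside the round $2$--handle cobordism $X_0$ is necessarily twisted, per the discussion in Section~\ref{sec:SBLF_topology}.

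Next, I would enumerate the remaining discrete data. The curve $c$ is unique up to $\M(Kb)$ because the mapping class group acts transitively on isotopy classes of nonseparating two-sided simple closed curves on $Kb$. The round $2$--handle attachment is then parameterized by an integer $n \in \Z_{\geq 0}$ recording the twist of the $S^1$-family of $2$--handles as it traverses $\nu Z_f$, with $n \mapsto -n$ identified by reflection. Additionally, the gluing of $X_- = S^2 \times D^2$ to $\partial(X_+ \cup X_0)$ is not unique up to isotopy because $\D^+(S^2)$ is not simply connected, introducing a $\Z/2$ choice (consistent with the Earle--Eells discussion in Section~\ref{sec:SBLF_topology}). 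For each pair $(n,\epsilon)$ I would explicitly draw the Kirby diagram of $X$: starting from $Kb \times D^2$, attach the $2$--handle $h_2$ with $n$--twist along $c$, then the twisted $3$--handle running twice over $h_2$, and finally the upside-down $2$--handle $h'_2$ coming from $S^2 \times D^2$ along a meridian whose linking incorporates the $\Z/2$ ambiguity. This should produce precisely the diagrams for $N_n$ and $N'_n$ in Figure~\ref{fig:fold_only_1}.

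Finally, I would verify the two families are genuinely distinct by computing orientation double covers using Proposition~\ref{prop:dble_cover_alg}, obtaining $L_n \# S^2 \times S^2$ in the $N_n$ case and $L'_n \# S^2 \times S^2$ in the $N'_n$ case, where $L'_n$ is the twist-spun analog of $L_n$ as encountered in the earlier example of $R_n$. Identifying the special small cases by Kirby calculus then yields $N_0 \cong S^1\twist S^3 \# S^2\times S^2$ and $N'_0 \cong S^1\twist S^3 \# 2\CP$, listed separately in the statement for emphasis. The main obstacle I anticipate is tracking the $\Z/2$ gluing ambiguity cleanly through the Kirby diagram---identifying precisely how the two fiber-preserving gluings translate into distinct attaching circles for $h'_2$, and confirming via the double cover computation that $N_n$ and $N'_n$ are non-diffeomorphic rather than being identified by some hidden symmetry of the diagram---while the remaining ingredients (mapping class group orbit of $c$, triviality of $X_+$ and $X_-$, and the standard handle decomposition recipe) are routine given the background from Section~\ref{sec:SBLF_topology}.
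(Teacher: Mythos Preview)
Your approach is essentially the same as the paper's: decompose $X$ into $Kb \times D^2 \cup H \cup S^2 \times D^2$, parameterize the gluings by an integer $n$ coming from $\pi_1(\Diff(Kb)) \cong \Z$ and a $\Z/2$ coming from the two fiber-preserving self-diffeomorphisms of $S^1 \times S^2$, draw the resulting Kirby diagrams, and verify the identifications and double covers.

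There is one point where your description diverges from the paper and would lead you astray if taken literally. You write that one should ``attach the $2$--handle $h_2$ with $n$--twist along $c$'' and then attach $h'_2$ ``along a meridian.'' In fact the Kirby diagram of $Kb \times D^2 \cup H$ is \emph{independent} of the gluing map $\phi^n$: since $\phi$ fixes $c$ setwise, the fold $2$--handle $h_2$ is always the same $0$--framed curve, and the diagram of $X_+ \cup X_0$ records nothing about $n$. The integer parameter only becomes visible when you attach $h'_2$ from $S^2 \times D^2$: the twist $\phi^n$ translates the fibration structure on $\partial_- H$ so that the attaching circle of $h'_2$ runs $2n$ times over the twisted $1$--handle (rather than being a simple meridian), and the $\Z/2$ then shows up as the parity of its framing coefficient $m$. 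This is exactly what Figure~\ref{fig:fold_only_1} depicts. So the obstacle you flag at the end---tracking the gluing ambiguity through to the attaching circle of $h'_2$---is indeed the crux, but it carries the full $\Z$ parameter, not just the $\Z/2$.
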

	
    \begin{proof}
		
	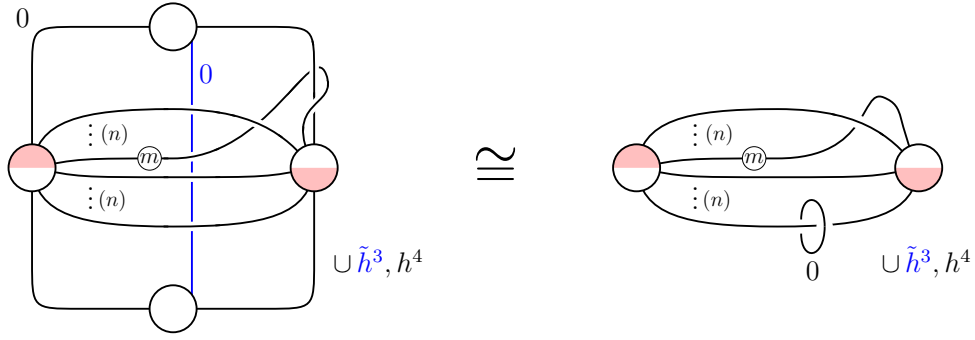
\begin{figure}
			\centering

			\resizebox{!}{1.75in}{
			\begin{tikzpicture}
				
				\begin{scope}[scale=.7]
					
					\begin{knot}[
						clip width = 5pt,
						flip crossing/.list={2}
						]
						\strand[thick] (.5,3) -- (2,3) .. controls +(1,0) and +(0,1) .. (3,2) -- (3,.5);
						\strand[thick, rotate = 90] (.5,3) -- (2,3) .. controls +(1,0) and +(0,1) .. (3,2) -- (3,.5);
						\strand[thick, rotate = 180](.5,3) -- (2,3) .. controls +(1,0) and +(0,1) .. (3,2) -- (3,.5);
						\strand[thick, rotate = -90] (.5,3) -- (2,3) .. controls +(1,0) and +(0,1) .. (3,2) -- (3,.5);

						\strand[thick] (-3,0) .. controls +(.1,1) and +(-2,0) .. (0,1.25) .. controls +(2,0) and +(-.5,.5).. (3,0);
						\strand[thick] (-3,0) .. controls +(1,.2) and +(-2,0) .. (0,.2) .. controls +(2,0) and + (-.3,.75) .. (3.25,2) .. controls +(.3,-.75) and +(-.75,1.5) .. (3,0) ;
						
						\strand[thick] (-3,0) .. controls +(.1,-1) and +(-2,0) .. (0,-1.25) .. controls +(2,0) and +(-.1,-1).. (3,0);
						\strand[thick] (-3,0) .. controls +(1,-.2) and +(-2,0) .. (0,-.2) .. controls +(2,0) and +(-1,-.2).. (3,0);
						
						\strand[thick,color = blue] (.4,3)--(.4,-3);

					\end{knot}
					
					
					\draw (.7,2) node[color=blue] {$0$};
					
					\filldraw[fill=white] (-.5,.2) circle [radius = .25cm] (-.5,.2) node[scale=.8] {$m$};
					\draw (-1.75,.85) node{$\vdots$};
					\draw (-1.7, .7) node[anchor = west, scale=.75] {($n$)};
					
					\draw  (-1.75,-.5) node {$\vdots$};
					\draw (-1.75,-.7) node[anchor = west, scale =.75] {($n$)};
					
					
					\draw (3.2,-2) node[anchor=west] {$\cup\, \textcolor{blue}{\tilde h^3}, h^4$};
					\draw (-3.2,3.2) node{$0$};
					
					
					\fill[color=white] (-3,0) circle [radius = .5cm];
					\fill[color=white] (3,0) circle [radius = .5cm];
					\fill[color=white] (0,3) circle [radius = .5cm];
					\fill[color=white] (0,-3) circle [radius = .5cm];
					
					\fill[color = pink] (-3.5,0) .. controls +(0,.2775) and +(-.27775,0) .. (-3,.5) .. controls +(.2775,0) and +(0,.2775) .. (-2.5,0);
					\fill[color = pink] (3.5,0) .. controls +(0,-.2775) and +(.27775,0) .. (3,-.5) .. controls +(-.2775,0) and +(0,-.2775) .. (2.5,0);
					
					\draw[thick] (-3,0) circle [radius = .5cm];
					\draw[thick] (3,0) circle [radius = .5cm];
					\draw[thick] (0,3) circle [radius = .5cm];
					\draw[thick] (0,-3) circle [radius = .5cm];
					
				\end{scope}
				
				\draw (4.8,0) node {\huge $\cong$};
				
				\begin{scope}[xshift = 9cm, scale = .7]
					
					\begin{knot}[
						clip width = 7pt,
						]

						\strand[thick] (-3,0) .. controls +(.1,1) and +(-2,0) .. (0,1.25) .. controls +(2,0) and +(-.5,.5).. (3,0);
						\strand[thick] (-3,0) .. controls +(1,.2) and +(-2,0) .. (0,.2) .. controls +(2,0) and + (-.5,.2) .. (2.25,1.5) .. controls +(.5,-.2) and +(-.5,1) .. (3,0) ;
						
						\strand[thick] (-3,0) .. controls +(.1,-1) and +(-2,0) .. (0,-1.25) .. controls +(2,0) and +(-.1,-1).. (3,0);
						\strand[thick] (-3,0) .. controls +(1,-.2) and +(-2,0) .. (0,-.2) .. controls +(2,0) and +(-1,-.2).. (3,0);

					\end{knot}
					
					
					\fill[white](1,-1.25) circle[radius = 5pt];
					\draw[thick] (1,-1.25) .. controls + (0,.7) and + (0,.6) .. (.5,-1.1);
					\draw[thick] (1,-1.25) .. controls + (0,-.7) and + (0,-.6) .. (.5,-1.4);
					\draw(.75, -2.2) node {$0$};

					
					\filldraw[fill=white] (-.5,.2) circle [radius = .25cm] (-.5,.2) node[scale=.8] {$m$};
					\draw (-1.75,.85) node{$\vdots$};
					\draw (-1.7, .7) node[anchor = west, scale=.75] {($n$)};
					
					\draw  (-1.75,-.5) node {$\vdots$};
					\draw (-1.75,-.7) node[anchor = west, scale =.75] {($n$)};
					
					
					\draw (2,-2) node[anchor=west] {$\cup\, \textcolor{blue}{\tilde h^3}, h^4$};
					
					
					\fill[color=white] (-3,0) circle [radius = .5cm];
					\fill[color=white] (3,0) circle [radius = .5cm];
					
					\fill[color = pink] (-3.5,0) .. controls +(0,.2775) and +(-.27775,0) .. (-3,.5) .. controls +(.2775,0) and +(0,.2775) .. (-2.5,0);
					\fill[color = pink] (3.5,0) .. controls +(0,-.2775) and +(.27775,0) .. (3,-.5) .. controls +(-.2775,0) and +(0,-.2775) .. (2.5,0);
					
					\draw[thick] (-3,0) circle [radius = .5cm];
					\draw[thick] (3,0) circle [radius = .5cm];
					
				\end{scope}
				
			\end{tikzpicture}
			}

			\caption{A Kirby diagram of $N_n$ for $m$ even and $N'_n$ for $m$ odd.}
			\label{fig:fold_only_1}
		\end{figure}
		
		A Kirby diagram for $X$ is obtained by adding the round $2$--handle cobordism $H$ to $Kb\times D^2$. The round $2$--handle is attached via a fiber preserving diffeomorphism of $S^1\times Kb$, which may be regarded as an element of $\pi_1(\Diff(Kb))$. By Lemma \ref{lem:diff_KB_gen}, $H$ is glued along $\partial_+ H$ via $\phi^k$ for some $k\geq 0$. For the lower-genus fibers of $X$ to be connected, the $0$--framed $2$--handle of $H$ must trace the unique (up to isotopy) nonseparating two-sided curve on $Kb$. This completely determines the Kirby diagram for $D^2\times Kb\cup H$; it's a copy of $D^2\times Kb$ with an added $0$--framed $2$--handle going over the untwisted $1$--handle, plus a $3$--handle. A priori, this handle decomposition does not say anything about the gluing map $\phi^k\colon \partial_+ H\to \partial(Kb\times D^2)$. Rather, this choice of gluing becomes apparent in the next step, when we attach $S^2\times D^2$. Let $h$ denote the $2$--handle of $S^2\times D^2$. When $H$ is attached to $Kb\times D^2$ by $\phi^k$, the entire fibration structure on $\partial_+H$ is translated, in the sense that it goes $2k$ times over the twisted $1$--handle on the $Kb$ fiber. As a result, the attaching circle of $h$ goes $2k$ times over the twisted $1$--handle. (For an orientable analog, cf. the broken Lefschetz fibration on $S^4$ \cite[Example 3.5]{baykurPJM}, \cite[Section 8.2]{ADK}.) The last thing to consider is that there are two fiber-preserving diffeomorphisms of $S^1\times S^2$, which give two possible ways to glue $\partial(S^2\times D^2)$ to $\partial_- H$. These two choices correspond to $h$ having an even or odd framing $m$, as in Figure \ref{fig:fold_only_1}.
		
	With that, we've exhausted all possible handle decompositions for $X$. A Kirby diagram is now obtained from that of $Kb \times D^2 \cup H$ by adding  $h$ and the remaining $4$--handle. The attaching circle of $h$ is either a meridian of the $2$--handle of the regular $Kb$ fiber or more generally is as in the left of Figure \ref{fig:fold_only_1}. In the latter case, one may slide the fiber $2$--handle over the fold $2$--handle and then cancel the fold $2$--handle and a $1$--handle, so that the Kirby diagram simplifies as shown on the right of Figure \ref{fig:fold_only_1}. If the attaching circle of $h$ forms a meridian, one can use a few simple handle moves to identify the total space as $S^1\twist S^3\# S^2\times S^2$ or $S^1\twist S^3\# \CP\# \overline{\CP} \cong S^1\twist S^3\# 2 \CP$, depending on whether $m$ is even or odd. If $h$ goes over the twisted $1$--handle $2n$ times and has even (resp. odd) framing, its orientation double-cover is  $L_n\# S^2\times S^2$ (resp.  $L'_n\# S^2\times S^2$). Figure \ref{fig:fold_dble_cover} shows the Kirby calculus verifying this last claim, and we proceed by summarizing the Kirby moves involved below.
    
    Figure \ref{fig:fold_dble_cover} (1) applies Proposition \ref{prop:dble_cover_alg} to give the orientation double cover of $M$, based off its Kirby diagram on the right of Figure \ref{fig:fold_only_1}. Then to arrive at (2), we cancel the green $1$--handle and the right-most $0$--handle. Similar to step (1) $\Rightarrow$ (2) in Figure \ref{fig:double_cover_pf}, this step reverses the orientation of the right--hand side of (1). Then slide the black $(-m)$--framed $2$--handle over the blue $m$--framed $2$--handle to arrive at (3). After being slid, the black $2$--handle may be isotoped to not go over the pink $1$--handle, as in (4). Then slide the black (non-meridian) $2$--handle $m+1$ times over the blue meridian to arrive at (5). After sliding the blue meridian over the black one, we arrive at the final Kirby diagram given in (6), which is $L_n\#S^2\times S^2$ for $m$ even and $L'_n\# S^2\times S^2$ for $m$ odd. Note that when the attaching circle of $h$ goes over the twisted $1$--handle twice, Figure \ref{fig:fold_only_1} is an $S^2$--bundle over $\RP$, and the calculation of the orientation double cover in Figure \ref{fig:fold_dble_cover} gives a final space of $S^2\times S^2$,
    as one would expect. Setting $N_0:= S^1\twist S^3 \# S^2\times S^2$ and $N'_0 := S^1\twist S^3 \# 2\CP$, the families $\{N_n\}_{n\geq 0}$ and $\{N'_n\}_{n\geq 0}$ cover the total spaces of all nonorientable genus two SBLFs in this case.
      
			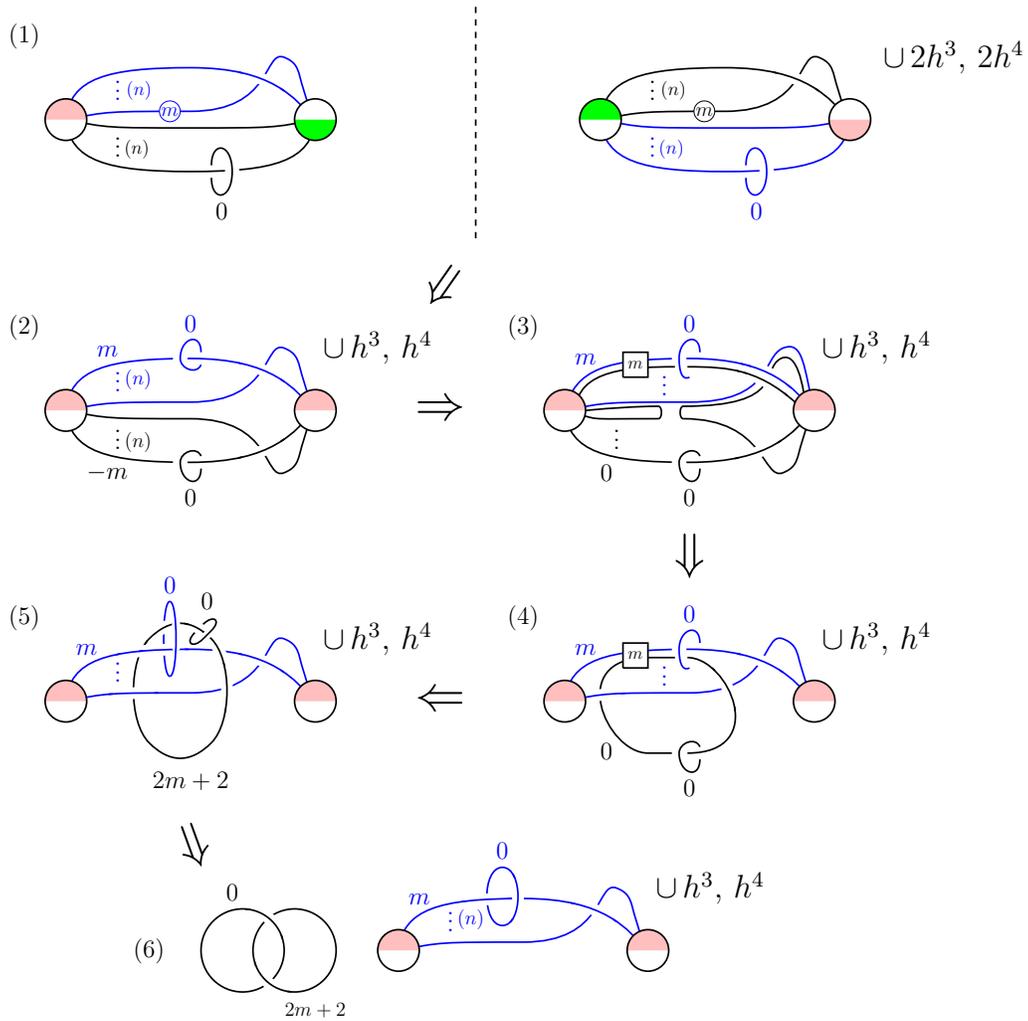
\begin{figure}
			\centering
            
			\resizebox{5.5in}{!}{
				
				\begin{tikzpicture}
					
					\begin{scope}[scale=.7]
						
						\draw(-4,2) node {(1)};
						
						\begin{knot}[
							clip width = 7pt,
							]

							\strand[thick,color=blue] (-3,0) .. controls +(.1,1) and +(-2,0) .. (0,1.25) .. controls +(2,0) and +(-.5,.5).. (3,0);
							\strand[thick,color=blue] (-3,0) .. controls +(1,.2) and +(-2,0) .. (0,.2) .. controls +(2,0) and + (-.5,.2) .. (2.25,1.5) .. controls +(.5,-.2) and +(-.5,1) .. (3,0) ;
							
							\strand[thick] (-3,0) .. controls +(.1,-1) and +(-2,0) .. (0,-1.25) .. controls +(2,0) and +(-.1,-1).. (3,0);
							\strand[thick] (-3,0) .. controls +(1,-.2) and +(-2,0) .. (0,-.2) .. controls +(2,0) and +(-1,-.2).. (3,0);

						\end{knot}
						
						
						\fill[white](1,-1.25) circle[radius = 5pt];
						\draw[thick] (1,-1.25) .. controls + (0,.7) and + (0,.6) .. (.5,-1.1);
						\draw[thick] (1,-1.25) .. controls + (0,-.7) and + (0,-.6) .. (.5,-1.4);
						\draw(.75, -2.2) node {$0$};

						
						\filldraw[fill=white,draw=blue] (-.5,.2) circle [radius = .25cm] (-.5,.2) node[scale=.8,color=blue] {$m$};
						\draw (-1.75,.85) node[color=blue]{$\vdots$};
						\draw (-1.7, .7) node[anchor = west, scale=.75,color=blue] {($n$)};
						
						\draw  (-1.75,-.5) node {$\vdots$};
						\draw (-1.75,-.7) node[anchor = west, scale =.75] {($n$)};

						
						\fill[color=white] (-3,0) circle [radius = .5cm];
						\fill[color=white] (3,0) circle [radius = .5cm];
						
						\fill[color = pink] (-3.5,0) .. controls +(0,.2775) and +(-.27775,0) .. (-3,.5) .. controls +(.2775,0) and +(0,.2775) .. (-2.5,0);
						\fill[color = green] (3.5,0) .. controls +(0,-.2775) and +(.27775,0) .. (3,-.5) .. controls +(-.2775,0) and +(0,-.2775) .. (2.5,0);
						
						\draw[thick] (-3,0) circle [radius = .5cm];
						\draw[thick] (3,0) circle [radius = .5cm];
						
					\end{scope}
					
					\draw[thick,dashed] (4.8,-2) -- (4.8,2);
					
					\draw (4.25,-2.8) node[scale=2,rotate = -35] {$\Downarrow$}; 
					
					\begin{scope}[xshift = 9cm, scale = .7]
						
						\begin{knot}[
							clip width = 7pt,
							]

							\strand[thick] (-3,0) .. controls +(.1,1) and +(-2,0) .. (0,1.25) .. controls +(2,0) and +(-.5,.5).. (3,0);
							\strand[thick] (-3,0) .. controls +(1,.2) and +(-2,0) .. (0,.2) .. controls +(2,0) and + (-.5,.2) .. (2.25,1.5) .. controls +(.5,-.2) and +(-.5,1) .. (3,0) ;
							
							\strand[thick,color=blue] (-3,0) .. controls +(.1,-1) and +(-2,0) .. (0,-1.25) .. controls +(2,0) and +(-.1,-1).. (3,0);
							\strand[thick,color=blue] (-3,0) .. controls +(1,-.2) and +(-2,0) .. (0,-.2) .. controls +(2,0) and +(-1,-.2).. (3,0);

						\end{knot}
						
						
						\fill[white](1,-1.25) circle[radius = 5pt];
						\draw[thick, color=blue] (1,-1.25) .. controls + (0,.7) and + (0,.6) .. (.5,-1.1);
						\draw[thick, color=blue] (1,-1.25) .. controls + (0,-.7) and + (0,-.6) .. (.5,-1.4);
						\draw(.75, -2.2) node[color=blue] {$0$};

						
						\filldraw[fill=white] (-.5,.2) circle [radius = .25cm] (-.5,.2) node[scale=.8] {$m$};
						\draw (-1.75,.85) node{$\vdots$};
						\draw (-1.7, .7) node[anchor = west, scale=.75] {($n$)};
						
						\draw  (-1.75,-.5) node[color=blue] {$\vdots$};
						\draw (-1.75,-.7) node[anchor = west, scale =.75,color=blue] {($n$)};
						
						
						\draw (5.5,1.5) node[scale=1.3] {$\cup\, 2 h^3,\, 2 h^4$};
						
						
						\fill[color=white] (-3,0) circle [radius = .5cm];
						\fill[color=white] (3,0) circle [radius = .5cm];
						
						\fill[color = green] (-3.5,0) .. controls +(0,.2775) and +(-.27775,0) .. (-3,.5) .. controls +(.2775,0) and +(0,.2775) .. (-2.5,0);
						\fill[color = pink] (3.5,0) .. controls +(0,-.2775) and +(.27775,0) .. (3,-.5) .. controls +(-.2775,0) and +(0,-.2775) .. (2.5,0);
						
						\draw[thick] (-3,0) circle [radius = .5cm];
						\draw[thick] (3,0) circle [radius = .5cm];
						
					\end{scope}
					
					\begin{scope}[scale=.7, yshift = -7cm]
						
						
						\draw(-4,2) node {(2)};
						
						\begin{knot}[
							clip width = 7pt,
							]
                            
                            \strand[thick,color=blue] (-3,0) .. controls +(.1,1) and +(-2,0) .. (0,1.25) .. controls +(2,0) and +(-.5,.5).. (3,0);
							\strand[thick,color=blue] (-3,0) .. controls +(1,.2) and +(-2,0) .. (0,.2) .. controls +(2,0) and + (-.5,.2) .. (2.25,1.5) .. controls +(.5,-.2) and +(-.5,1) .. (3,0) ;

						\end{knot}
						
						
						\fill[white](-.25,1.25) circle[radius = 5pt];
						\draw[thick,color=blue] (.25,1.4) .. controls +(0,.5) and +(0,.5) .. (-.25,1.25) .. controls + (0,-.25) and +(-.1,-.25) .. (.25,1.1);

						
						\begin{scope}[yscale=-1]
							
							\begin{knot}[
								clip width = 7pt,
								]
								
								\strand[thick] (-3,0) .. controls +(.1,1) and +(-2,0) .. (0,1.25) .. controls +(2,0) and +(-.5,.5).. (3,0);
								\strand[thick] (-3,0) .. controls +(1,.2) and +(-2,0) .. (0,.2) .. controls +(2,0) and + (-.5,.2) .. (2.25,1.5) .. controls +(.5,-.2) and +(-.5,1) .. (3,0) ;

							\end{knot}
							
							
							\fill[white](-.25,1.25) circle[radius = 5pt];
							\draw[thick] (.25,1.4) .. controls +(0,.5) and +(0,.5) .. (-.25,1.25) .. controls + (0,-.25) and +(-.1,-.25) .. (.25,1.1);

						\end{scope}
						
						\draw (-1.75,-.55) node[]{$\vdots$};
						\draw (-1.75,.85) node[color=blue]{$\vdots$};
						\draw (0,1.7) node[anchor=south,color=blue] {$0$};
						\draw (0,-1.7) node[anchor=north] {$0$};
						\draw (-2,1.1) node[anchor=south, color=blue] {$m$};
						\draw (-2,-1.1) node[anchor=north] {$-m$};
						\draw (-1.75,.75) node [anchor = west,scale=.8, color=blue] {($n$)};
						\draw (-1.75,-.75) node [anchor = west,scale=.8] {($n$)};
						
						
						\draw (4.5,1.5) node[scale=1.3] {$\cup\, h^3,\, h^4$};
						
						
						\fill[color=white] (-3,0) circle [radius = .5cm];
						\fill[color=white] (3,0) circle [radius = .5cm];
						
						\fill[color = pink] (-3.5,0) .. controls +(0,.2775) and +(-.27775,0) .. (-3,.5) .. controls +(.2775,0) and +(0,.2775) .. (-2.5,0);
						\fill[color = pink] (3.5,0) .. controls +(0,.2775) and +(.27775,0) .. (3,.5) .. controls +(-.2775,0) and +(0,.2775) .. (2.5,0);
						
						\draw[thick] (-3,0) circle [radius = .5cm];
						\draw[thick] (3,0) circle [radius = .5cm];
						
						\draw (6,0) node[scale=2] {$\Rightarrow$};
                        
						\begin{scope}[xshift= 12cm]
							
							\draw(-4,2) node {(3)};
							\begin{knot}[
								clip width = 7pt,
								flip crossing/.list={3,4}
								]

								\strand[thick,color=blue] (-3,0) .. controls +(.25,1) and +(-2,0) .. (0,1.25) .. controls +(2,0) and +(-.5,.5).. (3,0);
								\strand[thick,color=blue] (-3,0) .. controls +(1,.2) and +(-2,0) .. (0,.2) .. controls +(2,0) and + (-.5,-.2) .. (2.25,1.5) .. controls +(.5,.2) and +(-.25,1) .. (3,0) ;
								
								
								\strand[thick] (-.15,-.2) .. controls +(-.1,0) and +(-.1,0) .. (-.15, .1) .. controls +(2.5,0) and +(-.5,-.25) .. (2.2,1.25) .. controls + (.5,.25) and + (0, 0) .. (2.8,0);

								\strand[thick] (-.75,-.2) .. controls +(.1,0) and +(.1,0) .. (-.75,.1) .. controls +(-1,0) and + (-1,-.1) .. (-2.5,0);

								\strand[thick] (-2.73,0) .. controls +(0,1) and +(-1,0) .. (-.35, 1.05);
								
								\strand[thick] (-.05,1.05) ..controls +(1,0) and +(-1,1) .. (2.75,0);

							\end{knot}
							
							
							\fill[white](-.25,1.25) circle[radius = 5pt];
							\draw[thick,color=blue] (.25,1.4) .. controls +(0,.5) and +(0,.5) .. (-.25,1.25) .. controls + (0,-.25) and +(-.2,-.1) .. (0,.8);

							
							\begin{scope}[yscale=-1]
								
								\begin{knot}[
									clip width = 7pt,
									]

									\strand[thick] (-3,0) .. controls +(.1,1) and +(-2,0) .. (0,1.25) .. controls +(2,0) and +(-.5,.5).. (3,0);

									\strand[thick] (-3,0) .. controls +(1,.25) and +(-2,0) .. (-.75,.2);
									
									\strand[thick] (-.15,.2) .. controls +(2,0) and + (-.5,.2) .. (2.25,1.5) .. controls +(.5,-.2) and +(-.5,1) .. (3,0) ;

								\end{knot}
								
								
								\fill[white](-.25,1.25) circle[radius = 5pt];
								\draw[thick] (.25,1.4) .. controls +(0,.5) and +(0,.5) .. (-.25,1.25) .. controls + (0,-.25) and +(-.1,-.25) .. (.25,1.1);

							\end{scope}

							\draw (-1.75,-.55) node[]{$\vdots$};
							\draw (-.6,.75) node[color=blue]{$\vdots$};
							\draw (0,1.7) node[anchor=south,color=blue] {$0$};
							\draw (0,-1.7) node[anchor=north] {$0$};
							\draw (-2.5,.9) node[anchor=south, color=blue] {$m$};
							\draw (-2,-1.1) node[anchor=north] {$0$};
							
							
							\filldraw[fill=white,thick] (-1.6,.8) -- (-1,.8) -- (-1,1.4) -- (-1.6,1.4) -- (-1.6,.8);
							\draw (-1.3,1.1) node [scale=.7] {$m$};
							
							
							\draw (4.5,1.5) node[scale=1.3] {$\cup\, h^3,\, h^4$};
							
							
							\fill[color=white] (-3,0) circle [radius = .5cm];
							\fill[color=white] (3,0) circle [radius = .5cm];
							
							\fill[color = pink] (-3.5,0) .. controls +(0,.2775) and +(-.27775,0) .. (-3,.5) .. controls +(.2775,0) and +(0,.2775) .. (-2.5,0);
							\fill[color = pink] (3.5,0) .. controls +(0,.2775) and +(.27775,0) .. (3,.5) .. controls +(-.2775,0) and +(0,.2775) .. (2.5,0);
							
							\draw[thick] (-3,0) circle [radius = .5cm];
							\draw[thick] (3,0) circle [radius = .5cm];
							
							\draw (0,-3.5) node[scale=2] {$\Downarrow$};
						\end{scope}
						
						\begin{scope}[xshift = 12cm, yshift = -7cm]
							\draw(-4,2) node {(4)};
							\begin{knot}[
								clip width = 7pt,
								flip crossing/.list={3,4}
								]

								\strand[thick,color=blue] (-3,0) .. controls +(.1,1) and +(-2,0) .. (0,1.25) .. controls +(2,0) and +(-.5,.5).. (3,0);
								\strand[thick,color=blue] (-3,0) .. controls +(1,.2) and +(-2,0) .. (0,.2) .. controls +(2,0) and + (-.5,.2) .. (2.25,1.5) .. controls +(.5,-.2) and +(-.5,1) .. (3,0) ;
								
								
								\strand[thick]  (-.35, -1.25) -- (-1,-1.25) .. controls + (-1,0) and +(-2,0) .. (-1,1.05) -- (-.35, 1.05);
								
								\strand[thick] (-.05,1.05) ..controls +(1,0) and +(2,0) .. (-.05,-1.25);

							\end{knot}
							
							
							\fill[white](-.25,1.25) circle[radius = 5pt];
							\draw[thick,color=blue] (.25,1.4) .. controls +(0,.5) and +(0,.5) .. (-.25,1.25) .. controls + (0,-.25) and +(-.2,-.1) .. (0,.8);

							\fill[white](-.25,-1.25) circle[radius = 5pt];
							\draw[thick] (.25,-1.4) .. controls +(0,-.5) and +(0,-.5) .. (-.25,-1.25) .. controls + (0,.25) and +(-.1,.25) .. (.25,-1.1);


							\draw (-.6,.75) node[color=blue]{$\vdots$};
							\draw (0,1.7) node[anchor=south,color=blue] {$0$};
							\draw (0,-1.7) node[anchor=north] {$0$};
							\draw (-2.5,.9) node[anchor=south, color=blue] {$m$};
							\draw (-2,-.8) node[anchor=north] {$0$};
							
							
							\filldraw[fill=white,thick] (-1.6,.8) -- (-1,.8) -- (-1,1.4) -- (-1.6,1.4) -- (-1.6,.8);
							\draw (-1.3,1.1) node [scale=.7] {$m$};
							
							
							\draw (4.5,1.5) node[scale=1.3] {$\cup\, h^3,\, h^4$};
							
							
							\fill[color=white] (-3,0) circle [radius = .5cm];
							\fill[color=white] (3,0) circle [radius = .5cm];
							
							\fill[color = pink] (-3.5,0) .. controls +(0,.2775) and +(-.27775,0) .. (-3,.5) .. controls +(.2775,0) and +(0,.2775) .. (-2.5,0);
							\fill[color = pink] (3.5,0) .. controls +(0,.2775) and +(.27775,0) .. (3,.5) .. controls +(-.2775,0) and +(0,.2775) .. (2.5,0);
							
							\draw[thick] (-3,0) circle [radius = .5cm];
							\draw[thick] (3,0) circle [radius = .5cm];
							
							\draw (-6,0) node[scale=2]{$\Leftarrow$};
						\end{scope}
						
						\begin{scope}[yshift = -7cm]
							\draw(-4,2) node {(5)};
							\begin{knot}[
								clip width = 7pt,
								flip crossing/.list={3,8,6}
								]

								\strand[thick,color=blue] (-3,0) .. controls +(.1,1) and +(-2,0) .. (0,1.25) .. controls +(2,0) and +(-.5,.5).. (3,0);
								\strand[thick,color=blue] (-3,0) .. controls +(1,.2) and +(-2,0) .. (0,.2) .. controls +(2,0) and + (-.5,.2) .. (2.25,1.5) .. controls +(.5,-.2) and +(-.5,1) .. (3,0) ;
								
								\strand[thick] (-1,-1) .. controls +(-.5,.5) and +(-.5,-.5) .. (-1,1.5) .. controls +(.5,.5) and +(-.5,.5) .. (.5,1.5) .. controls +(.5,-.5) and +(.5,.5) .. (.5,-1) .. controls +(-.5,-.5) and + (.5,-.5) .. (-1,-1);
								
								\strand[thick,color=blue] (-.5,.6) .. controls +(-.2,0) and +(-.2,0) .. (-.5,2.4) ;

							\end{knot}
							
							
							\fill[color=white] (-.35,1.3) circle[radius = .1cm];
							\fill[color=white] (-.35,1.85) circle[radius = .1cm];
							\draw [thick,color=blue] (-.5,.6) .. controls +(.2,0) and +(.2,0) .. (-.5,2.4) ;
							
							\fill[color=white] (.4,1.6) circle[radius = .1cm];
							\draw[thick] (.1,1.65) .. controls +(-.25,-.25) and +(-.25,-.25) .. (.3,1.5) -- (.5,1.7) .. controls +(.25,.25) and +(.25,.25)..  (.3,1.85);


							\draw (-1.75,.85) node[color=blue]{$\vdots$};
							\draw (-.5,2.4) node[anchor=south,color=blue] {$0$};
							\draw (0,-1.5) node[anchor=north] {$2m+2$};
							\draw (-2.5,.9) node[anchor=south, color=blue] {$m$};
							\draw (.4,2) node[anchor=south] {$0$};
							
							
							\draw (4.5,1.5) node[scale=1.3] {$\cup\, h^3,\, h^4$};
							
							
							\fill[color=white] (-3,0) circle [radius = .5cm];
							\fill[color=white] (3,0) circle [radius = .5cm];
							
							\fill[color = pink] (-3.5,0) .. controls +(0,.2775) and +(-.27775,0) .. (-3,.5) .. controls +(.2775,0) and +(0,.2775) .. (-2.5,0);
							\fill[color = pink] (3.5,0) .. controls +(0,.2775) and +(.27775,0) .. (3,.5) .. controls +(-.2775,0) and +(0,.2775) .. (2.5,0);
							
							\draw[thick] (-3,0) circle [radius = .5cm];
							\draw[thick] (3,0) circle [radius = .5cm];
							
							\draw (0,-3.5) node[scale=2, rotate = -70] {$\Rightarrow$};
						\end{scope}
						
						\begin{scope} [xshift = 8cm, yshift = -13cm]
							\draw(-9,0) node {(6)};
							\begin{knot}[
								clip width = 7pt,
								flip crossing/.list={3,8,6,4}
								]

								\strand[thick,color=blue] (-3,0) .. controls +(.1,1) and +(-2,0) .. (0,1.25) .. controls +(2,0) and +(-.5,.5).. (3,0);
								\strand[thick,color=blue] (-3,0) .. controls +(1,.2) and +(-2,0) .. (0,.2) .. controls +(2,0) and + (-.5,.2) .. (2.25,1.5) .. controls +(.5,-.2) and +(-.5,1) .. (3,0) ;
								
								\strand[thick,color=blue] (-.5,.6) .. controls +(-.5,0) and +(-.5,0) .. (-.5,2) .. controls +(.5,0) and +(.5,0) .. (-.5,.6) ;
								
								\strand[thick] (-5.5,0) circle[radius=1cm];
								\strand[thick] (-6.75,0) circle[radius=1cm];

							\end{knot}
							


							\draw (-1.75,.85) node[color=blue]{$\vdots$};
							\draw (-1.75,.75) node [anchor = west,scale=.8, color=blue] {($n$)};
							\draw (-.5,2) node[anchor=south,color=blue] {$0$};
							\draw (-2.5,.9) node[anchor=south, color=blue] {$m$};
							\draw (-5,-1.1) node[anchor=north,scale=.8] {$2m+2$};
							\draw (-7,1) node [anchor = south] {$0$};
							
							
							\draw (4.5,1.5) node[scale=1.3] {$\cup\, h^3,\, h^4$};
							
							
							\fill[color=white] (-3,0) circle [radius = .5cm];
							\fill[color=white] (3,0) circle [radius = .5cm];
							
							\fill[color = pink] (-3.5,0) .. controls +(0,.2775) and +(-.27775,0) .. (-3,.5) .. controls +(.2775,0) and +(0,.2775) .. (-2.5,0);
							\fill[color = pink] (3.5,0) .. controls +(0,.2775) and +(.27775,0) .. (3,.5) .. controls +(-.2775,0) and +(0,.2775) .. (2.5,0);
							
							\draw[thick] (-3,0) circle [radius = .5cm];
							\draw[thick] (3,0) circle [radius = .5cm];
						\end{scope}
						
					\end{scope}
					
				\end{tikzpicture}
				
			}

			\caption{In (1), all $n$ strands run parallel to each other except the stands with $m$--framing. In (3) and (4), the boxes labeled $m$ indicate a linking of $m$ between the two strands going through the box.}
			\label{fig:fold_dble_cover}
		\end{figure}
	\end{proof}

 	\subsection{Nonorientable genus two SBLFs with Lefschetz singularities.}
    
	Recall from Remark \ref{rk:relminimal} that any nonorientable SBLF is related to a relatively minimal SBLF (i.e. one where no Lefschetz vanishing cycle bounds a disk or a Möbius band) through a sequence of orientable and nonorientable blow-downs. This information can be used to classify nonorientable genus two SBLFs with indefinite fold and Lefschetz critical points. Such SBLFs are obtained by first adding a round handle cobordism $X_0$ to a nonorientable genus two Lefschetz fibration $f_+\colon X_+\to D^2$ which we assume is relatively minimal. Relatively minimal genus two Lefschetz fibrations, as already discussed in \cite[Section 4]{Miller-Ozbagci}, have a rather simple structure, since the only Dehn twist generator of $\Mod(Kb)$ is $t_\gamma$, where $\gamma$ is the unique two-sided non-separating curve on $Kb$, up to isotopy. So, the monodromy factorization of $f_+$ is $t_\gamma^k$ for some $k>0$, where each $t_\gamma$ factor can be taken to be either right-- or left--handed. Moreover, the attaching circle of the $2$--handle of the round $2$--handle coming from the indefinite fold would also be isotopic to $\gamma$. Therefore, the $2$--handles of the Lefschetz singularities can be slid off the $1$--handle(s) using this zero framed $2$--handle. After this handle slide, these unknotted $2$--handles with framing $\pm 1$ can be regarded as nullhomotopic vanishing cycles (so the total space is diffeomorphic to a Lefschetz fibration that is not relatively minimal), and can be blown down to get a nonorientable genus two SBLF with only an indefinite fold singularity. (Note that we can blow down these no matter how the lower-genus side $X_-$ is attached to $X_+ \cup X_0$.) As these were classified in the previous section, we conclude the following.
	
	\begin{proposition}
		Let $f\colon X\to S^2$ be a nonorientable genus two SBLF with $Z_f \neq \emptyset$ and $C_f \neq \emptyset$. Then $X$ can be obtained from one of the manifolds listed in Proposition~\ref{prop:fold_only_g2} through a sequence of orientable and nonorientable blow-ups.
	\end{proposition}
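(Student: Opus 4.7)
The plan is to reduce $f$ to an SBLF without Lefschetz singularities, and then invoke Proposition~\ref{prop:fold_only_g2}. Using Remark~\ref{rk:relminimal}, I first pass to a relatively minimal SBLF $f'\colon X'\to S^2$ obtained from $f$ by blowing down (orientably or nonorientably) all isolated Lefschetz fibers that contain $S^2$ or $\RP$; it then suffices to show that $X'$ lies in the list of Proposition~\ref{prop:fold_only_g2}, since $X$ is recovered by reversing these blow-downs. The key structural input is that $Kb$ carries a unique essential two-sided simple closed curve $\gamma$ up to isotopy, so the only Dehn twist appearing in the monodromy of a relatively minimal genus two Lefschetz piece must be $t_\gamma^{\pm 1}$; by the connectedness of the lower-genus fiber, the indefinite fold vanishing cycle is also isotopic to $\gamma$.

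Decompose $X' = X_+ \cup X_0 \cup X_-$ as in Section~\ref{sec:SBLF_topology}. Then $X_+\to D^2_+$ is a genus two Lefschetz fibration whose Kirby diagram is obtained from $Kb\times D^2$ by attaching $\pm 1$--framed $2$--handles $h_1,\ldots,h_n$ along parallel copies of $\gamma$, one per Lefschetz critical point, while $X_0$ contributes a $0$--framed $2$--handle $h_0$ along $\gamma$ as well. Since the attaching circles of $h_0$ and of each $h_i$ are isotopic on the reference fiber $Kb$, each $h_i$ can be slid over $h_0$ so that its new attaching circle is pushed off the $1$--handles of $Kb\times D^2$ and becomes a small unknot of framing $\pm 1$, disjoint from everything else in the diagram. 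The slide is supported in a collar of $\partial X_+$ and is therefore unaffected by the attaching map of $X_-$.

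After these slides, each new Lefschetz vanishing cycle is the band sum of two parallel copies of $\gamma$ on $Kb$, which bounds either a disk or a M\"obius band on the fiber. The corresponding Lefschetz fiber therefore contains an embedded $S^2$ of self-intersection $\pm 1$ or an embedded $\RP$ with regular neighborhood $\RP\twist D^2$, and blowing these down one by one (orientably or nonorientably) via Remark~\ref{rk:relminimal} yields an SBLF $f''\colon X''\to S^2$ with $C_{f''}=\emptyset$, $Z_{f''}\neq\emptyset$, and higher-genus fiber $Kb$. Proposition~\ref{prop:fold_only_g2} then identifies $X''$ as one of $N_n$, $N'_n$, $S^1\twist S^3 \# S^2 \times S^2$, or $S^1\twist S^3 \# 2\CP$, and reversing the blow-downs recovers $X$. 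The main subtlety I anticipate is checking that the slide genuinely produces inessential vanishing cycles and does not disturb the fiberwise structure of $X_+\cup X_0$ along its boundary with $X_-$; this follows from classifying the two possible band sums of parallel copies of $\gamma$ on $Kb$ together with the locality of the handle slide.
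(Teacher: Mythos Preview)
Your argument is correct and follows essentially the same route as the paper: both exploit that the unique essential two-sided curve on $Kb$ forces every relatively minimal Lefschetz vanishing cycle and the fold vanishing cycle to coincide with $\gamma$, then slide the Lefschetz $2$--handles over the $0$--framed fold $2$--handle to render them inessential and blow down. One small expository slip: you write ``it then suffices to show that $X'$ lies in the list,'' but what you actually prove is that $X'$ is obtained from some $X''$ in the list by further blow-ups---this still yields the conclusion since blow-ups compose, but the sentence as written overstates what you establish.
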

	
	Next we consider the case where $f\colon X\to S^2$ is a nonorientable genus two SBLF with $Z_f = \emptyset$, namely a Lefschetz fibration with $Kb$ regular fibers. Most of this case has already been flushed out in \cite{Miller-Ozbagci}, and we provide their Kirby diagrams below.  As before, we assume that $f$ is relatively minimal. The only positive factorizations of the identity in $\Mod(Kb)$ given by Dehn twists are $(t_\gamma)^{2m}=1$. Let $N_m$ be the genus two Lefschetz fibration over a disk whose positive factorization is $(t_\gamma)^{2m}$. We use the notation of \cite{Miller-Ozbagci}, and let $X(m)$ denote the Lefschetz fibration obtained by gluing a copy of $D^2\times Kb$ to $N_m$ via the identity map, shown on the left in Figure \ref{fig:Xm}. Indeed, $X(m)$ is the fiber sum of $m$ copies of $X(1)$. More generally, any relatively minimal genus two Lefschetz fibration is obtained from $X(m)$ by removing a regular fiber and re-gluing it via a fiber-preserving diffeomorphism of $S^1\times Kb$. These diffeomorphisms have been classified in Lemma \ref{lem:diff_KB_gen}, and this determines all relatively minimal nonorientable genus two Lefschetz fibrations up to diffeomorphism. We summarize these findings as follows:

	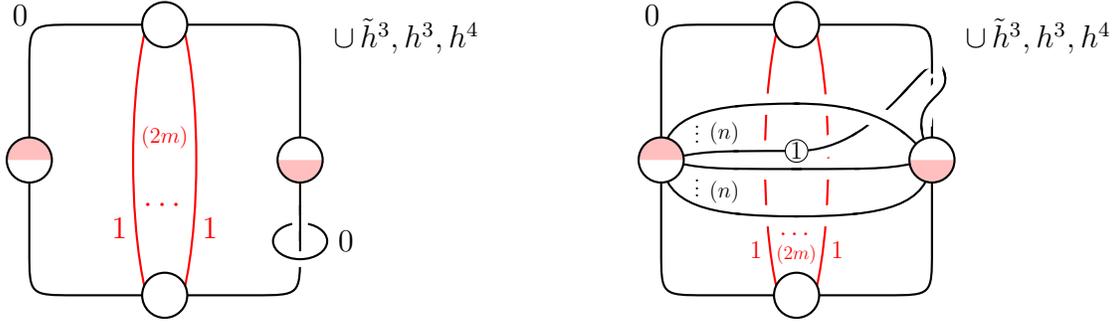
\begin{figure}
		\centering

		\begin{tikzpicture}
	\begin{scope}[scale=.6]
		
		\begin{knot}[
			clip width = 10pt,
			flip crossing/.list={2}
			]
			\strand[thick] (.5,3) -- (2,3) .. controls +(1,0) and +(0,1) .. (3,2) -- (3,.5);
			\strand[thick, rotate = 90] (.5,3) -- (2,3) .. controls +(1,0) and +(0,1) .. (3,2) -- (3,.5);
			\strand[thick, rotate = 180](.5,3) -- (2,3) .. controls +(1,0) and +(0,1) .. (3,2) -- (3,.5);
			\strand[thick](3,-.5) -- (3,-2) .. controls +(0,-1) and +(1,0) .. (2,-3) -- (.5,-3);
			
			\strand[thick,color = red] (-.4,3) .. controls +(0,0) and +(0,2) .. (-.7,0) .. controls +(0,-2) and +(0,0) .. (-.4,-3);
			\strand[thick,color = red] (.4,3) .. controls +(0,0) and +(0,2) .. (.7,0) .. controls +(0,-2) and +(0,0) .. (.4,-3);

		\end{knot}
		
		\fill[color=white] (3,-2.2) circle [radius=5pt];
		
		\draw[thick] (3,-1.8) ellipse (.6cm and .4cm);
		
		\fill[color=white] (3,-1.4) circle [radius=5pt];
		
		\draw[thick] (3,-1.8) -- (3,-1);
		
		
		\fill[color = pink] (-3.5,0) .. controls +(0,.2775) and +(-.27775,0) .. (-3,.5) .. controls +(.2775,0) and +(0,.2775) .. (-2.5,0);
		\fill[color = pink] (3.5,0) .. controls +(0,-.2775) and +(.27775,0) .. (3,-.5) .. controls +(-.2775,0) and +(0,-.2775) .. (2.5,0);
		
		\draw[thick] (-3,0) circle [radius = .5cm];
		\draw[thick] (3,0) circle [radius = .5cm];
		\filldraw[thick, fill=white] (0,3) circle [radius = .5cm];
		\filldraw[thick, fill=white] (0,-3) circle [radius = .5cm];
		
		\draw (-1,-1.5) node[color=red]{$1$};
		\draw (1, -1.5) node[color=red]{$1$};
		\draw (0,.5) node[color=red,scale=.7] {$(2m)$};
		\draw (0,-1) node[color = red] {$\cdots$};

		\draw (3.5,2.75) node[anchor=west] {$\cup\, \tilde h^3,h^3,h^4$};
		\draw (-3.2,3.2) node{$0$};
		\draw (3.6,-1.8) node[anchor=west] {$0$};
		
		\begin{scope}[xshift = 14cm]
			\begin{knot}[
				clip width = 10pt,
				flip crossing/.list={2,3,4,5,6,7,8,9,10}
				]
				\strand[thick] (.5,3) -- (2,3) .. controls +(1,0) and +(0,1) .. (3,2) -- (3,.5);
				\strand[thick, rotate = 90] (.5,3) -- (2,3) .. controls +(1,0) and +(0,1) .. (3,2) -- (3,.5);
				\strand[thick, rotate = 180](.5,3) -- (2,3) .. controls +(1,0) and +(0,1) .. (3,2) -- (3,.5);
				\strand[thick](3,-.5) -- (3,-2) .. controls +(0,-1) and +(1,0) .. (2,-3) -- (.5,-3);
				
				\strand[thick,color = red] (-.4,3) .. controls +(0,0) and +(0,2) .. (-.7,0) .. controls +(0,-2) and +(0,0) .. (-.4,-3);
				\strand[thick,color = red] (.4,3) .. controls +(0,0) and +(0,2) .. (.7,0) .. controls +(0,-2) and +(0,0) .. (.4,-3);
				
				
				\strand[thick] (-3,0) .. controls +(.1,1) and +(-2,0) .. (0,1.25) .. controls +(2,0) and +(-.5,.5).. (3,0);
				\strand[thick] (-3,0) .. controls +(1,.2) and +(-2,0) .. (0,.2) .. controls +(2,0) and + (-.3,.75) .. (3.25,2) .. controls +(.3,-.75) and +(-.75,1.5) .. (3,0) ;
				
				\strand[thick] (-3,0) .. controls +(.1,-1) and +(-2,0) .. (0,-1.25) .. controls +(2,0) and +(-.1,-1).. (3,0);
				\strand[thick] (-3,0) .. controls +(1,-.2) and +(-2,0) .. (0,-.2) .. controls +(2,0) and +(-1,-.2).. (3,0);
				
			\end{knot}
			
			
			\fill[color=white] (-3,0) circle [radius=.5cm];
			\fill[color=white] (3,0) circle [radius=.5cm];
			
			\fill[color = pink] (-3.5,0) .. controls +(0,.2775) and +(-.27775,0) .. (-3,.5) .. controls +(.2775,0) and +(0,.2775) .. (-2.5,0);
			\fill[color = pink] (3.5,0) .. controls +(0,-.2775) and +(.27775,0) .. (3,-.5) .. controls +(-.2775,0) and +(0,-.2775) .. (2.5,0);
			
			\draw[thick] (-3,0) circle [radius = .5cm];
			\draw[thick] (3,0) circle [radius = .5cm];
			\filldraw[thick, fill=white] (0,3) circle [radius = .5cm];
			\filldraw[thick, fill=white] (0,-3) circle [radius = .5cm];
			
			
			\draw (-.9,-2) node[color=red,scale=.8]{$1$};
			\draw (.9, -2) node[color=red,scale=.8]{$1$};
			\draw (0,-2.1) node[color=red,scale=.6] {$(2m)$};
			\draw (0,-1.65) node[color = red,scale=.8] {$\cdots$};
			
			\draw (3.5,2.75) node[anchor=west] {$\cup\, \tilde h^3,h^3,h^4$};
			\draw (-3.2,3.2) node{$0$};
			\filldraw[fill=white] (0,.2) circle [radius = .25cm] (0,.2) node[scale=.8] {$1$};
			
			\draw (-2.2,.7) node [scale = .7] {$\vdots$};
			\draw (-2.2,-.5) node [scale = .7] {$\vdots$};
			\draw (-1.6, .6) node [scale=.7] {$(n)$};
			\draw (-1.6, -.7) node [scale=.7] {$(n)$};
		\end{scope}
		
	\end{scope}

\end{tikzpicture}

		\caption{Left: The Kirby diagram for the relatively minimal nonorientable genus two Lefschetz fibration $X(m)$ with a section. Right: The  diagram for the more general Lefschetz fibration $M_{m,n}$.}
		\label{fig:Xm}
	\end{figure}
	
	\begin{proposition}\label{prop:KB_Lefschetz_sing}
		Let $M_{m,n}$ denote the genus two Lefschetz fibration \[
		M_{m,n} \cong (X(m) \setminus \nu(F)) \cup_{\phi^n} (D^2\times Kb),
		\] shown on the right in Figure \ref{fig:Xm}. The total space of any nonorientable genus two Lefschetz fibration is related to exactly one copy of $M_{m,n}$ through a sequence of orientable and nonorientable blow-downs.
	\end{proposition}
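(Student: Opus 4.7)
The plan is to follow the strategy sketched in the paragraphs preceding the proposition: reduce to relatively minimal fibrations via Remark~\ref{rk:relminimal}, use the near-triviality of $\Mod(Kb)$ to pin down the Lefschetz fibration over a disk containing all critical values, and then use Lemma~\ref{lem:diff_KB_gen} to classify the possible closing-up gluings.

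First I would invoke Remark~\ref{rk:relminimal} to reduce to the case of a relatively minimal nonorientable genus two Lefschetz fibration $f\colon X\to S^2$, since any such fibration is obtained from a relatively minimal one by an iterated sequence of orientable and nonorientable blow-ups. I would then let $D^2_+\subset S^2$ be a disk containing the images of all Lefschetz critical points, set $X_+ = f^{-1}(D^2_+)$, and write $X = X_+ \cup_\psi X_-$ with $X_-\cong D^2\times Kb$ and $\psi$ a fiber-preserving self-diffeomorphism of $Kb\times S^1$. Relative minimality forces each vanishing cycle to be an essential two-sided simple closed curve on $Kb$; since the unique such curve up to isotopy is $\gamma$, the monodromy factorization of $X_+$ has the form $t_\gamma^k$. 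As recorded just before the proposition, $t_\gamma^k$ is trivial in $\Mod(Kb)$ precisely when $k=2m$ is even, and this triviality is exactly the condition that the boundary $Kb$-bundle over $\partial D^2_+$ be trivial and hence cap-off-able by $X_-$. This identifies $X_+$ with $N_m$.

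Next I would classify the gluing $\psi$. By Lemma~\ref{lem:diff_KB_gen}, fiber-preserving self-diffeomorphisms of $Kb\times S^1$ covering the identity on $S^1$ are classified up to isotopy by $\pi_1(\Diff(Kb))\cong \Z\langle\phi\rangle$, so $\psi$ is isotopic to $\phi^n$ for some $n\in\Z$, giving $X\cong M_{m,n}$ by the very definition of $M_{m,n}$. Mirroring the argument for Klein bottle bundles in Subsection~\ref{sec:KB_bundles}, $\phi^{-1}$ is isotopic to $\phi$ via precomposition with an antipodal involution on the $S^1$ factor, so $M_{m,n}\cong M_{m,-n}$ and we may restrict to $n\geq 0$. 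For uniqueness of the pair $(m,n)$ in this range, I would read off $\pi_1(M_{m,n})$ from the Kirby diagram on the right of Figure~\ref{fig:Xm} via the same Reidemeister--Schreier-style calculation used in the proof of Proposition~\ref{prop:KB_bundle_handlebody}, showing that distinct pairs produce non-isomorphic fundamental groups (and in particular non-diffeomorphic total spaces).

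The main obstacle is this final uniqueness claim: $m$ is visibly an invariant of the fibration (it is half the number of Lefschetz critical points), but showing that distinct $n\geq 0$ yield distinct total spaces for fixed $m$ requires an explicit invariant computation on a Kirby diagram carrying both a twisted $1$--handle and $m$ extra $2$--handles crossing the untwisted $1$--handle, so the bookkeeping is where the real work lies. By contrast, verifying that the triviality of the boundary bundle of $X_+$ corresponds exactly to $t_\gamma^{2m}=1$, and that $\phi^n$ exhausts the possible gluings, is essentially formal given the setup established in Subsections~\ref{sec:MCGcharacterization} and~\ref{sec:KB_bundles}.
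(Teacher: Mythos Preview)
Your approach is essentially the paper's own: reduce to relatively minimal via Remark~\ref{rk:relminimal}, identify every vanishing cycle with $\gamma$, conclude the monodromy factorization is $t_\gamma^{2m}$, and classify the closing-up gluing via $\pi_1(\Diff(Kb))=\Z\langle\phi\rangle$ from Lemma~\ref{lem:diff_KB_gen}. The paper records exactly this argument in the paragraph preceding the proposition.

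There is one genuine gap in your uniqueness step. You assert that the computation of $\pi_1(M_{m,n})$ will show that distinct pairs $(m,n)$ produce non-isomorphic fundamental groups, but it will not: each Lefschetz $2$--handle is attached along $\gamma$ (once over the untwisted $1$--handle), so it kills $b$, and the section-type $2$--handle contributes $a^{2n}=1$; hence $\pi_1(M_{m,n})\cong\Z_{2n}$ independently of $m\geq 1$. Your earlier remark that $m$ is half the number of Lefschetz critical points is the right instinct, but you phrase it as a fibration invariant, whereas the proposition is about total spaces. The fix is immediate and is exactly what the paper does in the sentence following the proposition: pair $\pi_1(M_{m,n})\cong\Z_{2n}$ with the Euler characteristic $\chi(M_{m,n})=\chi(S^2)\chi(Kb)+2m=2m$ to recover both $n$ and $m$ from the diffeomorphism type. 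No Reidemeister--Schreier is needed; the presentation can be read straight off the diagram.
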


    Here $M_{m,n}$ has $\pi_1=\Z_{2n}$ and Euler characteristic $m$. A Kirby diagram for $M_{m,n}$ is given by taking the Kirby diagram for $X(m)$, and replacing the section $2$--handle with a $2$--handle which is attached along the curve corresponding to $\phi^n$ from Lemma \ref{lem:diff_KB_gen}. Theorem \ref{thm:g2_classification} now follows from Propositions \ref{prop:KB_bundle_handlebody}, \ref{prop:fold_only_g2}, and \ref{prop:KB_Lefschetz_sing}.

\begin{remark} \label{rk:genus3}
        Using the conversion formulae in Remark \ref{rk:trisection_to}, we conclude that the algorithm given in the proof of Theorem \ref{thm:STexistence} produces $(4,2)$ simplified trisections for each $K_n$ and $(3,1)$ simplified trisections for each $N_n$ and $N'_n$. 
        By Lemma \ref{lem:genus_bound}, these are all of minimal genus. The latter examples extend the conjectural list of genus--$3$ trisections with nonorientable examples;~see \cite{meier, Baykur-Saeki:PNAS}. Akin to how the orientable examples $L_n$ and $L'_n$ are obtained from $S^1 \x S^3$ by surgering a curve representing $n$ times the generator of $\pi_1(S^1 \x S^3)$ \cite{Pao, meier}, the nonorientable $4$--manifolds $N_n$ and $N'_n$ can be obtained by surgering a curve representing $2n$ times the generator of $\pi_1(S^1 \twprod S^3) \cong \Z$.
\end{remark}

\section{A surgery classification for nonorientable $4$--manifolds}

In this final section, we bring torus surgeries to the study of nonorientable $4$--manifolds. We recall the definition first.
Let $T$ be a torus embedded in a $4$--manifold $X$ with a trivial normal bundle, so it has a regular neighborhood $\nu T$ diffeomorphic to $T^2 \x D^2$ via some framing $\tau$. A \emph{torus surgery} along $T$ constructs a new $4$-manifold $X' = (X \setminus \nu T) \, \cup_{\phi} T^2 \x D^2$ using a gluing diffeomorphism $\phi \colon \partial (T^2 \x D^2) \to \partial (X \setminus \nu T)$. 

The gluing data $\phi$ can be discretized for a more effective study. First, note that $X'$ is built from $X \setminus \nu T$ by attaching a $2$--handle, two $3$--handles, and a $4$--handle---that is, the handle decomposition of $T^2 \x D^2$ turned upside down. Since 
attaching of $3$-- and $4$--handles result in diffeomorphic $4$--manifolds \cite{laudenbach-poenaru, Miller-Naylor}, $X'$ is determined by the attaching circle of the $2$--handle, namely, $C:=\phi(\{pt \} \x \partial D^2)$. The latter is further determined by the homology class $[C] = p [\mu] + q [\lambda_\tau]$ in $ H_1(\partial \nu T)$, where $\mu$ is a meridian of the torus $T$ and $\lambda_\tau$ is a push-off of a primitive curve $\lambda \subset T$ under $\tau$. Thus, the torus surgery can be encoded as $X' = X(T, \tau, \lambda, p/q)$, where $\lambda$ is called the \emph{surgery curve} and $p/q \in \Q \cup \{ \infty \}$ the \emph{surgery coefficient}. We call it an \emph{integral} torus surgery when $q = \pm 1$ \cite{Baykur_Sunukjian_2012}.

It is a theorem of Iwase that every closed, oriented $4$--manifold can be obtained by a torus surgery along a link of tori in a connected sum of copies of $\CP$, $\CPb$ and $S^1 \x S^3$ \cite{Iwase}. In preparation to prove a version of this result for nonorientable $4$--manifolds, namely Theorem~\ref{thm:surgery}, we first note a simple observation exploited in \cite{Iwase, Baykur_Sunukjian_2012}, a proof of which can be found in \cite{LARSON_2016}. Let $u$ be the unknot in $D^3$. Decomposing $D^4$ as $S^1\times D^3\cup_{S^1\times S^2} (S^2\times D^2 \setminus D^4)$, the unknotted torus $T$ in $D^4$ can be regarded as $S^1\times u\subset S^1\times D^3\subset D^4$. Let $\tau:\nu T\to S^1\times \nu(u)$ denote this identification.

\begin{lemma} \label{lem:Larson}
Let $T \subset D^4$ be an unknotted torus with framing $\tau$ as described above. Let $\lambda_i \subset T$ be a $(1,i)$--curve, for $i=0,1$. Then, 
\[
D^4(T, \tau, \lambda_i, 0) \cong \begin{cases}
	(S^2\times S^2 \# S^1\times S^3) \setminus \text{Int}(D^4) & \text{for } i=0\\
	(\mathbb{C}\mathbb{P}^2\#\overline{\mathbb{C}\mathbb{P}^2} \# S^1\times S^3)\setminus \text{Int}(D^4) & \text{for } i=1.\\
\end{cases}
\]
\end{lemma}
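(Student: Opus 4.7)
The argument is a direct Kirby-calculus computation on the complement $E := D^4\setminus\nu T$. The decomposition $D^4 = S^1\times D^3 \,\cup\, (S^2\times D^2\setminus\text{Int}(D^4))$ together with $T = S^1\times u$ concentrates $T$ inside the first piece: $S^1\times D^3\setminus\nu T = S^1\times(D^3\setminus\nu u)$, where $D^3\setminus\nu u$ is the exterior of the unknot $u\subset D^3$. Gluing this to $D^2\times S^2\setminus\text{Int}(D^4)$ along $S^1\times S^2$ yields an explicit handle diagram for $E$ (cf.\ \cite{LARSON_2016}), and on the torus $T = \partial\nu T$ one identifies the surgery curves directly: $\lambda_0 = S^1\times\{pt\}$ runs once through the $1$-handle arising from the $S^1$-factor of $S^1\times D^3$, $\lambda_0^\perp = \{pt\}\times u$ is parallel to the attaching circle of the natural $2$-handle coming from the meridian of $u$, and $\lambda_1 = \lambda_0 + \lambda_0^\perp$.

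Performing the $0$-surgery then amounts to attaching, to this diagram, a single $2$-handle $h$ along a $\tau$-push-off of $\lambda_i$, together with two $3$-handles arising from the loops of the re-glued $T^2$; the $4$-handle of the new $T^2\times D^2$ is suppressed since the result $D^4(T,\tau,\lambda_i,0)$ still has boundary $S^3$. Because $\tau$ is the product framing $\nu T\cong S^1\times\nu(u)$, the attaching circle of $h$ carries framing $0$ in the ambient diagram when $i=0$ and framing $\pm 1$ when $i=1$, with the twist produced by the single crossing of $\lambda_1$ through the normal disk to $u$. In both cases the attaching circle is isotopic to a small meridian of the $1$-handle of $E$ coming from the $S^1$-factor.

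Routine Kirby moves now identify the total space. For $i=0$, the $0$-framed meridian $h$ together with one of the new $3$-handles cancels an existing $2$-handle/$1$-handle pair of $E$, and after further cancellations the diagram splits as a standalone $1$-handle (with companion $3$-handle, producing an $S^1\times S^3$-summand with a ball removed) disjoint from a $0$-framed unknotted $2$-handle (producing an $S^2\times S^2$-summand with a ball removed); this gives $(S^2\times S^2\#S^1\times S^3)\setminus\text{Int}(D^4)$. For $i=1$ the same sequence of moves applies verbatim, except that the surviving split $2$-handle now has framing $\pm 1$ and so contributes a $\CP\#\CPb$-summand in place of $S^2\times S^2$, yielding $(\CP\#\CPb\#S^1\times S^3)\setminus\text{Int}(D^4)$. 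The main obstacle is the opening step: drawing the handle diagram for $E$ with $\lambda_0$ and $\lambda_1$ visible, and verifying that the product framing $\tau$ translates into the claimed $0$- and $\pm 1$-framings in the diagram. Once this identification is in hand, the subsequent cancellations are a routine exercise in Kirby calculus.
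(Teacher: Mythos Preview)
The paper does not actually prove this lemma; it records the statement and defers to \cite{LARSON_2016} for the argument. Your Kirby-calculus approach via the exterior $E=D^4\setminus\nu T$ is precisely the standard one, so in outline you are doing what that reference does and there is nothing to compare against in the paper itself.

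That said, your execution has a real gap and at least one incorrect detail. The endgame claim that ``the diagram splits as a standalone $1$-handle \dots\ disjoint from a $0$-framed unknotted $2$-handle (producing an $S^2\times S^2$-summand with a ball removed)'' cannot be right: a single $0$-framed unknot yields $S^2\times D^2$, not $S^2\times S^2\setminus\text{Int}(D^4)$. What should remain after the cancellations is a $0$-framed \emph{Hopf link}. The cleanest way to see this is to draw $D^4$ with $\nu T$ visible as a Borromean triple $A,B,C$ with $A,B$ dotted and $C$ $0$-framed, together with $0$-framed meridians $m_A,m_B$ and a single $3$-handle (these extra handles cancel back to $D^4$). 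Here $\lambda_0$ is the meridian of $A$ and the torus meridian $\mu$ is the meridian of $C$, so the $(T,\tau,\lambda_0,0)$-surgery simply swaps the dot on $A$ with the $0$-framing on $C$ (this is exactly the ``permutation'' in the paper's Figure~\ref{fig:t2_surg}). Cancelling $B$ against $m_B$ unlinks the Borromean triple, leaving the $0$-framed Hopf link $A\cup m_A$ (the $S^2\times S^2$-summand) disjoint from the dotted $C$ and the $3$-handle (the $S^1\times S^3$-summand). For $i=1$ the surgery curve links both $A$ and $B$, so it is not ``isotopic to a small meridian of the $1$-handle coming from the $S^1$-factor'' as you write; after an appropriate slide the same swap occurs but now with a $\pm1$-framing on one Hopf component, turning $S^2\times S^2$ into $\CP\#\CPb$. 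You correctly flag this framing computation as ``the main obstacle'', but you then assert the answer rather than verify it --- that verification is the actual content of the lemma and needs to be carried out.
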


\begin{proof}[Proof of Theorem~\ref{thm:surgery}]
Let $\{\tilde \gamma, \gamma_1,\cdots \gamma_n\}$ be a disjoint collection of curves in the closed, connected, nonorientable $4$--manifold $X$, isotopic (individually) to a geometric basis for $\pi_1(X)$, where $\tilde{\gamma}$ is orientation-reversing and each $\gamma_i$ is orientation-preserving. Let $T_i:=\gamma_i \x u_i$ be an unknotted torus in $\nu(\gamma_i)\cong_{\tau_i} S^1\times D^3$ (with framing $\tau_i$), where $u_i$ corresponds to an unknot in $D^3$, for each $i=1,2, \ldots, n$. Thus $L:= \sqcup_{i=1}^n T_i$ is a link of tori in $X$ with trivial normal bundles. For $\mu_i$ a meridian of $T_i$, consider the geometric basis $\{\gamma_i, u_i, \mu_i\}$ for $\pi_1(\partial(\nu(T_i)))$.

As shown in Figure \ref{fig:t2_surg}, for $(T_i, \tau_i, \gamma_i)$ as chosen above, the surgered manifold $\nu(\gamma_i) (T_i, \tau_i, \gamma_i, 0)$ is diffeomorphic to $S^2\times D^2\#S^1\times S^3$; cf. \cite[Theorem A.2]{Iwase}. In the decomposition $X=(X\setminus \nu(\gamma_i))\cup_{S^1\times S^2} S^1\times D^3$, the inclusion $S^1\times S^2 \hookrightarrow S^1\times D^3$ induces an isomorphism on $\pi_1$ before the torus surgery, whereas its inclusion to the new component in the decomposition $X' =(X\setminus \nu(\gamma_i))\cup_{S^1\times S^2} (S^2\times D^2\#S^1\times S^3)$ is trivial. It follows from Seifert--Van Kampen that after the torus surgery, $\gamma_i$ is nullhomotopic and there is a new $\pi_1$ generator coming from the  new $S^1\times S^3$ summand.
\begin{figure}[ht!]
	\centering
	\resizebox{!}{1.5in}{
		\begin{tikzpicture}
	
	\begin{knot}[
		clip width = 7pt,
		flip crossing/.list={3,4,8,10}, 
		]
		\strand[ultra thick] (1,0) circle[radius=1.5cm] (2.8,-.5) node[scale=1.2] {$0$};
		\strand[ultra thick] (-1,0) circle[radius=1.5cm];
		\strand[ultra thick] (0,{sqrt(3)}) circle[radius=1.5cm];
		\strand[ultra thick] (0, {sqrt(3)+2}) .. controls +(.4,0) and +(.4,0) .. (0,{sqrt(3)+1}) .. controls +(-.4,0) and +(-.4,0) .. (0, {sqrt(3)+2}) (0,4.1) node[scale=1.2] {$0$};
		
		\strand[thick, color=purple, dashed] (-1,-1.5) circle [radius = .5cm];
		
	\end{knot}
	
	\fill (-2.04,-1.05) circle [radius = 4pt];
	\fill (-1.35,2.4) circle [radius = 4pt];
	\draw (2.5,3) node[scale=1.5] {$\cup h^3$};
	
	\draw [-{Stealth[length=5mm, width=4mm]}, ultra thick] (3.2,1) -- (5,1);
	
	\begin{scope}[xshift = 8cm]
		
		\begin{knot}[
			clip width = 7pt,
			flip crossing/.list={3,4,8,10}, 
			]
			\strand[ultra thick] (1,0) circle[radius=1.5cm];
			\strand[ultra thick] (-1,0) circle[radius=1.5cm] (-2.7,-.75) node[scale=1.2] {$0$};
			\strand[ultra thick] (0,{sqrt(3)}) circle[radius=1.5cm];
			\strand[ultra thick] (0, {sqrt(3)+2}) .. controls +(.4,0) and +(.4,0) .. (0,{sqrt(3)+1}) .. controls +(-.4,0) and +(-.4,0) .. (0, {sqrt(3)+2}) (0,4.1) node[scale=1.2] {$0$};
			
			\strand[thick, color=purple, dashed] (-1,-1.5) circle [radius = .5cm];
			
		\end{knot}
		
		\fill (2.05,-1.05) circle [radius = 4pt];
		\fill (-1.35,2.4) circle [radius = 4pt];
		\draw (2.5,3) node[scale=1.5] {$\cup h^3$};
	\end{scope}
	
    \end{tikzpicture}
	}
	\caption{The left shows $\nu(\gamma_i)\cong S^1\times D^3$, with $\nu(T_i)$ given by the Borromean sublink. The $(T_i,\tau_i,\gamma_i,0)$ surgery permutes the link components as shown on the right. The dashed meridian is \emph{not} part of the handlebody, but rather conveys where the boundary is. Since the surgery fixes boundary, the handle it links does not cancel. Standard Kirby moves identify the new total space as $S^2\times D^2\# S^1\times S^3$.}
	\label{fig:t2_surg}
\end{figure}
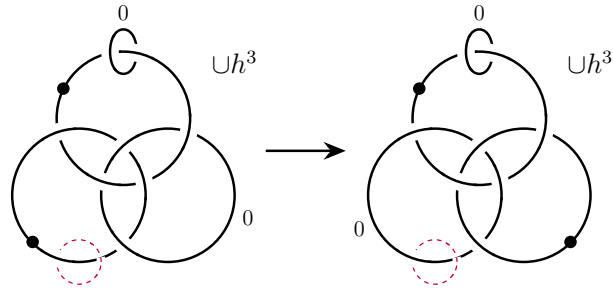

After performing this surgery along the link of tori $L \subset X$, we obtain $Y\# n(S^1\times S^3)\# Z$, where $Y$ is nonorientable with $\pi_1(Y)$ generated by (image of) the curve $\tilde{\gamma}$, and $Z$ is simply connected. By \cite[Theorem A.2]{Iwase} (or \cite[Corollary 14]{Baykur_Sunukjian_2012}), $Z$ can be obtained from a connect sum of copies of $\mathbb{C}\mathbb{P}^2$, $\overline{\mathbb{C}\mathbb{P}^2}$, and $S^1\times S^3$ by a surgery along a link of tori. If needed, by performing an additional torus surgery in $D^4\subset Z$ as in Lemma~\ref{lem:Larson}, we can ensure that there is  at least one connect summand of $\mathbb{C}\mathbb{P}^2\#\overline{\mathbb{C}\mathbb{P}^2}$ in $Z$. Therefore, $Y\# n(S^1\times S^3)\# Z$ yields $Y\# m_1(S^1\times S^3)\# m_2\mathbb{C}\mathbb{P}^2\#m_3\overline{\mathbb{C}\mathbb{P}^2}$ through additional torus surgeries, with $m_2+m_3>0$. Since $Y$ is nonorientable, we simply write the latter as $Y\# m_1(S^1\times S^3)\# (m_2+m_3)\mathbb{C}\mathbb{P}^2$.

Next we perform torus surgery on the nonorientable $4$--manifold $Y$. Let $c$ be a curve in $Y$ which represents $\tilde{\gamma}^2$ in $\pi_1(Y)$. Note that $\nu(c)\cong_{\tau'} S^1\times D^3$, the untwisted $D^3$-bundle over $S^1$. We can thus perform the same type of $0$--surgery as before, along an unknotted torus $T \subset S^1\times D^3$ with framing $\tau'$ and surgery curve $c \subset T$ to obtain a new manifold $Y'$. By Seifert--Van Kampen, we now have $\pi_1(Y') \cong \Z_2$.

We conclude that a  surgery along a link of tori in $X$ yields a nonorientable  $4$--manifold $Y' \# (m_2+m_3)\mathbb{C}\mathbb{P}^2\# m_1(S^1\times S^3)$, where $Y$ is nonorientable with order two $\pi_1$ and $m_2+m_3\geq 1$. By the topological classification of
nonorientable 4-manifolds with $\pi_1=\Z_2$ \cite[Theorem 3]{HKT}, there is a handful of possible homeomorphism types for $Y'\#(m_2+m_3)\mathbb{C}\mathbb{P}^2$. The smooth manifold we have in hand has vanishing Kirby-Siebenmann invariant and $\omega_2$--type (i)---since the universal double cover of $Y' \# (m_2+m_3)\mathbb{C}\mathbb{P}^2$ has odd intersection form.  With this, we deduce that $Y'\#(m_2+m_3)\mathbb{C}\mathbb{P}^2$ is homeomorphic to either $S^2\times \mathbb{R}\mathbb{P}^2\#k \mathbb{C}\mathbb{P}^2$ or $\mathbb{R}\mathbb{P}^4\# k \mathbb{C}\mathbb{P}^2$ for some $k\geq m_2+m_3$. 
As Gompf showed in \cite{GOMPF1984}, this statement can be upgraded to a diffeomorphism for sufficiently large $k$.  By performing either of the two surgeries in Lemma~\ref{lem:Larson} on a collection of disjoint $4$-balls in $Y'$, we can add arbitrarily many copies of $S^1\times S^3\#\mathbb{C}\mathbb{P}^2\#\overline{\mathbb{C}\mathbb{P}^2}$ to our connected sum. This eventually gives a $4$-manifold diffeomorphic to a connected sum of $S^2\times \mathbb{R}\mathbb{P}^2$ or $\mathbb{R}\mathbb{P}^4$ with several copies of $\CP$ and $S^1 \x S^3$.

All torus surgeries we have performed so far can be assumed to occur along disjoint tori in the original $X$; the surgeries that change $\pi_1(X)$ are disjoint by assumption, the surgeries that transform $Z$ into copies of $S^1\times S^3$, $\mathbb{C}\mathbb{P}^2$, and $\overline{\mathbb{C}\mathbb{P}^2}$ occur along disjoint tori away from $\{\tilde \gamma, \gamma_1,\ldots, \gamma_n\}$ \cite[Corollary 14]{Baykur_Sunukjian_2012}, and the surgeries from Lemma~\ref{lem:Larson} occur in other $4$-balls. Therefore we arrive at a $4$--manifold 
$S^2\times \mathbb{R}\mathbb{P}^2\#a \mathbb{C}\mathbb{P}^2\# b(S^1\times S^3)$ or $\mathbb{R}\mathbb{P}^4\# a \mathbb{C}\mathbb{P}^2\#b(S^1\times S^3)$, depending on the unoriented cobordism type of the nonorientable $4$--manifold $X$, through surgery along a link of disjoint tori. Since torus surgery is reversible, the statement follows.	
\end{proof}

In dimension three, it is a theorem of Lickorish that every closed $3$--manifold can be obtained by a Dehn surgery along $a \, S^1 \twprod S^2 \# b \mathbb{RP}^2 \x S^1$, for some $a, b \in \Z_{\geq 0}$ with $a+b \leq 1$, where one can take $a=b=0$ (and the connected sum is understood to be just $S^3$) if $Y$ is orientable \cite{lickorish:nonorientable}. We get the following four dimensional analog of this result:

\begin{corollary} \label{cor:4Dlickorish-wallace}
Any closed, connected, smooth $4$-manifold $X$ is obtained from a universal $4$-manifold\[
Z(a,b,c,d,e)=a(S^2\times \mathbb{R}\mathbb{P}^2)\#b\mathbb{R}\mathbb{P}^4\# c(S^1\times S^3)\# d\mathbb{C}\mathbb{P}^2\# e\overline{\mathbb{C}\mathbb{P}^2}
\] through surgery along a link of tori in $Z(a,b,c,d,e)$, where\begin{enumerate}[(i)]
	\item If $X$ is oriented, $a=b=0$.
	\item If $X$ is nonorientable, $a+b=1$  and we can take $e=0$.
\end{enumerate}
\end{corollary}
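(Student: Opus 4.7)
The plan is to deduce both cases from two established inputs---Iwase's theorem \cite{Iwase} in the orientable case and Theorem~\ref{thm:surgery} above in the nonorientable case---together with the elementary observation that torus surgery is reversible. Indeed, if $X' = X(T, \tau, \lambda, p/q)$, then applying the inverse of the gluing diffeomorphism across the newly attached torus $T' = \phi(\{pt\} \times T^2) \subset X'$ recovers $X$, so being related by a surgery along a link of tori is a symmetric relation on diffeomorphism types of closed $4$--manifolds.

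For part (i), I would invoke Iwase's theorem directly: every closed, oriented $4$--manifold is obtained by a torus-link surgery from some $\#_c (S^1 \times S^3) \, \# \, d \, \mathbb{CP}^2 \, \# \, e \, \overline{\mathbb{CP}^2} = Z(0,0,c,d,e)$. Since torus surgery preserves orientability (one removes and reglues a neighborhood $T^2 \times D^2$), the nonorientable summands $S^2 \times \mathbb{RP}^2$ and $\mathbb{RP}^4$ cannot be used when $X$ is orientable, so $a = b = 0$ is forced, and the cited theorem supplies the remaining data.

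For part (ii), I would apply Theorem~\ref{thm:surgery} essentially verbatim: it presents a nonorientable $X$ as a surgery along a link of tori in
\[
Z = a(S^2 \times \mathbb{RP}^2) \, \# \, b \, \mathbb{RP}^4 \, \# \, c \, \mathbb{CP}^2 \, \# \, d(S^1 \times S^3),
\]
with $a+b=1$ as already remarked after the statement of that theorem. Because this presentation involves only $\mathbb{CP}^2$ summands and no $\overline{\mathbb{CP}^2}$ summands, the conclusion $e=0$ is automatic. A conceptual sanity check is the standard fact that for any nonorientable $4$--manifold $Y$ one has $Y \# \mathbb{CP}^2 \cong Y \# \overline{\mathbb{CP}^2}$, obtained by sliding the summand along an orientation-reversing loop in $Y$, so $\overline{\mathbb{CP}^2}$ summands are genuinely never needed in the nonorientable setting. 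I do not anticipate any serious obstacle here, since the two cited theorems do essentially all of the work; the only new ingredient is the trivial reversibility of torus surgery, used to pass between ``$X$ is obtained from $Z$'' and the reverse direction.
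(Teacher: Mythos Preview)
Your proposal is correct and matches the paper's approach: the corollary is stated without a separate proof, as it follows immediately from Iwase's theorem in the oriented case and Theorem~\ref{thm:surgery} in the nonorientable case. One small note: the reversibility of torus surgery is already invoked at the end of the proof of Theorem~\ref{thm:surgery} itself, so it is not a new ingredient here---you need only cite the two results directly.
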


By \cite{Baykur_Sunukjian_2012}, this translates to the statement that there exists a cobordism $W$ from any given $X$ to a standard $Z(a,b,c,d,e)$ which is made out of $5$-dimensional round two handles. 

As in the orientable case \cite{Baykur_Sunukjian_2012}, a topic of interest is the minimum number of torus surgeries (one is enough?) needed to pass between two homeomorphic but not diffeomorphic nonorientable $4$--manifolds (cf. \cite{bais-torres}).

\vspace{0.3in}
\bibliography{refs-new.bib}

\begin{thebibliography}{10}

\bibitem{Akbulut1984}
Selman Akbulut.
\newblock A fake {$4$}-manifold.
\newblock In {\em Four-manifold theory ({D}urham, {N}.{H}., 1982)}, volume~35
  of {\em Contemp. Math.}, pages 75--141. Amer. Math. Soc., Providence, RI,
  1984.

\bibitem{ADK}
Denis Auroux, Simon~K. Donaldson, and Ludmil Katzarkov.
\newblock Singular {L}efschetz pencils.
\newblock {\em Geom. Topol.}, 9:1043--1114, 2005.

\bibitem{bais-torres}
Valentina Bais and Rafael Torres.
\newblock Smooth structures on nonorientable 4-manifolds via twisting
  operations.
\newblock {\em Bull. Lond. Math. Soc.}, 57(6):1768--1790, 2025.

\bibitem{Baykur:IMRN}
R.~\.{I}nan\c{c} Baykur.
\newblock Existence of broken {L}efschetz fibrations.
\newblock {\em Int. Math. Res. Not. IMRN}, pages Art. ID rnn 101, 15, 2008.

\bibitem{baykurPJM}
R.~\.{I}nan\c{c} Baykur.
\newblock Topology of broken lefschetz fibrations and near-symplectic
  4-manifolds.
\newblock {\em Pacific Journal of Mathematics}, 240(2), 2009.

\bibitem{Baykur-Hayano:BLFandMCG}
R.~\.{I}nan\c{c} Baykur and Kenta Hayano.
\newblock Broken {L}efschetz fibrations and mapping class groups.
\newblock In {\em Interactions between low-dimensional topology and mapping
  class groups}, volume~19 of {\em Geom. Topol. Monogr.}, pages 269--290. Geom.
  Topol. Publ., Coventry, 2015.

\bibitem{Baykur_Kamada}
R.~\.{I}nan\c{c} Baykur and Seiichi Kamada.
\newblock Classification of broken lefschetz fibrations with small fiber
  genera, 2010.

\bibitem{Baykur-Saeki:PNAS}
R.~\.{I}nan\c{c} Baykur and Osamu Saeki.
\newblock Simplified broken {L}efschetz fibrations and trisections of
  4-manifolds.
\newblock {\em Proc. Natl. Acad. Sci. USA}, 115(43):10894--10900, 2018.

\bibitem{Baykur-Saeki:TAMS}
R.~\.{I}nan\c{c} Baykur and Osamu Saeki.
\newblock Simplifying indefinite fibrations on 4-manifolds.
\newblock {\em Trans. Amer. Math. Soc.}, 376(5):3011--3062, 2023.

\bibitem{Baykur_Sunukjian_2012}
R.~İnanç Baykur and Nathan Sunukjian.
\newblock Round handles, logarithmic transforms and smooth 4-manifolds.
\newblock {\em Journal of Topology}, 6(1):49–63, November 2012.

\bibitem{cesardesa:thesis}
E.~C\'esar de~S\'a.
\newblock {\em Automorphisms of 3-manifolds and representations of
  4-manifolds}.
\newblock Ph.d. thesis, University of Warwick, 1977.

\bibitem{Earle_Eells}
Clifford~J. Earle and James Eells.
\newblock {A fibre bundle description of Teichmüller theory}.
\newblock {\em Journal of Differential Geometry}, 3(1-2):19 -- 43, 1969.

\bibitem{Farb-Margalit}
Benson Farb and Dan Margalit.
\newblock {\em A primer on mapping class groups}, volume~49 of {\em Princeton
  Mathematical Series}.
\newblock Princeton University Press, Princeton, NJ, 2012.

\bibitem{fintushel_stern_blowdown}
Ronald Fintushel and Ronald~J. Stern.
\newblock {Rational blowdowns of smooth 4-manifolds}.
\newblock {\em Journal of Differential Geometry}, 46(2):181 -- 235, 1997.

\bibitem{teichner-etal}
Michael Freedman, Daniel Kasprowski, Matthias Kreck, Alan~W. Reid, and Peter
  Teichner.
\newblock Immersions of punctured 4-manifolds: with applications to quantum
  cellular automata.
\newblock \url{https://arxiv.org/abs/2208.03064}.

\bibitem{Gay-Kirby:trisection}
David Gay and Robion Kirby.
\newblock Trisecting 4-manifolds.
\newblock {\em Geom. Topol.}, 20(6):3097--3132, 2016.

\bibitem{Gay-Kirby:BLF}
David~T. Gay and Robion Kirby.
\newblock Constructing {L}efschetz-type fibrations on four-manifolds.
\newblock {\em Geom. Topol.}, 11:2075--2115, 2007.

\bibitem{GOMPF1984}
Robert~E. Gompf.
\newblock Stable diffeomorphism of compact 4-manifolds.
\newblock {\em Topology and its Applications}, 18(2):115--120, 1984.

\bibitem{Gompf-Stipsicz}
Robert~E. Gompf and Andr\'{a}s~I. Stipsicz.
\newblock {\em {$4$}-manifolds and {K}irby calculus}, volume~20 of {\em
  Graduate Studies in Mathematics}.
\newblock American Mathematical Society, Providence, RI, 1999.

\bibitem{HKT}
Ian Hambleton, Matthias Kreck, and Peter Teichner.
\newblock Nonorientable 4-manifolds with fundamental group of order 2.
\newblock {\em Transactions of the American Mathematical Society},
  344(2):649--665, 1994.

\bibitem{hayano:genus1}
Kenta Hayano.
\newblock On genus-1 simplified broken {L}efschetz fibrations.
\newblock {\em Algebr. Geom. Topol.}, 11(3):1267--1322, 2011.

\bibitem{hayano:genus1-II}
Kenta Hayano.
\newblock Complete classification of genus-1 simplified broken {L}efschetz
  fibrations.
\newblock {\em Hiroshima Math. J.}, 44(2):223--234, 2014.

\bibitem{Hayano_R2}
Kenta Hayano.
\newblock Modification rule of monodromies in an {R2}-move.
\newblock {\em Algebraic \& Geometric Topology}, 14(4):2181--2222, 2014.

\bibitem{Hayano:STS}
Kenta Hayano.
\newblock On diagrams of simplified trisections and mapping class groups.
\newblock {\em Osaka J. Math.}, 57(1):17--37, 2020.

\bibitem{Hillman}
Jonathan Hillman.
\newblock {\em Four-manifolds, Geometries and Knots}, volume~5.
\newblock Geometry and Topology Publications, 2002.

\bibitem{Iwase}
Zju\~{n}ici Iwase.
\newblock Dehn surgery along a torus {$T^2$}-knot. {II}.
\newblock {\em Japan. J. Math. (N.S.)}, 16(2):171--196, 1990.

\bibitem{LARSON_2016}
Kyle Larson.
\newblock Surgery on tori in the 4–sphere.
\newblock {\em Mathematical Proceedings of the Cambridge Philosophical
  Society}, 164(1):109–124, September 2016.

\bibitem{laudenbach-poenaru}
Fran\c{c}ois Laudenbach and Valentin Po\'{e}naru.
\newblock A note on {$4$}-dimensional handlebodies.
\newblock {\em Bull. Soc. Math. France}, 100:337--344, 1972.

\bibitem{Lekili}
Yank{\i} Lekili.
\newblock Wrinkled fibrations on near-symplectic manifolds.
\newblock {\em Geom. Topol.}, 13(1):277--318, 2009.
\newblock Appendix B by R. \.{I}nan\c{c} Baykur.

\bibitem{lickorish:nonorientable}
W.~B.~R. Lickorish.
\newblock Homeomorphisms of non-orientable two-manifolds.
\newblock {\em Proc. Cambridge Philos. Soc.}, 59:307--317, 1963.

\bibitem{meier}
Jeffrey Meier.
\newblock Trisections and spun four-manifolds.
\newblock {\em Math. Res. Lett.}, 25(5):1497--1524, 2018.

\bibitem{Miller-Naylor}
Maggie Miller and Patrick Naylor.
\newblock Trisections of nonorientable 4-manifolds.
\newblock {\em Michigan Math. J.}, 74(2):403--447, 2024.

\bibitem{Miller-Ozbagci}
Maggie Miller and Burak Ozbagci.
\newblock Lefschetz fibrations on nonorientable 4-manifolds.
\newblock {\em Pacific J. Math.}, 312(1):177--202, 2021.

\bibitem{Onaran-Ozbagci}
Sinem Onaran and Burak Ozbagci.
\newblock Minimal number of singular fibers in a nonorientable {L}efschetz
  fibration.
\newblock {\em Geom. Dedicata}, 216(1):Paper No. 4, 6, 2022.

\bibitem{Pao}
Peter~Sie Pao.
\newblock The topological structure of 4-manifolds with effective torus
  actions. i.
\newblock {\em Transactions of the American Mathematical Society},
  227:279--317, 1977.

\bibitem{Rubinstein-Tillman}
J.~Hyam Rubinstein and Stephan Tillmann.
\newblock Multisections of piecewise linear manifolds.
\newblock {\em Indiana Univ. Math. J.}, 69(6):2209--2239, 2020.

\bibitem{saeki:simplifying-general}
Osamu Saeki.
\newblock Simplifying generic smooth maps to the 2-sphere and to the plane.
\newblock \url{https://arxiv.org/abs/2407.10145}.

\bibitem{SAEKI2019}
Osamu Saeki.
\newblock Elimination of definite fold {II}.
\newblock {\em Kyushu Journal of Mathematics}, 73(2):239--250, 2019.

\bibitem{Spreer-Tillman}
Jonathan Spreer and Stephan Tillmann.
\newblock Determining the trisection genus of orientable and non-orientable
  {PL} 4-manifolds through triangulations.
\newblock {\em Exp. Math.}, 31(3):897--907, 2022.

\bibitem{Whitney1955}
Hassler Whitney.
\newblock On singularities of mappings of euclidean spaces. {I.} mappings of
  the plane into the plane.
\newblock {\em Annals of Mathematics}, 62(3):374--410, 1955.

\end{thebibliography}
\bibliographystyle{plain}

\end{document}